\theoremstyle{plain} 
\newtheorem{thm}{Theorem}[section]
\newtheorem{lem}[thm]{Lemma} 
\newtheorem{prop}[thm]{Proposition} 
\newtheorem{cor}[thm]{Corollary} 
\theoremstyle{definition} 
\newtheorem{defn}[thm]{Definition}
\newtheorem{rem}[thm]{Remark}
\def\@tocline#1#2#3#4#5#6#7{\relax
  \ifnum #1>\c@tocdepth 
  \else
    \par \addpenalty\@secpenalty\addvspace{#2}%
    \begingroup \hyphenpenalty\@M
    \@ifempty{#4}{%
      \@tempdima\csname r@tocindent\number#1\endcsname\relax
    }{%
      \@tempdima#4\relax
    }%
    \parindent\z@ \leftskip#3\relax \advance\leftskip\@tempdima\relax
    \rightskip\@pnumwidth plus4em \parfillskip-\@pnumwidth
    #5\leavevmode\hskip-\@tempdima
      \ifcase #1
       \or\or \hskip 1em \or \hskip 2em \else \hskip 3em \fi%
      #6\nobreak\relax
    \dotfill\hbox to\@pnumwidth{\@tocpagenum{#7}}\par
    \nobreak
    \endgroup
  \fi}
\newcommand{\A}{\mathcal{A}}
\newcommand{\T}{\mathcal{T}}
\newcommand{\Q}{\mathbb{Q}}
\newcommand{\ZN}{\mathbb{Z}}
\newcommand{\RN}{\mathbb{R}}
\newcommand{\PB}{\mathbb{P}}
\newcommand{\CO}{\mathcal{O}}
\newcommand{\CN}{\mathbb{C}}
\newcommand{\Ku}{\mathrm{Ku}}
\newcommand{\DC}{\mathrm{D}^{\mathrm{b}}}
\newcommand{\DD}{\mathbb{D}}
\newcommand{\Hom}{\mathrm{Hom}}
\newcommand{\CHom}{\mathcal{H}om}
\newcommand{\RHom}{\mathrm{RHom}}
\newcommand{\CRHom}{R\mathcal{H}om}
\newcommand{\Ext}{\mathrm{Ext}}
\newcommand{\CExt}{\mathcal{E}xt}
\newcommand{\ext}{\mathrm{ext}}
\newcommand{\ch}{\mathrm{ch}}
\newcommand{\Coh}{\mathrm{Coh}}
\newcommand{\Stab}{\mathrm{Stab}}
\newcommand{\CH}{\mathcal{H}}
\numberwithin{equation}{section}
\begin{document}

\title{A moduli space of stable sheaves on a cubic threefold}

\author{Shihao Ma \and Song Yang}
\address{Center for Applied Mathematics and KL-AAGDM, Tianjin University, Weijin Road 92, Tianjin 300072, P. R. China}%
\email{shma@tju.edu.cn, syangmath@tju.edu.cn}%

\begin{abstract}
In this paper, we prove that the moduli space $\overline{M}_{X}(\nu)$ of $H$-Gieseker semistable sheaves on a smooth cubic threefold $X$ with Chern character $\nu=(4,-H,-\frac{5}{6}H^{2},\frac{1}{6}H^{3})$ is non-empty, smooth and irreducible of dimension $8$. 
Moreover, we give a set-theoretic description of the moduli space $\overline{M}_{X}(\nu)$, 
which also yields that $\overline{M}_{X}(\nu)$ is a birational model of the moduli space of smooth quartic rational  curves in $X$.
\end{abstract}

\date{\today}

\subjclass[2020]{Primary  14D20; Secondary 14F08, 14J30, 14J45}
\keywords{Cubic threefold, Stability condition, Moduli space}

\maketitle

\setcounter{tocdepth}{1}
\tableofcontents


\section{Introduction}

Moduli spaces of sheaves on algebraic varieties are very interesting and important topics. 
They had been being extensively studied in the last few decades and there exist many important results concerning moduli spaces of sheaves on surfaces; see \cite{HL10} for an excellent survey.
One of the central problems is as follows:  
{\it classifying the Chern characters of slope or Gieseker semistable sheaves
and determining when there exist stable sheaves with the given Chern character} (cf. \cite[Problem 4.8]{CHN23} for surface case).
It is well-known that the moduli space of Gieseker semistable sheaves on a smooth projective variety is a projective scheme. 
However, the non-emptiness, smoothness or irreducibility of moduli spaces of sheaves on higher dimensional varieties, are in general much harder to study as also mentioned in \cite[\S 5.A]{HL10}.
In fact, moduli spaces of sheaves on threefolds are less well understood; 
especially, for that of sheaves of high rank.
Even in the case of $\mathbb{P}^{3}$ results are scarce; see the recent work \cite{Sch23} for semistable sheaves up to rank $4$ on $\mathbb{P}^{3}$.

The purpose of this paper is to study a moduli space of Gieseker semistable sheaves of rank four on a smooth cubic threefold.
Let $X$ be a smooth cubic threefold in the complex projective four-space and 
$H$ the hyperplane section.
We denote by $\overline{M}_{X}(\nu)$ the moduli space of $H$-Gieseker semistable sheaves
with Chern character 
$$
\nu:=(4,-H,-\frac{5}{6}H^{2}, \frac{1}{6}H^{3}).
$$
It is known that every smooth hyperplane section of a smooth cubic threefold contains a smooth rational curve of degree $4$.
Based on this fact, via a construction of stable vector bundles in \cite{BBF+}, 
we first observe that there exists $H$-Gieseker stable vector bundles with Chern character $\nu$ (see Lemma \ref{non-empty-lem}).
In particular, the moduli space $\overline{M}_{X}(\nu)$ is non-empty.
In recent years, the study of moduli spaces of sheaves have been revolutionized by the notion of stability conditions on triangulated categories introduced by Bridgeland \cite{Bri07,Bri08}; 
see \cite{BM23} for an excellent survey.

The original motivation of this paper is to study the moduli space $\overline{M}_{X}(\nu)$ by relating it to a moduli space of Bridgeland stable objects in the {\it Kuznetsov component} $\Ku(X)$ of $X$. 
Let $\DC(X)$ be the bounded derived category  of coherent sheaves on $X$.
Then there is a semiorthogonal decomposition  
$$
\DC(X)=\langle \Ku(X), \CO_{X},\CO_{X}(H) \rangle.
$$
It was shown in \cite{Kuz04} that the Kuznetsov component $\Ku(X)$ is a fractional Calabi--Yau triangulated category of dimension $\frac{5}{3}$, i.e., the Serre functor $S$ of $\Ku(X)$ satisfying $S^{3}=[5]$.
Bridgeland stability conditions on $\Ku(X)$ have been constructed by Bernardara--Macr{\`{\i}}--Mehrotra--Stellari in \cite{BMMS12}, and recently by Bayer--Lahoz--Macr{\`{\i}}--Stellari in \cite{BLMS23}.
It is of importance to notice that these stability conditions on $\Ku(X)$ are Serre-invariant (see \cite{PY22,FP23,Zha20,JLLZ24} etc.). 
The moduli spaces of Bridgeland semistable objects in $\Ku(X)$ have been extensively investigated in \cite{LMS15,PY22,APR22,BBF+,Qin23,LZ22,FP23,FLZ23,FGLZ24,CHL+24} etc; 
see \cite{PS23} for an excellent survey with focusing on categorical Torelli problems.
The numerical Grothendieck group $\mathcal{N}(\Ku(X))=\ZN[I_{\ell}]\oplus \ZN[S(I_{\ell})]$ is a rank-2 lattice generated by $I_{\ell}$ and $S(I_{\ell})$, where $I_{\ell}$ is the ideal sheaf of a line $\ell\subset X$. 
Let $\sigma$ be a Serre-invariant stability condition on $\Ku(X)$.
For two coprime numbers $a, b\in\ZN$,
we denote by $M_{\sigma}(a[I_{\ell}]+b[S(I_{\ell})])$ the Bridgeland moduli space of $\sigma$-stable objects in $\Ku(X)$
with numerical class $a[I_{\ell}]+b[S(I_{\ell})]$;
the existence follows the work \cite{BLM+}.
Every non-empty moduli space $M_{\sigma}(a[I_{\ell}]+b[S(I_{\ell})])$ has been proved to be smooth by Laura Pertusi and the second named author in \cite{PY22}.
Moreover,  
it follows from \cite{VP21} that a smooth moduli space $M_{\sigma}(a[I_{\ell}]+b[S(I_{\ell})])$ is projective (see also \cite{BM23}).
In their recent work  \cite{LLPZ24}, Li--Lin--Pertusi--Zhao obtain the non-emptiness and irreducibility of the moduli space $M_{\sigma}(a[I_{\ell}]+b[S(I_{\ell})])$. 

Note that  $\ch(I_{\ell})=(1,0,-\frac{1}{3}H^{2},0)$ and $\ch(S(I_{\ell}))=(2,-H,-\frac{1}{6}H^{2},\frac{1}{6}H^{3})$.
Let $\overline{M}_{X}(\ch(a,b))$ be the moduli space of $H$-Gieseker semistable sheaves with Chern character $\ch(a,b):=a\ch(I_{\ell})+b\ch(S(I_{\ell}))$, where $a+2b\geq 1$, $a,b\geq 0$.
Feyzbakhsh--Pertusi \cite{FP23} proved that 
the moduli space $\overline{M}_{X}(\ch(0,1))$ is isomorphic to the moduli space $M_{\sigma}([S(I_{\ell})])$, the moduli space $\overline{M}_{X}(\ch(1,0))$ is isomorphic to the moduli space $M_{\sigma}([I_{\ell}])$; both Gieseker moduli spaces are isomorphic to the Fano variety of lines (see also \cite{PY22}).
Bayer etal. \cite{BBF+} showed that the moduli space $\overline{M}_{X}(\ch(1,1))$ is smooth and irreducible of dimension $4$, 
and it is isomorphic to the moduli space $M_{\sigma}([I_{\ell}]+[S(I_{\ell})])$.
We notice that the Chern character $\nu=2\ch(I_{\ell})+\ch(S(I_{\ell}))$. 
Naturally, we may ask whether these two moduli spaces $\overline{M}_{X}(\nu)$ and  $M_{\sigma}(2[I_{\ell}]+[S(I_{\ell})])$ are isomorphic. 
The first main result of this paper is stated as follows.

\begin{thm}\label{mainthm-iso}
The moduli space $\overline{M}_{X}(\nu)$ is smooth of dimension $8$.
Moreover, for any Serre-invariant stability condition $\sigma$  on $\Ku(X)$,
the moduli space $M_{\sigma}(2[I_{\ell}]+[S(I_{\ell})])$ is isomorphic to the moduli space 
$\overline{M}_{X}(\nu)$.
In particular,  the moduli space $\overline{M}_{X}(\nu)$ is irreducible.
\end{thm}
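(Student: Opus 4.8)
The plan is to realize the isomorphism $M_{\sigma}(2[I_{\ell}]+[S(I_{\ell})]) \cong \overline{M}_{X}(\nu)$ by exhibiting a natural functor between the two moduli problems and checking it is an isomorphism on points and tangent spaces. The natural candidate is the projection functor $\mathrm{pr}\colon \DC(X)\to\Ku(X)$ composed with (a shift of) the inclusion; more precisely, one expects that for a general $H$-Gieseker semistable sheaf $F$ with $\ch(F)=\nu$, the object $\mathrm{pr}(F)$ (or $F$ itself, since $\nu$ may already lie in $\Ku(X)$ after checking the vanishing of $\Hom$ and $\Ext$ against $\CO_{X}$ and $\CO_{X}(H)$) is $\sigma$-stable of class $2[I_{\ell}]+[S(I_{\ell})]$. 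First I would verify, using the weak stability conditions of \cite{BLMS23} as tilt-stability and the large-volume limit behaviour, that any $H$-Gieseker semistable $F$ of character $\nu$ satisfies $\RHom(\CO_{X},F)=\RHom(\CO_{X}(H),F)=0$ up to a controlled shift, so that $F\in\Ku(X)$; this is the standard mechanism (as in \cite{FP23,BBF+}) by which Gieseker moduli on $X$ embed into Bridgeland moduli on $\Ku(X)$. Then I would show, conversely, that every $\sigma$-stable object $E$ of class $2[I_{\ell}]+[S(I_{\ell})]$ is (a shift of) a sheaf: one argues that $E$ is a genuine coherent sheaf by analysing its cohomology sheaves with respect to the heart, ruling out the two-term case via Serre-invariance of $\sigma$ and the numerical constraints, exactly in the spirit of the $\ch(1,1)$ case treated in \cite{BBF+}. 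The upshot is a bijection on closed points.

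Next I would upgrade this pointwise bijection to an isomorphism of moduli spaces. Since $\overline{M}_{X}(\nu)$ is already known (from the first part of the theorem, which I may assume) to be smooth of dimension $8$, and $M_{\sigma}(2[I_{\ell}]+[S(I_{\ell})])$ is smooth by \cite{PY22}, it suffices to produce a morphism in one direction and check it induces isomorphisms on Zariski tangent spaces. The tangent space to $\overline{M}_{X}(\nu)$ at $[F]$ is $\Ext^{1}_{X}(F,F)$, while the tangent space to $M_{\sigma}$ at the corresponding object is $\Ext^{1}_{\Ku(X)}(E,E)$; because $\Ku(X)$ is a full admissible subcategory and $F$ (suitably shifted) lies in it, these Ext-groups agree, so the comparison of tangent spaces is automatic once the identification of objects is established. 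The morphism itself can be constructed either by verifying that the projection functor is exact on the relevant locus and applies to families (a relative version of the $\Hom$-vanishings, which hold in an open neighbourhood), giving a map $\overline{M}_{X}(\nu)\to M_{\sigma}$, or by invoking a general representability/comparison result for moduli of objects in $\Ku(X)$ as in \cite{BLM+}.

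Finally, the irreducibility assertion follows formally: by \cite{LLPZ24}, the moduli space $M_{\sigma}(a[I_{\ell}]+b[S(I_{\ell})])$ is irreducible for all coprime $(a,b)$, in particular for $(a,b)=(2,1)$, and irreducibility transports across the isomorphism to $\overline{M}_{X}(\nu)$. Dimension $8$ is consistent: $\ext^{1}_{\Ku(X)}(E,E)$ can be computed from the Euler form on $\mathcal{N}(\Ku(X))$ together with the fractional Calabi--Yau property $S^{3}=[5]$ and Serre-invariance, giving $1-\chi(E,E)=8$ for a stable (hence simple) object.

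The main obstacle, as I see it, is the first step: showing that \emph{every} $H$-Gieseker semistable sheaf of character $\nu$ — not just the vector bundles produced in Lemma \ref{non-empty-lem} — lies in $\Ku(X)$ and is $\sigma$-stable there, and symmetrically that every $\sigma$-stable object of the given class is an honest sheaf rather than a genuine complex. The rank-$4$, higher-slope situation makes the tilt-stability bookkeeping (bounding the possible destabilizing subobjects in the relevant tilted hearts, and excluding the two-term complexes) substantially heavier than in the rank-$\le 2$ cases already in the literature; controlling the walls and the finitely many numerically allowed Harder--Narasimhan factors is where the real work lies.
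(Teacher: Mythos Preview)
Your outline is correct and matches the paper's approach closely: one first shows that every $H$-Gieseker stable sheaf of character $\nu$ lies in $\Ku(X)$ and is $\sigma$-stable (giving the embedding and, via \cite{PY22}, the smoothness of $\overline{M}_X(\nu)$), and then shows surjectivity by proving every $\sigma$-stable object of class $2[I_\ell]+[S(I_\ell)]$ is, up to shift, a $\mu_H$-stable sheaf. The ``real work'' you anticipate is made precise in the paper as two ingredients: a no-walls lemma (there are no tilt walls for $\ch_{\le 2}=(4,-H,-\tfrac{5}{6}H^2)$ to the left of $\beta=-\tfrac{1}{4}$), and a sharp bound $\ch_3\le \tfrac{1}{6}H^3$ which, combined with the dichotomy of \cite[Proposition~4.1]{FP23}, kills the possible $0$-dimensional torsion quotient and forces the $\sigma$-stable object to be an honest sheaf; note also that smoothness of $\overline{M}_X(\nu)$ is deduced \emph{from} the embedding rather than assumed, so your ``which I may assume'' is logically reversed but harmless.
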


One the other hand, it is also an important problem to give a characterization of stable sheaves with a fixed Chern character. 
However, generally speaking, it is quite hard for giving a detailed geometric description of high-rank stable sheaves with a fixed Chern character; for example, see \cite[Theorem 6.1]{BBF+} for an explicit description of $H$-Gieseker semistable sheaves with Chern character $\ch(I_{\ell})+\ch(S(I_{\ell}))$.
The second main result of of this paper is to give the following characterization of $H$-Gieseker semistable  sheaves with Chern character $\nu=2\ch(I_{\ell})+\ch(S(I_{\ell}))$.

\begin{thm}\label{sh-charact-thm}
A sheaf $E\in \Coh(X)$ with Chern character $\ch(E)=\nu$ is $H$-Gieseker semistable if and only if it is either 
\begin{enumerate}
\item[(1)]  the vector bundle $E_{C}$ given by the left mutation 
$$
E_{C}:=\mathrm{L}_{\CO_{X}} (\DD(I_{C}(H)))[-1],
$$
where $\DD(I_{C}(H)):=\CRHom(I_{C}(H),\CO_{X})[1]$ is the derived dual of the twisted ideal sheaf $I_{C}$ of a smooth quartic rational  curve $C$;

\item[(2)] or the non-reflexive sheaf $E_{\ell}$ given by the short exact sequence 
$$
\xymatrix@C=0.5cm{
0 \ar[r]^{} & E_{\ell} \ar[r]^{} & \mathcal{U}_{X} \ar[r]^{} & \CO_{\ell}(-H)\ar[r]^{} & 0,
}
$$
where $\ell\subset X$ is a line and $\mathcal{U}_{X}:=\Omega_{\PB^{4}}(H)|_{X}$ is the restriction of twisted cotangent bundle of $\PB^{4}$.
\end{enumerate}
\end{thm}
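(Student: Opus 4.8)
The plan is to prove the two implications separately. For the forward (``only if'') direction I will reduce, via Theorem~\ref{mainthm-iso}, to the classification of $\sigma$-stable objects in $\Ku(X)$ of numerical class $2[I_{\ell}]+[S(I_{\ell})]$; the backward (``if'') direction amounts to the stability of the two sample sheaves and the local freeness of $E_{C}$.

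\textbf{The ``if'' direction.} First I would record the numerics: using $K_{X}=-2H$, $H^{3}=3$ and the Euler sequence of $\Omega_{\PB^{4}}$ one computes $\ch(\mathcal{U}_{X})=(4,-H,-\tfrac12 H^{2},-\tfrac16 H^{3})$ and $\ch(\CO_{\ell}(-H))=(0,0,\tfrac13 H^{2},-\tfrac13 H^{3})$, so $\ch(E_{\ell})=\nu$; likewise $\ch(I_{C}(H))=(1,H,-\tfrac56 H^{2},-\tfrac16 H^{3})$ and $\chi(\CO_{X},\DD(I_{C}(H)))=3$, which together with the mutation triangle give $\ch(E_{C})=\nu$. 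For $E_{\ell}$, the bundle $\mathcal{U}_{X}=\Omega_{\PB^{4}}(H)|_{X}$ is $H$-slope stable (restriction of the slope-stable $\Omega_{\PB^{4}}$; this survives restriction to the smooth cubic, e.g.\ by computing the splitting type $\CO_{\ell}(-1)\oplus\CO_{\ell}^{\oplus 3}$ on lines, or by a restriction theorem), and $E_{\ell}\hookrightarrow\mathcal{U}_{X}$ has the same rank and first Chern class, hence the same $H$-slope; since any nonzero subsheaf of $E_{\ell}$ is a subsheaf of $\mathcal{U}_{X}$ it has strictly smaller slope, so $E_{\ell}$ is $H$-Gieseker stable, and it is non-reflexive because $\CO_{\ell}(-H)$ is pure of dimension one, whence $E_{\ell}^{\vee\vee}=\mathcal{U}_{X}\neq E_{\ell}$. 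For $E_{C}$: this is the construction of stable vector bundles recalled in Lemma~\ref{non-empty-lem} (following \cite{BBF+}); local duality applied to the Koszul resolution of the lci curve $C$ and to $0\to I_{C}(H)\to\CO_{X}(H)\to\CO_{C}(H)\to 0$ shows that $\DD(I_{C}(H))$ has cohomology sheaves only in degrees $-1,0$, with $\mathcal{H}^{-1}=\CO_{X}(-H)$ and $\mathcal{H}^{0}=\CExt^{1}(I_{C}(H),\CO_{X})$ a line bundle of degree $2$ on $C$; since $\RHom(\CO_{X},\DD(I_{C}(H)))=\CN^{\oplus 3}$ is the full (globally generated) space of sections of that line bundle, the evaluation is surjective, and the mutation presents $E_{C}$ as a sheaf sitting in $0\to\CO_{X}(-H)\to E_{C}\to K\to 0$ with $K=\ker(\CO_{X}^{\oplus 3}\twoheadrightarrow\CExt^{1}(I_{C}(H),\CO_{X}))$; one then checks that $\CRHom(E_{C},\CO_{X})$ is a sheaf extension of $I_{C}(H)$ by $\CO_{X}^{\oplus 3}$ whose local class, by base-point-freeness of the degree-$2$ bundle on $\PB^{1}$, is locally free, hence $E_{C}=\CRHom(\CRHom(E_{C},\CO_{X}),\CO_{X})$ is locally free. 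Its $H$-Gieseker stability then follows from the stability of the associated tilt-stable/Bridgeland-stable object in $\Ku(X)$.

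\textbf{The ``only if'' direction.} Let $E\in\Coh(X)$ be $H$-Gieseker semistable with $\ch(E)=\nu=2\ch(I_{\ell})+\ch(S(I_{\ell}))$. Since $\mu_{H}(\CO_{X})=0>-\tfrac14=\mu_{H}(E)$ and $\mu_{H}(\omega_{X})<\mu_{H}(E)$, semistability forces $\Hom(\CO_{X},E)=\Ext^{3}(\CO_{X},E)=0$, and similarly for $\CO_{X}(H)$. By Theorem~\ref{mainthm-iso} the (possibly twisted) projection functor $\mathrm{pr}\colon\DC(X)\to\Ku(X)$ underlying the isomorphism $\overline{M}_{X}(\nu)\cong M_{\sigma}(2[I_{\ell}]+[S(I_{\ell})])$ sends $E$ to a $\sigma$-stable object $\widetilde{E}\in\Ku(X)$ of class $2[I_{\ell}]+[S(I_{\ell})]$, and $E\mapsto\widetilde{E}$ is exactly this isomorphism. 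Hence it suffices to (i) classify the $\sigma$-stable objects of class $2[I_{\ell}]+[S(I_{\ell})]$ in $\Ku(X)$, and (ii) identify the preimage sheaf of each of them. For (i) I would follow \cite{LLPZ24,PY22,BBF+}: using the Serre-invariance of $\sigma$, the known descriptions of the moduli of $\sigma$-stable objects of the smaller classes $[I_{\ell}]$, $[S(I_{\ell})]$, $[I_{\ell}]+[S(I_{\ell})]$ (the Fano surface of lines, respectively the $4$-fold $\overline{M}_{X}(\ch(1,1))$), and the Bogomolov--Gieseker-type inequalities on $\Ku(X)$ from \cite{BLMS23}, one shows that, up to shift, $\widetilde{E}$ is the projection of a twisted ideal sheaf $I_{Z}(H)$ for a one-dimensional subscheme $Z\subset X$ with $\ch(\CO_{Z})=(0,0,\tfrac43 H^{2},-H^{3})$, unless $\widetilde{E}$ lies in a distinguished closed locus where it is the projection of $E_{\ell}$ for a line $\ell$. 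For (ii), unwinding $\mathrm{pr}$ as a composition of mutations one verifies that the preimage of $\mathrm{pr}(I_{C}(H))$ is $\mathrm{L}_{\CO_{X}}\DD(I_{C}(H))[-1]=E_{C}$ and that of the distinguished object is $E_{\ell}$. Since $E_{C}$ is locally free while $E_{\ell}$ is not, the two families are disjoint and together exhaust $\overline{M}_{X}(\nu)$, which is the claim (and it simultaneously identifies the dense open locus of $E_{C}$'s with the space of smooth rational quartics, as in the abstract).

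\textbf{The main obstacle.} The crux is the geometric part of (i): proving that the rank-one object obtained by inverting the mutation/projection from an \emph{arbitrary} $\sigma$-stable $\widetilde{E}$ of class $2[I_{\ell}]+[S(I_{\ell})]$ really is the twisted ideal sheaf of a genuinely \emph{smooth, irreducible, rational} quartic curve --- i.e.\ ruling out every degenerate configuration (reducible or non-reduced $Z$, $Z$ with embedded points, or a plane quartic of positive arithmetic genus) while isolating the single boundary configuration producing the non-reflexive sheaf $E_{\ell}$. This requires combining the Harder--Narasimhan and Jordan--H\"older analysis of $\widetilde{E}$ under the Serre-invariant $\sigma$, the classification of stable objects of the smaller numerical classes, and a battery of $\Hom$-vanishings forced by Riemann--Roch; controlling the non-locally-free locus --- equivalently, ruling out reflexive but non-locally-free semistable sheaves with Chern character $\nu$ --- is the most delicate sub-point, where the finer inequalities in $\Ku(X)$ enter. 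A secondary technical point is the local freeness of $E_{C}$ in the ``if'' direction, which is the elementary-modification computation sketched above.
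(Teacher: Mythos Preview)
Your reduction of the ``only if'' direction to Theorem~\ref{mainthm-iso} is circular. The content of Theorem~\ref{mainthm-iso} (see its proof in Section~\ref{Proof-main-result}) is precisely that every $\sigma$-stable object of class $2[I_{\ell}]+[S(I_{\ell})]$ in $\Ku(X)$ is, up to shift, a $\mu_{H}$-stable sheaf with Chern character $\nu$; the isomorphism $\overline{M}_{X}(\nu)\cong M_{\sigma}(2[I_{\ell}]+[S(I_{\ell})])$ is set-theoretically the identity. Hence your step~(i), ``classify the $\sigma$-stable objects of class $2[I_{\ell}]+[S(I_{\ell})]$'', is exactly the classification of $H$-Gieseker semistable sheaves with $\ch=\nu$ that you are trying to prove. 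The references you cite (\cite{LLPZ24,PY22,BBF+}) contain non-emptiness, irreducibility, and smoothness results for this moduli space but \emph{not} an explicit pointwise description of the objects; \cite[Example~8.8]{LLPZ24} is a birationality statement, not a classification. Also, the phrase ``$\widetilde{E}$ is the projection of a twisted ideal sheaf $I_{Z}(H)$'' does not match the actual formula $E_{C}=\mathrm{L}_{\CO_{X}}(\DD(I_{C}(H)))[-1]$, which involves the derived dual.

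The paper's argument is entirely different and never works inside $\Ku(X)$. For the ``only if'' direction it first separates off the non-reflexive case by showing (Proposition~\ref{may-not-reflexive}, using Lemma~\ref{charact-UX}) that if $E$ is not reflexive then $E^{\vee\vee}\cong\mathcal{U}_{X}$ and the cokernel is forced to be $\CO_{\ell}(-H)$. For reflexive $E$, the key device is tilt-stability wall-crossing for $\tilde{E}:=E[1]$ on the right of the vertical wall: one shows $W(\tfrac{5}{6},\tfrac{5}{6})$ is the greatest wall for $\tilde{E}$, computes $r:=\hom(\tilde{E},\CO_{X}[1])\in\{3,4\}$ via Hirzebruch--Riemann--Roch and a radius bound from \cite{BBF+}, and analyses the destabilizing sequence $0\to G\to\tilde{E}\to\CO_{X}^{\oplus r}[1]\to 0$ along that wall. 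When $r=4$ the object $G$ is a torsion sheaf with $\ch(G)=(0,H,\tfrac{5}{6}H^{2},-\tfrac{1}{6}H^{3})$, and the entire Section~\ref{Charact-tor-sh} (Theorem~\ref{Classification-torison-sh}, Corollary~\ref{tor-sh-char-cor}, Proposition~\ref{construt-bundle-converse}) is devoted to forcing $G\cong\CO_{Y}(D)$ and extracting a degenerate smooth rational quartic $C$. When $r=3$ one shows $\DD(G)\cong I_{C}(H)$, and the smoothness/irreducibility of $C$ is pinned down by: $\CExt^{3}(\CH^{0}(G),\CO_{X})=0$ (so $C$ is reduced of pure dimension one), the exact sequence \eqref{cohomsh-ses-thm1.2} (excluding line components because $\CExt^{2}(\CO_{\ell}(H),\CO_{X})=\CO_{\ell}(-H)$ has no global sections), and a Chern-character count excluding a union of two conics. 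None of this is accounted for by the Harder--Narasimhan/Jordan--H\"older scheme you sketch. The ``if'' direction in the paper (Proposition~\ref{more-vector-bundle}) also goes via tilt-stability of $E_{C}[1]$ along $W(\tfrac{5}{6},\tfrac{5}{6})$ and \cite[Proposition~4.9]{BBF+}, rather than the direct dualization you propose; your surjectivity-of-evaluation argument is correct as far as it goes, but your claim that stability of $E_{C}$ ``follows from the stability of the associated tilt-stable/Bridgeland-stable object in $\Ku(X)$'' again presupposes what has to be shown.
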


This theorem gives a set-theoretic description of the moduli space $\overline{M}_{X}(\nu)$.
In \cite[Theorem 7.3.]{HRS05}, Harris--Roth--Starr proved that the moduli space $\CH^{4,0}(X)$ of smooth quartic rational  curves is irreducible of dimension $8$. 
By \cite[Example 8.8]{LLPZ24}, the moduli space $M_{\sigma}([I_{\ell}]+2[S(I_{\ell})])$ is a birational model of $\CH^{4,0}(X)$.
Our Theorem \ref{sh-charact-thm} also yields that $\overline{M}_{X}(\nu)$ is a birational model of $\CH^{4,0}(X)$.

The structure of this paper is organized as follows.
In Section \ref{Prelim}, 
we collect some basic facts on (weak) Bridgeland stability conditions, tilt stability, wall-crossing and stability conditions on Kuznetsov components of smooth cubic threefolds.
In Section \ref{non-emptiness-Modsp},  
we obtain three non-empty moduli spaces of Gieseker semistable sheaves with Chern characters $\nu=2\ch(I_{\ell})+\ch(S(I_{\ell}))$, $3\ch(I_{\ell})+\ch(S(I_{\ell}))$ and $4\ch(I_{\ell})+\ch(S(I_{\ell}))$.
In Section \ref{Proof-main-result}, 
we prove Theorem \ref{mainthm-iso}, 
by using Bridgeland stability conditions and wall-crossing techniques.
In Section \ref{Charact-tor-sh}, we give a characterization of torsion sheaves with Chern character $(0,H,\frac{5}{6}H^{2},-\frac{1}{6}H^{3})$.
In Section \ref{Proof-main-2result}, 
we prove Theorem \ref{sh-charact-thm} using wall-crossing techniques in tilt stability.

 \subsection*{Acknowledgements}
We would like to thank Laura Pertusi for sending their preprint \cite{LLPZ24}, 
Zhiyu Liu for useful comments and Shizhuo Zhang for helpful conversations and comments.
This work is partially supported by the National Natural Science Foundation of China (No. 12171351).


\section{Preliminaries}\label{Prelim}

This section will review the definitions and basic facts of (weak) Bridgeland stability conditions, tilt stability, wall-crossing, and stability conditions on the Kuznetsov component of a smooth cubic threefold. 
Our main references are \cite{BMS16,BLMS23}.

\subsection{(Weak) stability conditions}

Let $\mathcal{D}$ be a $\CN$-linear triangulated category and $K(\mathcal{D})$ the Grothendieck group of $\mathcal{D}$. 
Fix a lattice $\Lambda$ of finite rank and a surjective morphism $v:K(\mathcal{D})\rightarrow \Lambda$.
Let  $\A$ be the heart of a bounded $t$-structure on $\mathcal{D}$.

\begin{defn}
A {\it (weak) stability function} on $\A$ is a group homomorphism (called a {\it central charge}) $Z: \Lambda \rightarrow \CN$ such that for every non-zero $E\in \A$, 
$\Im Z(v(E))\geq 0, \; \textrm{and}\; \Im Z(v(E))= 0 \Rightarrow \Re Z(v(E))\, (\leq ) < 0$.
\end{defn}

For simplicity, for an object $E\in \A$, we write $Z(E):=Z(v(E))$.
A (weak) stability function $\sigma:=(\A, Z)$ defines a notion of {\it $\mu_{\sigma}$-slope}:
for $E\in \A$,  
$$
\mu_{\sigma}(E)
:=   
\begin{cases}
-\frac{\Re Z(E)}{\Im Z(E)} & \textrm{if}\, \Im Z(E)>0; \\
+\infty &  \textrm{otherwise.}
\end{cases}
$$
A non-trivial object $E\in \mathcal{D}$ is called {\it $\sigma$-(semi)stable} if some shift $E[k]\in \A$ and for every proper subobject $F\subset E[k]$ in $\A$, 
we have $\mu_{\sigma}(F) \, (\leq) < \mu_{\sigma}(E[k]/F)$.

\begin{defn}
A {\it (weak) stability condition} on $\mathcal{D}$ (with respect to $\Lambda$) is a pair $\sigma=(\A, Z)$ consists of the heart $\A$ of a bounded $t$-structure on $\mathcal{D}$ and a (weak) stability function $Z$ on $\A$ such that the following conditions hold:
\begin{enumerate}
\item[(i)] Any object of $\A$ has a Harder--Narasimhan filtration in $\sigma$-semistable ones.

\item[(ii)] There exists a quadratic form $\Delta$ on $\Lambda\otimes \mathbb{R}$ such that $\Delta|_{\ker Z}$ is negative definite, and $\Delta(E)\geq 0$ for all $\sigma$-semistable objects $E\in \A$.
\end{enumerate}
\end{defn}

We denote by $\Stab_{\Lambda}(\mathcal{D})$ the set of stability conditions on the $\CN$-linear triangulated category $\mathcal{D}$.
This set $\Stab_{\Lambda}(\mathcal{D})$, called the stability manifold of $\mathcal{D}$, has the structure of a complex manifold \cite[Theorem 1.2]{Bri07}.
Moreover, there exist two natural group actions on the stability manifold $\Stab_{\Lambda}(\mathcal{D})$:
\begin{itemize}
  \item a right action of the universal covering space $\tilde{\mathrm{GL}}^{+}_{2}(\RN)$ of $\mathrm{GL}^{+}_{2}(\RN)$.
  \item a left action the group of $\CN$-linear exact autoequivalences of $\mathcal{D}$.
\end{itemize}
We refer to \cite[Lemma 8.2]{Bri08} for detailed discussions.


\subsection{Tilt stability}
Given a weak stability condition $\sigma=(\A, Z)$ on a $\CN$-linear triangulated category $\mathcal{D}$.
For every a real number $\mu\in \mathbb{R}$, 
there is a torsion pair $(\T_{\sigma}^{\mu}, \mathcal{F}_{\sigma}^{\mu})$ defined as follows:
$$
\T_{\sigma}^{\mu}
:=   \langle 
E \in \A \mid E \,\textrm{is}\, \sigma\textrm{-semistable with}\, \mu_{\sigma}(E)>\mu 
\rangle,
$$
$$
\mathcal{F}_{\sigma}^{\mu}
:=   \langle 
E \in \A \mid E \,\textrm{is}\, \sigma\textrm{-semistable with}\, \mu_{\sigma}(E)\leq\mu
\rangle.
$$
We denote by 
$\A^{\mu}_{\sigma}:=  \langle  \T_{\sigma}^{\mu}, \mathcal{F}_{\sigma}^{\mu}[1]\rangle$ the extension closure of $\T_{\sigma}^{\mu}$ and $\mathcal{F}_{\sigma}^{\mu}$.
Then $\A^{\mu}_{\sigma}$ is the heart of a bounded $t$-structure on $\mathcal{D}$ (see \cite{HRS96}).
In particular, $\A_{\sigma}^{\mu}$ is called the heart obtained by {\it tilting} $\A$ with respect to $\sigma$ at the slope $\mu$.

From now on, let $X$ be a smooth cubic threefold and $H$ the hyperplane section.
For every integer $j\in \{0,1, 2, 3\}$, 
we consider the lattice $\Lambda_{H}^{j}\cong \ZN^{j+1}$ generated by 
$$
(H^{3}\ch_{0}, H^{2}\ch_{1}, \cdots, H^{3-j}\ch_{j})\in \Q^{j+1}
$$
with the map $\nu_{H}^{j}: K(\DC(X))\rightarrow \Lambda_{H}^{j}$.
Then, the pair $\sigma_{H}:= (\Coh(X), Z_{H})$ with 
$
Z_{H}(E):=  -H^{2} \cdot \ch_{1}(E)+ i H^{3} \cdot \ch_{0}(E),
$
is a weak stability condition on $\DC(X)$ with respect to the lattice $\Lambda^{1}_{H}$, which is known as {\it $\mu_{H}$-slope stability}.

Now fixing a real number $\beta\in \RN$,
we denote by $\Coh^{\beta}(X)$ the heart of a bounded $t$-structure on $\DC(X)$ obtained by tilting the $\mu_{H}$-slope stability at the slope $\mu_{H}=\beta$.
For $E\in \DC(X)$, the twisted Chern character is given by $\ch^{\beta}(E):=e^{-\beta H}\cdot \ch(E)$.
More explicitly, the first three formulae are: 
\begin{eqnarray*}
\ch_{0}^{\beta}(E) & = & \ch_{0}(E) \\
\ch_{1}^{\beta}(E) & = & \ch_{1}(E)-\beta H\cdot \ch_{0}(E) \\
\ch_{2}^{\beta}(E) & = & \ch_{2}(E)-\beta H\cdot \ch_{1}(E)+\frac{\beta^{2}}{2}H^{2} \cdot \ch_{0}(E). 
\end{eqnarray*}

\begin{prop}[{\cite[Proposition 2.12]{BLMS23}}]
There is a continuous family weak stability conditions $\sigma_{\alpha, \beta}:= (\Coh^{\beta}(X), Z_{\alpha, \beta})$ (with respect to $\Lambda_{H}^{2}$) with the quadratic form given by $\Delta_{H}$ and the central charge
$$
Z_{\alpha, \beta}(E)
:=   \frac{1}{2}\alpha^2 H^{3}\cdot \ch_{0}^{\beta}(E)-H \cdot \ch_{2}^{\beta}(E)
+ i H^{2}\cdot \ch_{1}^{\beta}(E), 
$$
where $(\alpha,\beta)\in \RN_{>0}\times \RN$.
\end{prop}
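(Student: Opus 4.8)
This is an instance of the threefold tilt-stability construction (following \cite{BMS16,BLMS23}); the plan is to verify, for each fixed $(\alpha,\beta)\in\RN_{>0}\times\RN$, the three conditions making $\sigma_{\alpha,\beta}=(\Coh^{\beta}(X),Z_{\alpha,\beta})$ a weak stability condition with respect to $\Lambda^{2}_{H}$, and then to observe that the resulting family is continuous. Throughout I abbreviate $\ch_{i}^{H}(E):=H^{3-i}\cdot\ch_{i}(E)$, so that $\Lambda^{2}_{H}\otimes\RN\cong\RN^{3}$ with coordinates $(\ch_{0}^{H},\ch_{1}^{H},\ch_{2}^{H})$, and I take as quadratic form the discriminant $\Delta_{H}:=(\ch_{1}^{H})^{2}-2\,\ch_{0}^{H}\,\ch_{2}^{H}$.

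First I would check that $Z_{\alpha,\beta}$ is a weak stability function on $\Coh^{\beta}(X)=\langle\mathcal{F}^{\beta}[1],\T^{\beta}\rangle$, where $(\T^{\beta},\mathcal{F}^{\beta})$ is the torsion pair on $\Coh(X)$ cut out by $\mu_{H}$-stability at slope $\beta$. Since $\Im Z_{\alpha,\beta}(E)=H^{2}\cdot\ch_{1}^{\beta}(E)=\ch_{1}^{H}(E)-\beta\,\ch_{0}^{H}(E)$, positivity of the imaginary part on the heart is precisely the pair of slope inequalities $\mu_{H}>\beta$ on $\T^{\beta}$ and $\mu_{H}\leq\beta$ on $\mathcal{F}^{\beta}$, together with effectivity of the support on the torsion subsheaves --- all of which are built into the tilt. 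For the boundary case $\Im Z_{\alpha,\beta}(E)=0$, I would invoke the standard description of such an $E$ as an extension of a sheaf supported in dimension $\leq1$ by the shift $G[1]$ of a $\mu_{H}$-semistable sheaf $G$ of slope exactly $\beta$; then effectivity of the support together with the classical Bogomolov--Gieseker inequality $(\ch_{1}^{H})^{2}\geq2\,\ch_{0}^{H}\,\ch_{2}^{H}$ for $\mu_{H}$-semistable sheaves on $X$ forces $\Re Z_{\alpha,\beta}(E)\leq0$, with equality only when $E$ is supported in dimension $0$ --- which is exactly why $\sigma_{\alpha,\beta}$ is a \emph{weak}, rather than a genuine, stability condition.

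Next I would establish the Harder--Narasimhan property by appealing to the general existence criterion of \cite{BMS16,BLMS23}, whose hypotheses hold here: $\Coh^{\beta}(X)$ is Noetherian (being a tilt of the Noetherian heart $\Coh(X)$ along the torsion pair coming from $\mu_{H}$-stability), and the image of $\Im Z_{\alpha,\beta}$ on the numerical classes of $\Coh^{\beta}(X)$ is discrete. I would also dispose of the easy half of the support property by a direct computation: the equation $Z_{\alpha,\beta}(v)=0$ forces $v\in\RN^{3}$ to satisfy $\ch_{1}^{H}=\beta\,\ch_{0}^{H}$ and $\ch_{2}^{H}=\tfrac12(\alpha^{2}+\beta^{2})\,\ch_{0}^{H}$, whence $\Delta_{H}(v)=-\alpha^{2}(\ch_{0}^{H})^{2}$, which is negative unless $v=0$; thus $\Delta_{H}|_{\ker Z_{\alpha,\beta}}$ is negative definite.

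The main obstacle is the remaining half of the support property, namely the inequality $\Delta_{H}(E)\geq0$ for every $\sigma_{\alpha,\beta}$-semistable object $E$ --- the Bogomolov--Gieseker inequality in tilt stability. Here the plan is a wall-crossing argument: the walls for a fixed numerical class are locally finite in $\RN_{>0}\times\RN$, and $\sigma_{\alpha',\beta'}$-semistability is preserved inside a chamber, so by deforming the parameters and crossing these finitely many walls --- at each of which $\Delta_{H}$ is superadditive over the destabilizing factors, since their central charges become aligned --- one reduces to the case where $E$ is, up to shift, either a $\mu_{H}$-semistable sheaf or a sheaf supported in low dimension, where $\Delta_{H}(E)\geq0$ is the classical Bogomolov--Gieseker inequality (respectively, effectivity of the support) on $X$. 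Once this is in place, continuity of $(\alpha,\beta)\mapsto\sigma_{\alpha,\beta}$ is immediate, since $Z_{\alpha,\beta}$ depends polynomially on $(\alpha,\beta)$ and the hearts $\Coh^{\beta}(X)$ vary in the familiar controlled manner as $\beta$ moves.
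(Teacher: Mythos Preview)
The paper does not supply its own proof of this proposition: it is stated with a citation to \cite[Proposition 2.12]{BLMS23} and then used as a black box. Your sketch follows the standard route taken in \cite{BMS16,BLMS23,BMT14} and is essentially correct as an outline of that argument.

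Two points deserve more care. First, the claim that $\Coh^{\beta}(X)$ is Noetherian ``being a tilt of the Noetherian heart $\Coh(X)$'' is not automatic: tilts of Noetherian hearts need not be Noetherian in general. In this particular case it is true, but it requires the specific argument given in \cite{BMT14} (or the treatment in \cite[\S2]{BMS16}), using that the torsion part $\mathcal{T}^{\beta}$ is itself Noetherian and that $\mathcal{F}^{\beta}$ satisfies suitable finiteness for $\ch_{1}^{\beta}$. Second, and more seriously, your wall-crossing argument for the tilt Bogomolov inequality $\Delta_{H}(E)\geq 0$ risks circularity as written: local finiteness of walls and the invariance of semistability within chambers are typically \emph{consequences} of the support property you are trying to establish. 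The actual proofs in \cite[Corollary 7.3.2]{BMT14} and \cite[Theorem 3.5]{BMS16} avoid this either by restricting to a very general surface in $|mH|$ and invoking the classical surface Bogomolov inequality, or by a direct argument bounding destabilizers via the classical Bogomolov inequality on the cohomology sheaves $\mathcal{H}^{-1}(E)$ and $\mathcal{H}^{0}(E)$ before any wall-and-chamber structure is available. Once $\Delta_{H}\geq 0$ is in hand by one of these methods, the support property and the wall-and-chamber picture follow, not the other way around.
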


\begin{rem}
The weak stability condition $\sigma_{\alpha, \beta}$ is known as {\it tilt stability} (see \cite{BMT14}).
Here the {\it $H$-discriminant} of $E\in \DC(X)$ is given by
\begin{eqnarray*}
\Delta_{H}(E): 
& = & (H^{2}\cdot \ch^{\beta}_{1}(E))^{2}-2(H^{3}\cdot\ch^{\beta}_{0}(E))\cdot (H \cdot \ch^{\beta}_{2}(E)) \\ 
& = & (H^{2}\cdot \ch_{1}(E))^{2}-2(H^{3}\cdot\ch_{0}(E))\cdot (H \cdot \ch_{2}(E)).
\end{eqnarray*}
There is a version of Bogomolov inequality in tilt stability  (see \cite[Corollary 7.3.2]{BMT14} and \cite[Theorem 3.5]{BMS16}):
if $E\in \Coh^{\beta}(X)$ is $\sigma_{\alpha,\beta}$-semistable,
then $\Delta_{H}(E)\geq 0$.
\end{rem}

The following large volume limit plays a significant role in tilt stability (see \cite[Proposition 14.2]{Bri08}, \cite[Proposition 4.8]{BBF+} and \cite[Lemma 2.7]{BMS16}).

\begin{prop}[]\label{limit-tilt-stab-Gie-stab-prop}
Let $E\in \DC(X)$ and $\mu_{H}(E)>\beta$.
Then $E\in \Coh^{\beta}(X)$ and $E$ is $\sigma_{\alpha,\beta}$-(semi)stable for $\alpha\gg 0$
if and only if $E$ is a $2$-$H$-Gieseker (semi)stable sheaf.
In particular, If $\gcd(\ch_{0}(E),\frac{H^{2}\cdot \ch_{1}(E)}{H^{3}})=1$,
then $E\in \Coh^{\beta}(X)$ and $E$ is $\sigma_{\alpha,\beta}$-stable for $\alpha\gg 0$
if and only if $E$ is a $\mu_{H}$-slope stable sheaf.
\end{prop}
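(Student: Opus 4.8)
The proposition is the standard ``large volume limit'' comparison between tilt stability and a truncated (codimension $\leq 2$) form of Gieseker stability, so the plan is to reproduce, in the present setting, the arguments of \cite[Proposition 14.2]{Bri08}, \cite[Proposition 4.8]{BBF+} and \cite[Lemma 2.7]{BMS16}.

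First I would dispose of the easy half of the membership claim: a $2$-$H$-Gieseker semistable sheaf --- in particular a $\mu_{H}$-slope stable one --- is $\mu_{H}$-semistable, so its unique $\mu_{H}$-Harder--Narasimhan factor is $E$ itself with slope $\mu_{H}(E)>\beta$, whence $E\in\T^{\beta}_{\sigma_{H}}\subset\Coh^{\beta}(X)$. For the converse one checks, using the defining inequalities of the tilted heart together with $\mu_{H}(E)>\beta$, that any $E\in\Coh^{\beta}(X)$ which is $\sigma_{\alpha,\beta}$-semistable for $\alpha\gg0$ is forced to be a sheaf. The key structural point is then that, for a sheaf $E$ with $\mu_{H}(E)>\beta$, any short exact sequence $0\to F\to E\to G\to 0$ in $\Coh^{\beta}(X)$ is actually a short exact sequence of sheaves: one has $H^{-1}(F)=0$, so $F=H^{0}(F)\in\T^{\beta}_{\sigma_{H}}$, and $H^{-1}(G)$ embeds into $F$; but $H^{-1}(G)$ has all its $\mu_{H}$-Harder--Narasimhan slopes $\leq\beta$ while every nonzero subsheaf of an object of $\T^{\beta}_{\sigma_{H}}$ has maximal slope $>\beta$, forcing $H^{-1}(G)=0$. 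Thus the whole statement reduces to comparing genuine subsheaves $F\subseteq E$ on the two sides.

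The computational core is to expand the tilt-slope difference in $\alpha$. For a subsheaf $F\subseteq E$ with $H^{2}\cdot\ch_{1}^{\beta}(F)>0$,
\[
\mu_{\sigma_{\alpha,\beta}}(F)-\mu_{\sigma_{\alpha,\beta}}(E)
= -\,\frac{\alpha^{2}H^{3}}{2}\cdot
\frac{\ch_{0}(F)\,H^{2}\cdot\ch_{1}^{\beta}(E)-\ch_{0}(E)\,H^{2}\cdot\ch_{1}^{\beta}(F)}
{\big(H^{2}\cdot\ch_{1}^{\beta}(F)\big)\big(H^{2}\cdot\ch_{1}^{\beta}(E)\big)}+O(1),
\]
so for $\alpha\gg0$ the sign of this difference is governed first by the $\mu_{H}$-slope comparison of $F$ and $E$ and, when those slopes coincide, by the constant term, which after cancelling the $\beta$-contributions reduces to comparing $\tfrac{H\cdot\ch_{2}(F)}{\ch_{0}(F)}$ with $\tfrac{H\cdot\ch_{2}(E)}{\ch_{0}(E)}$, i.e.\ the degree-one coefficients of the reduced Hilbert polynomials of $F$ and $E$. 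If $E$ is $2$-$H$-Gieseker (semi)stable and some $F_{n}$ tilt-destabilizes it along $\alpha_{n}\to\infty$, then, after passing to a subsequence along which $\ch_{0}(F_{n})\in\{0,\dots,\ch_{0}(E)\}$ and $H^{2}\cdot\ch_{1}^{\beta}(F_{n})$ are constant, $\mu_{H}$-semistability of $E$ forces equality of slopes, and the $\ch_{2}$-comparison then shows $F_{n}$ Gieseker-destabilizes $E$, a contradiction (the case $H^{2}\cdot\ch_{1}^{\beta}(F_{n})=0$ being treated separately, see below). Conversely, if $E$ is not $2$-$H$-Gieseker (semi)stable, a $2$-$H$-Gieseker-semistable destabilizing subsheaf $F$ --- necessarily $\mu_{H}$-semistable of slope $\geq\mu_{H}(E)>\beta$, hence lying in $\Coh^{\beta}(X)$ --- tilt-destabilizes $E$ for all $\alpha\gg0$ by the same expansion. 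Finally, when $\gcd(\ch_{0}(E),\tfrac{H^{2}\cdot\ch_{1}(E)}{H^{3}})=1$, a $\mu_{H}$-semistable sheaf of Chern character $\ch(E)$ is automatically $\mu_{H}$-stable and has no proper subsheaf of the same $\mu_{H}$-slope, so for this class $\mu_{H}$-stability, $\mu_{H}$-semistability and $2$-$H$-Gieseker (semi)stability all coincide; this gives the ``in particular''.

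The step I expect to be the main obstacle is making the reduction to finitely many possible destabilizing classes rigorous, and in particular handling the boundary case $H^{2}\cdot\ch_{1}^{\beta}(F_{n})=0$, where $\mu_{\sigma_{\alpha,\beta}}(F_{n})=+\infty$: there one must show such an $F_{n}$ cannot destabilize $E$, since otherwise it would produce inside $E$ either a nonzero torsion subsheaf or a $\mu_{H}$-semistable subsheaf of slope exactly $\beta$, each incompatible with $\mu_{H}(E)>\beta$ together with the $\mu_{H}$-(semi)stability of $E$. The remaining ingredients --- the Bogomolov inequality $\Delta_{H}\geq 0$ for tilt-semistable objects, used to bound the destabilizers, and the bookkeeping of Hilbert-polynomial coefficients --- are routine.
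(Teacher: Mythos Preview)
The paper does not prove this proposition at all: it is stated as a known ``large volume limit'' result, with the proof deferred entirely to the references \cite[Proposition 14.2]{Bri08}, \cite[Proposition 4.8]{BBF+} and \cite[Lemma 2.7]{BMS16}. Your proposal is a correct reconstruction of exactly that standard argument --- the reduction of subobjects in $\Coh^{\beta}(X)$ to honest subsheaves, the $\alpha^{2}$-expansion of $\mu_{\sigma_{\alpha,\beta}}$ picking out first the $\mu_{H}$-slope and then the $\ch_{2}$-term, and the coprimality reduction for the ``in particular'' --- so there is nothing to compare: you have supplied the proof the paper omits, along the same lines as the cited sources.
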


\begin{rem}
Let $E\in \Coh(X)$.
If $\gcd(\ch_{0}(E),H^{2}\cdot \ch_{1}(E)/ H^{3})=1$,
then for $E$, $\mu_{H}$-slope stable, $2$-$H$-Gieseker stable, 
$H$-Gieseker stable, $H$-Gieseker semistable, $2$-$H$-Gieseker semistable and $\mu_{H}$-slope semistable are equivalent; see for example \cite[Section 4]{BBF+}.
\end{rem}

\subsection{Wall-and-chamber structure}

\begin{defn}\label{numerical-wall-def}
Let $v\in K(\DC(X))$.
\begin{enumerate}
  \item[(1)] A {\it numerical wall} for $v$ in tilt-stability is a non-trivial proper subset in the upper half plane
$$
W(v,w):=  \{ (\alpha, \beta)\in \RN_{>0}\times \RN \mid \mu_{\alpha,\beta}(v)=\mu_{\alpha,\beta}(w)\}
$$ 
for $w\in K(\DC(X))$.
  \item[(2)] A {\it chamber} for $v$ is a connected component in the complement of the union of numerical walls in the upper half plane. 
\end{enumerate}
\end{defn}

The numerical walls in tilt stability have well-behaved wall-and-chamber structure ( see \cite{Mac14a} or \cite[Theorem 3.1]{Sch20a}). 

\begin{thm}[]\label{tilt-wall-struct-thm}
Let $v \in K(\DC(X))$ with $\Delta_{H}(v) \geq 0$.
All numerical walls for $v$ in the upper half plane are described as follows:
\begin{enumerate}
\item[(1)] A numerical wall for $v$ is either a semicircle centered at $\beta$-axis or a vertical line parallel to $\alpha$-axis in the upper half plane.
\item[(2)] If $\ch_{0}(v) \neq 0$, then there exists a unique numerical vertical wall for $v$ given by $\beta=\mu_{H}(v)$. 
The remaining numerical walls for $v$ are consisted of two sets of nested semicircular walls whose apexes lie on the hyperbola $\mu_{\alpha, \beta}(v) = 0$. 
\item[(3)] If $\ch_{0}(v) = 0$ and $H^{2} \cdot \ch_{1}(v) \neq 0$, 
then every numerical wall for $v$ is a semicircle whose top point lies on the vertical line $\beta=\frac{H \cdot \ch_{2}(v)}{H^{2} \cdot \ch_{1}(v)}$ in the upper half plane.
\item[(4)] Any two distinct numerical walls intersect empty.
\end{enumerate}
\end{thm}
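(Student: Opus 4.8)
The plan is to reduce the entire statement to the explicit polynomial equation cutting out a numerical wall. For a class $u\in K(\DC(X))$ write $(u_0,u_1,u_2):=(H^3\ch_0(u),\,H^2\ch_1(u),\,H\ch_2(u))$, so that $\Im Z_{\alpha,\beta}(u)=u_1-\beta u_0$ and $\Re Z_{\alpha,\beta}(u)=\tfrac12\alpha^2u_0-u_2+\beta u_1-\tfrac12\beta^2u_0$. Given $w$, I would clear denominators in $\mu_{\alpha,\beta}(v)=\mu_{\alpha,\beta}(w)$ and expand; the one computation to get right is that, writing $\gamma_{ij}:=v_iw_j-v_jw_i$, the cubic terms in $\beta$ and the mixed $\alpha^2\beta$ term cancel, and the equation collapses to
\[
-\tfrac12\,\gamma_{01}\,(\alpha^2+\beta^2)+\gamma_{02}\,\beta-\gamma_{12}\;=\;0 .
\]
If $\gamma_{01}\neq0$ this is a circle centred on the $\beta$-axis, whose trace in the open upper half plane is a full semicircle; if $\gamma_{01}=0$ and $\gamma_{02}\neq0$ it is the vertical line $\beta=\gamma_{12}/\gamma_{02}$; and if $\gamma_{01}=\gamma_{02}=0$ the locus is empty or everything, hence not a wall. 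This gives (1) and shows there is at most one vertical numerical wall.

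For (2), assume $v_0\neq0$. A vertical wall forces $\gamma_{01}(v,w)=0$, and the Plücker-type identity $v_2\gamma_{01}-v_1\gamma_{02}+v_0\gamma_{12}=0$ (a one-line expansion) then gives $\gamma_{12}/\gamma_{02}=v_1/v_0=\mu_H(v)$, so the vertical wall is $\beta=\mu_H(v)$, independent of $w$, and is realised by any $w$ with $v_0w_1=v_1w_0$ and $v_0w_2\neq v_2w_0$. For a semicircular wall the apex is $(\beta_0,\rho)$ with $\beta_0=\gamma_{02}/\gamma_{01}$ and $\rho^2=\beta_0^2-2\gamma_{12}/\gamma_{01}$, and the same identity yields $\Re Z_{\rho,\beta_0}(v)=0$, i.e.\ the apex lies on $\{\mu_{\alpha,\beta}(v)=0\}$, which rewrites as $(\beta-\mu_H(v))^2-\alpha^2=\Delta_H(v)/v_0^2\ge0$. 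Since $\Delta_H(v)\ge0$ this forces $\rho\le|\beta_0-\mu_H(v)|$, so every semicircular wall lies entirely on one side of the line $\beta=\mu_H(v)$; these are the ``two sets''. Parametrising each branch of the hyperbola as $\beta=\mu_H(v)\pm\sqrt{\alpha^2+\Delta_H(v)/v_0^2}$ and using that $\rho\mapsto\sqrt{\rho^2+k}-\rho$ is decreasing while $\rho\mapsto\sqrt{\rho^2+k}+\rho$ is increasing, one sees that on each side the semicircular walls are totally ordered by radius, hence nested, once disjointness (4) is in hand. Part (3) is analogous and easier: if $v_0=0$ and $v_1\neq0$ then $\gamma_{01}=-v_1w_0$ and $\gamma_{02}=-v_2w_0$, so a genuine wall forces $w_0\neq0$ and has centre $\beta_0=\gamma_{02}/\gamma_{01}=v_2/v_1$ independent of $w$; thus there is no vertical wall and every wall has top point on $\beta=H\cdot\ch_2(v)/(H^2\cdot\ch_1(v))$.

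For (4), suppose distinct numerical walls $W(v,w_1)$ and $W(v,w_2)$ meet at a point $(\alpha_0,\beta_0)$; both are then semicircular, and neither $w_i$ maps to a real multiple of $v$ in $\Lambda_H^2\otimes\RN$ (otherwise $W(v,w_i)$ would be the whole half plane). Along a semicircular wall $\Im Z_{\alpha,\beta}(v)\neq0$, so at $(\alpha_0,\beta_0)$ the numbers $Z_{\alpha_0,\beta_0}(v),\,Z_{\alpha_0,\beta_0}(w_1),\,Z_{\alpha_0,\beta_0}(w_2)$ are pairwise real-proportional and span a real line in $\CN$; but $Z_{\alpha,\beta}\colon\Lambda_H^2\otimes\RN\cong\RN^3\to\CN$ is surjective for every $(\alpha,\beta)$ (the $2\times2$ minor of its real matrix on the $\ch_1,\ch_2$ columns equals $1$), so $v,w_1,w_2$ are linearly dependent in $\Lambda_H^2\otimes\RN$. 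Writing $w_2=sv+tw_1$ and using that $Z_{\alpha,\beta}(w_1)$ is real-proportional to $Z_{\alpha,\beta}(v)$ along all of $W(v,w_1)$, we get $\mu_{\alpha,\beta}(w_2)=\mu_{\alpha,\beta}(v)$ on $W(v,w_1)$, so $W(v,w_1)\subseteq W(v,w_2)$; two full semicircles with one contained in the other coincide, contradicting distinctness. I expect the main obstacle to be organising the wall-equation expansion so that the cancellations and the Plücker identity are transparent, and then arranging the argument so the nesting in (2) drops out of disjointness rather than from a case analysis of the semicircles' endpoints.
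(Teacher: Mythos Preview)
Your proposal is correct. The paper does not prove this theorem at all: it is quoted as a known result with citations to Maciocia \cite{Mac14a} and Schmidt \cite[Theorem~3.1]{Sch20a}, so there is no in-paper proof to compare against. What you have written is essentially the standard argument from those references, carried out cleanly via the Pl\"ucker coordinates $\gamma_{ij}$.

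Two small points worth tightening. In part~(4) you assert that both walls meeting at $(\alpha_0,\beta_0)$ are semicircular; this is true but deserves one sentence: you already showed in~(2) that when $v_0\neq0$ every semicircular wall satisfies $\rho\le|\beta_0-\mu_H(v)|$ (equivalently, $\Im Z_{\alpha,\beta}(v)\neq0$ along it in the open upper half plane), so no semicircular wall meets the vertical wall $\beta=\mu_H(v)$, and when $v_0=0$ there is no vertical wall at all by~(3). Second, in the linear-dependence step you should note explicitly that $t\neq0$ (else $w_2$ is a real multiple of $v$ and $W(v,w_2)$ is not a proper nontrivial subset), which you do implicitly. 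With these remarks the argument is complete.
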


In this paper, the key technique and main difficult are the determining actual walls in tilt stability.

\begin{defn}\label{wall-def}
A numerical wall $W$ for $v\in K(\DC(X))$ is called an {\it actual wall} for $v$ if there exists a short exact sequence of $\sigma_{\alpha,\beta}$-semistable objects
\begin{equation}\label{def-wall-sequ}
\xymatrix@C=0.5cm{
0 \ar[r]^{} & A \ar[r]^{} & E \ar[r]^{} & B \ar[r]^{} & 0}
\end{equation}
in $\Coh^{\beta}(X)$ for one $(\alpha,\beta)\in W(E,A)$ such that $v=\ch(E)$ and $\mu_{\alpha,\beta}(A)=\mu_{\alpha,\beta}(E)$ defines the numerical wall $W$ (i.e., $W=W(E,A)$). 
\end{defn}

\begin{rem}\label{actual-wall-restriction}
By Definition \ref{wall-def}, 
to determine an actual wall, the following restrictions hold:
\begin{enumerate}
  \item[(1)] $\mu_{\alpha,\beta}(A)=\mu_{\alpha,\beta}(E)=\mu_{\alpha,\beta}(B)$;
  \item[(2)] $\Delta_{H}(A)\geq 0, \Delta_{H}(B)\geq 0$;
  \item[(3)] $\Delta_{H}(A)\leq \Delta_{H}(E)$, $\Delta_{H}(B)\leq \Delta_{H}(E)$.
\end{enumerate} 
Moreover, if an actual wall is given by the sequence \eqref{def-wall-sequ} of tilt semistability,
then 
$$
\Delta_{H}(A)+\Delta_{H}(B)\leq \Delta_{H}(E),
$$
and the equality holds if either $A$ or $B$ is a sheaf supported in points (see \cite[Proposition 12.5]{BMS16}).
\end{rem}

For any tilt semistable object $E$ with $\ch_{0}(E)\neq 0$, we write 
$$
\beta_{-}(E):=\mu_{H}(E)-\sqrt{\frac{\Delta_{H}(E)}{(H^{3}\cdot \ch_{0}(E))^{2}}}
\; \textrm{ and }\;
\beta_{+}(E):=\mu_{H}(E)+\sqrt{\frac{\Delta_{H}(E)}{(H^{3}\cdot \ch_{0}(E))^{2}}},
$$
which are determined by the equation $\mu_{\alpha,\beta}(E)=0$.
Then we have the following properties of destabilizing sequences (see \cite[Proposition 2.7]{Sch23} or \cite[\S 3]{BMSZ17}).

\begin{prop}\label{new-cond-for-wall}
Suppose $\xymatrix@C=0.5cm{
0 \ar[r]^{} & A \ar[r]^{} & E \ar[r]^{} & B \ar[r]^{} & 0}$ is a destabilizing sequence for $E$ along a actual wall $W$.
\begin{enumerate}
  \item[(1)] If $E$ has strictly positive rank, then either $\ch_{0}(A)>0$ and $\mu_{H}(A)\leq \mu_{H}(E)$, or $\ch_{0}(B)>0$ and $\mu_{H}(B)\leq \mu_{H}(E)$.
\item[(2)] If $W$ is a semicircular, $E$ has strictly positive rank, $\ch_{0}(A)>0$ and $\mu_{H}(A)\leq \mu_{H}(E)$, then $\beta_{-}(E)<\beta_{-}(A) \leq \mu_{H}(A)<\mu_{H}(E)$.
\end{enumerate}
\end{prop}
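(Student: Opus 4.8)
The plan is to fix a point $(\alpha_{0},\beta_{0})\in W$ at which the destabilizing sequence $0\to A\to E\to B\to 0$ lives, so that $A,B,E\in\Coh^{\beta_{0}}(X)$ are $\sigma_{\alpha_{0},\beta_{0}}$-semistable with the same $\mu_{\alpha_{0},\beta_{0}}$-slope, and to write $r_{F}:=H^{3}\cdot\ch_{0}(F)$. The one elementary input used throughout is that for any $F\in\Coh^{\beta_{0}}(X)$ with $r_{F}\neq 0$ one has $0\le\Im Z_{\alpha_{0},\beta_{0}}(F)=H^{2}\cdot\ch_{1}^{\beta_{0}}(F)=r_{F}\bigl(\mu_{H}(F)-\beta_{0}\bigr)$, so that $r_{F}>0$ forces $\mu_{H}(F)\ge\beta_{0}$ and $r_{F}<0$ forces $\mu_{H}(F)\le\beta_{0}$. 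For part~(1): since $r_{E}=r_{A}+r_{B}>0$ at least one summand is positive, and as the assertion is symmetric in $A\leftrightarrow B$ I may assume $r_{A}>0$; if $\mu_{H}(A)\le\mu_{H}(E)$ the first alternative holds, so I would suppose $\mu_{H}(A)>\mu_{H}(E)$ and derive the second. The case $r_{B}=0$ is impossible, as it would give (using $H^{2}\cdot\ch_{1}(B)\ge 0$ and additivity of $\ch$) the inequality $\mu_{H}(A)\le\mu_{H}(E)$; so $r_{B}\neq 0$, and the identity $(\mu_{H}(E)-\mu_{H}(A))r_{A}=(\mu_{H}(B)-\mu_{H}(E))r_{B}$, which comes from $H^{2}\cdot\ch_{1}(E)=H^{2}\cdot\ch_{1}(A)+H^{2}\cdot\ch_{1}(B)$, has negative left-hand side. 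For $r_{B}<0$ this would force $\mu_{H}(B)>\mu_{H}(E)$, contradicting $\mu_{H}(B)\le\beta_{0}\le\mu_{H}(E)$; hence $r_{B}>0$, and then the same identity yields $\mu_{H}(B)<\mu_{H}(E)$, which is the second alternative.

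For part~(2) the inequality $\beta_{-}(A)\le\mu_{H}(A)$ is immediate from $\Delta_{H}(A)\ge 0$ (Bogomolov inequality in tilt stability). The strict inequality $\mu_{H}(A)<\mu_{H}(E)$ I would obtain by contradiction: if $\mu_{H}(A)=\mu_{H}(E)$, then on $W$ one has $\Im Z_{\alpha,\beta}(A)=\lambda\,\Im Z_{\alpha,\beta}(E)$ with $\lambda:=\ch_{0}(A)/\ch_{0}(E)$ a constant, and equality of slopes on $W$ then forces $\Re Z_{\alpha,\beta}(A)=\lambda\,\Re Z_{\alpha,\beta}(E)$ there as well; but after cancelling the $\alpha^{2}$-terms this difference becomes a polynomial in $\beta$ alone, vanishing on the arc $W$ and hence identically, so $Z_{\alpha,\beta}(A)=\lambda\,Z_{\alpha,\beta}(E)$ for all $(\alpha,\beta)$ and $W(E,A)$ would be the entire upper half-plane, contradicting that it is a numerical wall (Definition~\ref{numerical-wall-def}). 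Thus $\mu_{H}(A)<\mu_{H}(E)$.

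It remains to prove $\beta_{-}(E)<\beta_{-}(A)$, which is the heart of the matter. The plan here is to pass to the apex $(\rho,c)$ of the semicircle $W$ (centre $c$ on the $\beta$-axis, radius $\rho>0$): using the standard fact that along the open arc of a semicircular actual wall the terms of a destabilizing sequence stay $\sigma_{\alpha,\beta}$-semistable and in the heart, one has $A,E\in\Coh^{c}(X)$, so $\Im Z_{\rho,c}(F)\ge 0$ for $F\in\{A,E\}$, while Theorem~\ref{tilt-wall-struct-thm}(2), applied to $\ch(E)$ and to $\ch(A)$, forces the apex onto the locus $\mu_{\alpha,\beta}=0$, i.e.\ $\Re Z_{\rho,c}(F)=0$ and $\Im Z_{\rho,c}(F)\neq 0$; hence $\Im Z_{\rho,c}(F)>0$, i.e.\ $c<\mu_{H}(F)$, and note $\beta_{+}(F)\ge\mu_{H}(F)>c$. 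Expanding $\Re Z_{\rho,c}(F)=\tfrac{1}{2}\rho^{2}r_{F}-H\cdot\ch_{2}^{c}(F)$ with $H\cdot\ch_{2}^{c}(F)=\tfrac{r_{F}}{2}\bigl(c-\beta_{-}(F)\bigr)\bigl(c-\beta_{+}(F)\bigr)$ then yields $\rho^{2}=(c-\beta_{-}(E))(c-\beta_{+}(E))=(c-\beta_{-}(A))(c-\beta_{+}(A))>0$, and since $c<\beta_{+}(E)$ and $c<\beta_{+}(A)$ the positivity of these products forces $c<\beta_{-}(E)$ and $c<\beta_{-}(A)$. Putting $u:=\beta_{-}(E)-c$, $v:=\beta_{+}(E)-c$, $u':=\beta_{-}(A)-c$, $v':=\beta_{+}(A)-c$ (all positive), one gets $uv=u'v'=\rho^{2}$ and $u+v=2(\mu_{H}(E)-c)>2(\mu_{H}(A)-c)=u'+v'$; since $u$ and $u'$ are the smaller roots of $t^{2}-St+\rho^{2}$ for $S=u+v$ and $S'=u'+v'$ with $S>S'\ge 2\rho$, and $S\mapsto\tfrac{1}{2}\bigl(S-\sqrt{S^{2}-4\rho^{2}}\bigr)$ is strictly decreasing on $[2\rho,\infty)$, one concludes $u<u'$, i.e.\ $\beta_{-}(E)<\beta_{-}(A)$. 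The only step carrying genuine content is this last one: the bookkeeping in part~(1) and the two preliminary inequalities in part~(2) are routine once the heart-positivity observation and the properness of $W$ as a numerical wall are available, whereas $\beta_{-}(E)<\beta_{-}(A)$ rests essentially on Theorem~\ref{tilt-wall-struct-thm}(2), which pins the single radius $\rho^{2}$ to $(c-\beta_{-}(F))(c-\beta_{+}(F))$ for both $F=A$ and $F=E$ simultaneously, after which it is one-variable monotonicity.
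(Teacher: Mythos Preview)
Your argument is correct. The paper does not prove this proposition itself; it simply cites \cite[Proposition 2.7]{Sch23} and \cite[\S 3]{BMSZ17}, so there is no ``paper's own proof'' to compare against beyond noting that your approach is precisely the standard one found in those references: the linear bookkeeping with $\Im Z$ and the identity $r_A(\mu_H(E)-\mu_H(A))=r_B(\mu_H(B)-\mu_H(E))$ for part~(1), and for part~(2) the observation that the apex of the semicircular wall lies simultaneously on the hyperbolae $\mu_{\alpha,\beta}(E)=0$ and $\mu_{\alpha,\beta}(A)=0$, yielding $\rho^{2}=(c-\beta_{-}(F))(c-\beta_{+}(F))$ for both $F=E,A$, followed by the AM--GM/monotonicity step.

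Two minor remarks worth recording. First, in your ruling out of $\mu_H(A)=\mu_H(E)$ you could shorten the argument: once $\Im Z_{\alpha,\beta}(A)=\lambda\,\Im Z_{\alpha,\beta}(E)$ identically and the $\alpha^{2}$-terms cancel in the real part, the difference $\Re Z_{\alpha,\beta}(A)-\lambda\,\Re Z_{\alpha,\beta}(E)$ is in fact a degree-$\le 2$ polynomial in $\beta$ with leading coefficient zero and subleading coefficient $-(H^{2}\cdot\ch_{1}(A)-\lambda\,H^{2}\cdot\ch_{1}(E))=0$ as well, so it is a constant; your vanishing-on-an-interval argument then only needs a single point, which you already have. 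Second, the ``standard fact'' you invoke---that the destabilizing sequence persists in the heart along the entire semicircle---is exactly what the paper's reference \cite[Theorem~3.3]{Sch20a} (and \cite{BMS16}) supplies, so your appeal to it is on the same footing as the paper's citation.
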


In the case of smooth cubic threefolds, 
the boundary of the open region $R_{\frac{1}{4}}$ in \cite{Li19}
is a curve of segment lines
$$
\{(x, nx-\frac{n^{2}}{2}) \mid \frac{2n-1}{2} \leq x\leq \frac{2n+1}{2}, n\in \ZN\} \subset \RN^{2}.
$$

\begin{prop}[{\cite[Proposition 3.2]{Li19}}]\label{Li-bound}
Suppose that $E\in \DC(X)$ is a $\sigma_{\alpha,\beta}$-stable with $\ch_{0}(E)\neq 0$ for some $\alpha>0$ and $\beta\in \RN$.
Then the point 
$$
\widetilde{v}_{H}(E):=\big(\frac{H^{2}\cdot \ch_{1}(E)}{H^{3}\cdot \ch_{0}(E)},\frac{H \cdot \ch_{2}(E)}{H^{3}\cdot \ch_{0}(E)}\big) \in \RN^{2}
$$ 
is not in the region $R_{\frac{1}{4}}$.
In particular, we have
\begin{enumerate}
  \item[(1)] If $|\mu_{H}(E)|\leq \frac{1}{2}$, then $\frac{H\cdot \ch_{2}(E)}{H^{3}\cdot \ch_{0}(E)}\leq 0$. 
  \item[(2)] If $\frac{1}{2}<|\mu_{H}(E)| \leq 1$,
  then $\frac{H\cdot \ch_{2}(E)}{H^{3}\cdot \ch_{0}(E)}\leq |\mu_{H}(E)|-\frac{1}{2}$.
\end{enumerate}
If $\widetilde{v}_{H}(E)$ is on the boundary of $R_{\frac{1}{4}}$, then $\ch_{0}(E)=\pm 1$, or $\pm 2$.
\end{prop}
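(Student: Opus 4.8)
The plan is to reduce everything to a single numerical inequality about the invariant $\widetilde{v}_H(E)$ and the geometry of the region $R_{\frac14}$. First I would recall the defining property of $R_{\frac14}$: its boundary is the piecewise-linear curve $\{(x,nx-\tfrac{n^2}{2}) \mid \tfrac{2n-1}{2}\le x\le \tfrac{2n+1}{2},\ n\in\ZN\}$, and the key input (from \cite{Li19}, established via the Bogomolov-type and Bogomolov--Gieseker-type inequalities available on a cubic threefold, together with a study of walls near the $\beta$-axis) is that no $\sigma_{\alpha,\beta}$-stable object of nonzero rank can have $\widetilde{v}_H(E)$ strictly inside $R_{\frac14}$. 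Granting that statement, parts (1) and (2) are pure elementary geometry: one just has to describe the complement of $R_{\frac14}$ in the vertical strips $|x|\le\tfrac12$ and $\tfrac12<|x|\le 1$ and read off the inequality on the second coordinate $y=\tfrac{H\cdot\ch_2(E)}{H^3\cdot\ch_0(E)}$.

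Concretely, for part (1) the relevant boundary arc over $|x|\le\tfrac12$ corresponds to $n=0$, i.e. the segment $y=0$ for $-\tfrac12\le x\le\tfrac12$; since $R_{\frac14}$ lies above (inside) this arc near the $x$-axis, the condition $\widetilde{v}_H(E)\notin R_{\frac14}$ forces $y\le 0$, which is exactly $\tfrac{H\cdot\ch_2(E)}{H^3\cdot\ch_0(E)}\le 0$. For part (2), over $\tfrac12<x\le 1$ the boundary is the segment $y=x-\tfrac12$ (the $n=1$ piece), and over $-1\le x<-\tfrac12$ it is the segment $y=-x-\tfrac12$ (the $n=-1$ piece); in both cases this reads $y\le|\mu_H(E)|-\tfrac12$, giving the stated bound. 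The final sentence — that a point on the boundary of $R_{\frac14}$ forces $\ch_0(E)=\pm1$ or $\pm2$ — I would obtain by invoking the more refined form of Li's result: points of the boundary correspond to walls along which $E$ is destabilized by objects of small rank (line bundles, ideal sheaves of lines, or their mutations), and chasing through the rank bookkeeping in the destabilizing sequence, using Proposition \ref{new-cond-for-wall} to control ranks and slopes of the factors, pins the rank of $E$ down to $\pm1$ or $\pm2$.

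The main obstacle is not the elementary geometry but making the appeal to \cite[Proposition 3.2]{Li19} genuinely self-contained: the nontrivial content is the claim that stability near the $\beta$-axis is obstructed precisely by the region $R_{\frac14}$, which rests on the fractional-Calabi--Yau structure of $\Ku(X)$ and on a careful analysis of potential destabilizing walls accumulating at the wall $\beta=\mu_H(E)$. Since the proposition is quoted verbatim from Li's paper, I would simply cite it and then spend the few lines of new work on translating the geometric statement $\widetilde{v}_H(E)\notin R_{\frac14}$ into the explicit inequalities (1) and (2), plus the short rank computation for the boundary case, as sketched above.
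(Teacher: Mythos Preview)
The paper does not give a proof of this proposition at all: it is simply quoted as \cite[Proposition 3.2]{Li19} and used as a black box throughout. So there is nothing to compare your argument to on the paper's side.

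That said, your unpacking of parts (1) and (2) from the explicit boundary description of $R_{\frac14}$ is correct and is exactly the intended reading of the statement. Your treatment of the final boundary clause, however, is not a proof: invoking ``rank bookkeeping in the destabilizing sequence'' via Proposition \ref{new-cond-for-wall} does not actually pin down $\ch_0(E)$; that proposition only constrains slopes of factors along a wall and says nothing that forces the rank of $E$ itself to be $\pm1$ or $\pm2$. In Li's argument the boundary claim comes from a different mechanism: a point on the boundary segment for a given $n$ has the same $(\mu_H,\ch_2/\ch_0)$ as $\CO_X(nH)$, and the integrality constraints on $\ch_1$ and $\ch_2$ (together with the classification of objects realizing equality in the relevant Bogomolov-type inequality) then force the small rank. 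If you want to include a self-contained argument rather than a bare citation, that is the line to take; otherwise, the correct move is exactly what the paper does, namely cite \cite{Li19} and move on.
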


 \subsection{Stability conditions on $\Ku(X)$}
 Let $X$ be a smooth cubic threefold.
Then there is a semiorthogonal decomposition 
$$
\DC(X)=\langle \Ku(X),\CO_{X}, \CO_{X}(H) \rangle
$$
of the bounded derived category $\DC(X)$; see \cite{Kuz04} for more detailed discussions.
By \cite[Proposition 2.7]{BMMS12}, the numerical Grothendieck group of $\Ku(X)$ is a lattice of rank $2$
$$
\mathcal{N}(\Ku(X))=\ZN[I_{\ell}]\oplus \ZN[S(I_{\ell})],
$$
where $I_{\ell}$ is the ideal sheaf of a line $\ell\subset X$ and $S$ is the Serre functor of $\Ku(X)$.
Stability conditions on $\Ku(X)$ have been constructed in \cite{BMMS12} and recently in \cite{BLMS23} using a different construction. 
Next let us recall the construction in \cite{BLMS23}.
Consider the tilted heart $\Coh^{0}_{\alpha,\beta}(X)=\langle \mathcal{F}_{\alpha,\beta}[1], \mathcal{T}_{\alpha,\beta} \rangle $ of tilt stability 
$\sigma_{\alpha,\beta}$ at $\mu_{\alpha,\beta}=0$, where $ \mathcal{F}_{\alpha,\beta}$ (resp. $ \mathcal{T}_{\alpha,\beta}$) is the subcategory of $\Coh^{\beta}(X)$ with $\mu_{\alpha,\beta}^{+}\leq 0$ (resp. $\mu_{\alpha,\beta}^{+}>0$).
By \cite[Proposition 2.15]{BLMS23}, the pair $\sigma^{0}_{\alpha,\beta}=(\Coh^{0}_{\alpha,\beta}(X),-iZ_{\alpha,\beta})$ is a weak stability condition on $\DC(X)$.
Following \cite[Theorem 6.8]{BLMS23}, 
the explicit computations in \cite[Theorem 3.3]{PY22} yields  
a family $\sigma(\alpha,\beta)$ of stability conditions on $\Ku(X)$, 
where $(\alpha,\beta)$ lies in the triangular region
$$
 \mathcal{V}:=\{(\alpha,\beta)\in \RN_{>0}\times \RN_{<0} \mid -\frac{1}{2} \leq \beta,  \alpha < -\beta;  -1 < \beta <-\frac{1}{2}, \alpha \leq 1+\beta\}.
$$ 
More precisely, we have the following

\begin{thm}[{\cite[Theorem 6.8]{BLMS23}, \cite[Theorem 3.3]{PY22}}]\label{thm_U}
For any $(\alpha,\beta)\in  \mathcal{V}$, the pair
$\sigma(\alpha, \beta):=(\A(\alpha, \beta), Z(\alpha, \beta))$
is a Bridgeland stability condition on $\Ku(X)$ 
with respect to $\Lambda_{H, \Ku(X)}^{2}:= \mathrm{Im}(K(\Ku(X)) \rightarrow K(X) \to \Lambda_H^{2}) \cong \ZN^{2}$,
where 
$$
\A(\alpha, \beta):=\Coh^{0}_{\alpha,\beta}(X)\cap \Ku(X)
 \; \textrm{ and }\;  
 Z(\alpha, \beta):=-iZ_{\alpha,\beta}|_{\Ku(X)}.
$$
\end{thm}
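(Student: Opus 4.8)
The plan is to exhibit $\sigma(\alpha,\beta)$ as the stability condition \emph{induced} on the semiorthogonal component $\Ku(X)=\langle \CO_{X},\CO_{X}(H)\rangle^{\perp}$ by the weak stability condition $\sigma^{0}_{\alpha,\beta}=(\Coh^{0}_{\alpha,\beta}(X),-iZ_{\alpha,\beta})$ on $\DC(X)$, and then to invoke the general inducing criterion of Bayer--Lahoz--Macr{\`{\i}}--Stellari \cite{BLMS23}. The excerpt already records that $\sigma^{0}_{\alpha,\beta}$ is a genuine \emph{weak} stability condition on all of $\DC(X)$; its central charge fails to be a strict stability function only along the degenerate directions contributed by the objects orthogonal to $\Ku(X)$, namely the line bundles $\CO_{X}$, $\CO_{X}(H)$ and the skyscraper sheaves. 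Restricting to $\Ku(X)$ should remove this degeneracy and promote the weak stability condition to a genuine Bridgeland one. Concretely, with $\A(\alpha,\beta)=\Coh^{0}_{\alpha,\beta}(X)\cap\Ku(X)$ and $Z(\alpha,\beta)=-iZ_{\alpha,\beta}|_{\Ku(X)}$, I would verify the three defining requirements: that $\A(\alpha,\beta)$ is the heart of a bounded $t$-structure on $\Ku(X)$, that $Z(\alpha,\beta)$ is a genuine stability function on it, and that the support property holds with respect to $\Lambda_{H,\Ku(X)}^{2}$.

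The heart axiom is exactly what the BLMS criterion controls, and its hypotheses are what carve out the region $\mathcal{V}$. First I would show that for every $(\alpha,\beta)\in\mathcal{V}$ each of $\CO_{X}$ and $\CO_{X}(H)$, suitably shifted into $\Coh^{0}_{\alpha,\beta}(X)$, is $\sigma^{0}_{\alpha,\beta}$-stable. Being line bundles they are $\mu_{H}$-slope stable, hence $\sigma_{\alpha,\beta}$-stable for $\alpha\gg 0$ by the large volume limit (Proposition \ref{limit-tilt-stab-Gie-stab-prop}); I would then rule out tilt walls for them over the relevant range using the nested semicircular wall structure (Theorem \ref{tilt-wall-struct-thm}), the discriminant restrictions of Remark \ref{actual-wall-restriction}, and the boundary bound of Proposition \ref{Li-bound}, which constrains the numerical invariants of any hypothetical destabilizer. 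Passing from $\sigma_{\alpha,\beta}$ to $\sigma^{0}_{\alpha,\beta}$ by the rotation $-i$, I would compute the phases of $\CO_{X}$ and $\CO_{X}(H)$ and check that on $\mathcal{V}$ they lie in a half-open interval of length one, in the order prescribed by the semiorthogonal decomposition, so that $\CO_{X},\CO_{X}(H)$ form an exceptional collection inside the heart. The two pieces of $\mathcal{V}$, namely $-\frac{1}{2}\le\beta<0$ and $-1<\beta<-\frac{1}{2}$, are precisely where this phase-window condition is met, after possibly shifting $\CO_{X}(H)$. Granting this, the BLMS criterion yields that $\A(\alpha,\beta)$ is the heart of a bounded $t$-structure on $\Ku(X)$.

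Next I would check that $Z(\alpha,\beta)$ is a genuine stability function on $\A(\alpha,\beta)$. Writing $\Re(-iZ_{\alpha,\beta})=H^{2}\cdot\ch^{\beta}_{1}$, the degenerate locus $\{E\neq 0 : Z^{0}(E)=0\}$ of the weak central charge is generated, via extensions and shifts, by $\CO_{X}$, $\CO_{X}(H)$ and skyscraper sheaves; since all of these lie outside $\Ku(X)$, this locus meets $\A(\alpha,\beta)$ only in $0$, so for $0\neq E\in\A(\alpha,\beta)$ with $\Im Z(\alpha,\beta)(E)=0$ one gets the strict inequality $\Re Z(\alpha,\beta)(E)<0$ demanded of a stability function. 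Harder--Narasimhan filtrations then descend from those of the ambient $\sigma^{0}_{\alpha,\beta}$ by restriction to $\Ku(X)$. Finally, the support property on the rank-two lattice $\Lambda_{H,\Ku(X)}^{2}$ follows from the tilt-stability discriminant $\Delta_{H}$: since $Z(\alpha,\beta)$ has trivial kernel on $\Lambda_{H,\Ku(X)}^{2}\otimes\RN$ and $\Delta_{H}\ge 0$ on every $\sigma^{0}_{\alpha,\beta}$-semistable object, the restriction of $\Delta_{H}$ supplies a quadratic form that is negative definite on $\ker Z(\alpha,\beta)$ and nonnegative on semistable objects. Assembling the three requirements gives that $\sigma(\alpha,\beta)$ is a Bridgeland stability condition on $\Ku(X)$ for all $(\alpha,\beta)\in\mathcal{V}$.

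I expect the main obstacle to be the stability of $\CO_{X}$ and $\CO_{X}(H)$ uniformly over $\mathcal{V}$, together with the exact determination of that region. Excluding every potential tilt-destabilizing subobject requires combining the wall-and-chamber analysis with the discriminant inequalities of Remark \ref{actual-wall-restriction} and the boundary constraints of Proposition \ref{Li-bound}, and the ensuing phase bookkeeping that pins down $\mathcal{V}$ (and the shift of $\CO_{X}(H)$ across $\beta=-\frac{1}{2}$) is delicate. This is precisely the explicit verification carried out in \cite[Theorem 3.3]{PY22}, which refines the general construction of \cite[Theorem 6.8]{BLMS23}.
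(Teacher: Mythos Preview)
The paper does not supply a proof of this statement: it is recorded in the Preliminaries as a result quoted from \cite[Theorem 6.8]{BLMS23} and \cite[Theorem 3.3]{PY22}, with no argument given in the body. Your proposal accurately reconstructs the strategy of those references---applying the BLMS inducing criterion to the weak stability condition $\sigma^{0}_{\alpha,\beta}$, verifying the $\sigma^{0}_{\alpha,\beta}$-stability and phase constraints for $\CO_{X}$ and $\CO_{X}(H)$ that cut out $\mathcal{V}$, and deducing the support property from $\Delta_{H}$---so there is nothing to compare against in this paper beyond noting that your outline is consistent with the cited sources.
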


The following notion plays an important role in the study of moduli spaces of stable objects in the Kuznetsov component.
 
\begin{defn}
A stability condition $\sigma$ on $\Ku(X)$ is called {\it Serre-invariant} if there is an element $\tilde{g} \in \tilde{\mathrm{GL}}^{+}_{2}(\RN)$ such that $S \cdot \sigma=\sigma \cdot \tilde{g}$, where $S$ is the Serre functor of $\Ku(X)$.
\end{defn}

Moreover, 
we have the following

\begin{prop}\label{one-orbit-unique-Serre-SC}
For every $(\alpha,\beta)\in  \mathcal{V}$, the stability condition $\sigma(\alpha, \beta)\in \mathcal{K}:=\sigma(\alpha_0,-\frac{1}{2}) \cdot \tilde{\mathrm{GL}}^{+}_{2}(\RN)$  for $0 < \alpha_0 < \frac{1}{2}$ and every stability condition in $\mathcal{K}$ is a Serre-invariant stability condition.
Moreover, all Serre-invariant stability conditions on $\Ku(X)$ lie in the same $\tilde{\mathrm{GL}}^{+}_{2}(\RN)$-orbit.
\end{prop}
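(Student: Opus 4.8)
\textbf{Proof proposal for Proposition \ref{one-orbit-unique-Serre-SC}.}

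The plan is to combine the explicit parametrization of Theorem \ref{thm_U} with an abstract uniqueness statement for Serre-invariant stability conditions, and then verify by direct computation that the ``corner'' stability condition $\sigma(\alpha_0,-\tfrac12)$ actually is Serre-invariant. First I would recall from \cite{PY22,FP23} (or the relevant reference in the Serre-invariance literature cited in the excerpt) that the subset $\mathcal{V}$ of the parameter space already maps into a single $\tilde{\mathrm{GL}}^+_2(\RN)$-orbit: the point is that $-iZ_{\alpha,\beta}$ depends on $(\alpha,\beta)$ only through an invertible real $2\times 2$ matrix acting on the fixed pair of linear functionals $(H^2\ch_1^\beta, \tfrac12\alpha^2H^3\ch_0^\beta - H\ch_2^\beta)$ composed with the lattice map, while the heart $\A(\alpha,\beta)$ is locally constant in $(\alpha,\beta)$ along $\mathcal{V}$ (this is exactly the content behind \cite[Theorem 3.3]{PY22}). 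Hence every $\sigma(\alpha,\beta)$ with $(\alpha,\beta)\in\mathcal{V}$ lies in $\sigma(\alpha_0,-\tfrac12)\cdot\tilde{\mathrm{GL}}^+_2(\RN)$ for any fixed $0<\alpha_0<\tfrac12$; this gives the first assertion modulo showing the corner condition is defined, i.e. that $(\alpha_0,-\tfrac12)\in\overline{\mathcal V}$ gives an honest stability condition, which is built into the statement of Theorem \ref{thm_U} (the boundary case $\beta=-\tfrac12$, $\alpha<\tfrac12$ is included in $\mathcal V$).

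Next I would establish that $\sigma(\alpha_0,-\tfrac12)$ is Serre-invariant. The cleanest route is the one used in \cite{PY22}: the Serre functor $S$ of $\Ku(X)$ satisfies $S^3=[5]$, and one checks that $S$ maps the heart $\A(\alpha_0,-\tfrac12)$ to $\A(\alpha_0,-\tfrac12)[k]$ for a suitable shift $k$, and that the induced action on the numerical lattice $\Lambda^2_{H,\Ku(X)}\cong\ZN^2$, expressed in the basis $[I_\ell],[S(I_\ell)]$, is by a matrix whose real extension preserves (up to the $\tilde{\mathrm{GL}}^+_2$-action) the central charge $Z(\alpha_0,-\tfrac12)$. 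Concretely, $[S]$ acts on $\mathcal N(\Ku(X))$ by $[I_\ell]\mapsto[S(I_\ell)]$ and $[S(I_\ell)]\mapsto -[I_\ell]-[S(I_\ell)]$ (forced by $[S]^3=\mathrm{id}$ on the lattice together with $[I_\ell],[S(I_\ell)]$ being a basis, and the trace/determinant constraints), a matrix of order $6$; composing with the antisymmetric pairing one verifies that $S\cdot\sigma(\alpha_0,-\tfrac12)$ and $\sigma(\alpha_0,-\tfrac12)$ have the same semistable objects with phases differing by a fixed rotation, which is precisely the existence of $\tilde g\in\tilde{\mathrm{GL}}^+_2(\RN)$ with $S\cdot\sigma=\sigma\cdot\tilde g$. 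Combined with the first paragraph, every $\sigma(\alpha,\beta)$, $(\alpha,\beta)\in\mathcal V$, is then Serre-invariant, since Serre-invariance is preserved under the $\tilde{\mathrm{GL}}^+_2(\RN)$-action (if $S\sigma=\sigma\tilde g$ then $S(\sigma\tilde h)=(\sigma\tilde h)(\tilde h^{-1}\tilde g\tilde h)$).

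For the final ``all Serre-invariant stability conditions lie in one orbit'' claim, I would invoke the abstract classification: since $\Ku(X)$ has rank-two numerical lattice and $S^3=[5]$, an argument as in \cite{FP23} (see also \cite{PY22}) shows that any Serre-invariant $\sigma$ must, after acting by an element of $\tilde{\mathrm{GL}}^+_2(\RN)$, have heart and central charge matching $\sigma(\alpha_0,-\tfrac12)$ — the key input being that the $\tilde g$ realizing $S\cdot\sigma=\sigma\cdot\tilde g$ is forced (by $S^3=[5]$) to be a lift of a specific elliptic element of order dividing $6$ up to shift, which pins down the $\sigma$-phases of $[I_\ell]$ and $[S(I_\ell)]$ and hence the central charge up to $\tilde{\mathrm{GL}}^+_2(\RN)$, and then a Harder--Narasimhan / homological-algebra argument identifies the heart. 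The main obstacle is this last step: rigidly controlling an arbitrary Serre-invariant heart rather than one of the explicitly constructed ones. In practice I expect this to be quoted wholesale from \cite{PY22} (or \cite{FP23}), so the honest work in this paper is the explicit verification in the first two paragraphs; I would present the proposition's proof as a short assembly of Theorem \ref{thm_U}, the $\tilde{\mathrm{GL}}^+_2(\RN)$-covariance of the construction, and the cited uniqueness result, flagging that the corner case $\beta=-\tfrac12$ is the one where Serre-invariance is checked by hand.
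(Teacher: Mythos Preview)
Your proposal is correct and in fact considerably more detailed than what the paper does: the paper's proof consists entirely of the citation ``See for example the proof of \cite[Proposition 3.6, Corollary 5.5]{PY22} and \cite[Corollary 4.3]{FP23}.'' Your anticipation that the hard step (uniqueness of the Serre-invariant orbit) would be quoted wholesale from \cite{PY22,FP23} is exactly right, and your sketch of the content behind those citations is accurate.
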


\begin{proof}
See for example the proof of \cite[Proposition 3.6, Corollary 5.5]{PY22} and \cite[Corollary 4.3]{FP23}.
\end{proof}


\section{Non-emptiness of moduli spaces}\label{non-emptiness-Modsp}

From now on, we always assume that $X$ is a smooth cubic threefold.
Let $Y\subset X$ be a hyperplane section and $D$ an effective Weil divisor on $Y$.
For a non-trivial subspace $V\subset H^{0}(X,\CO_{Y}(D))$,
we denote by $\mathcal{E}_{D,V}\in \DC(X)$ the cone of the induced morphism $V\otimes \CO_{X}\rightarrow \CO_{Y}(D)$.
Thus there is a long exact sequence of cohomology sheaves
\begin{equation}\label{(BBF+)-sh-constrct}
\xymatrix@C=0.5cm{
0 \ar[r]^{} & \CH^{-1}(\mathcal{E}_{D,V}) 
\ar[r]^{} & V \otimes \CO_{X}
 \ar[r]^{} & \CO_{Y}(D)
  \ar[r]^{} & \CH^{0}(\mathcal{E}_{D,V}) 
 \ar[r]^{} & 0.}
\end{equation}
For  simplicity, 
we write $E_{D,V}:=  \CH^{-1}(\mathcal{E}_{D,V})$.
If $V=H^{0}(Y,\CO_{Y}(D))$, we set $\mathcal{E}_{D}:=  \mathcal{E}_{D,V}$ and $E_{D}:=  E_{D,V}$.
Now we recall the following construction of stable sheaves in \cite[Lemma 5.1]{BBF+}.

\begin{lem}\label{(BBF+)-sh-constrct-lem}
The sheaf $E_{D,V}$ is $\mu_{H}$-slope stable and reflexive.
If moreover, the sheaf $\CH^{0}(\mathcal{E}_{D,V}) =0$, then $E_{D,V}$ is a locally free sheaf.
\end{lem}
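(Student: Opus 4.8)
The plan is to analyze the long exact sequence \eqref{(BBF+)-sh-constrct} and extract both assertions from standard properties of the building blocks $V \otimes \CO_X$ and $\CO_Y(D)$. First I would establish reflexivity. Let $K := \ker(V \otimes \CO_X \to \CO_Y(D))$ as a subsheaf of $V \otimes \CO_X$; by definition $K = E_{D,V}$. Since $E_{D,V}$ is the kernel of a morphism of sheaves whose source $V \otimes \CO_X$ is reflexive (indeed locally free), and $\CO_Y(D)$ is a torsion sheaf supported on the divisor $Y$ (hence of codimension $1$), the cokernel of $V \otimes \CO_X \to \CO_Y(D)$, which is $\CH^0(\mathcal{E}_{D,V})$, is supported in codimension $\geq 1$. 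The key observation is that $E_{D,V}$ is a second-syzygy sheaf: it sits as the kernel $0 \to E_{D,V} \to V\otimes \CO_X \to \mathcal{I} \to 0$ where $\mathcal{I} := \mathrm{Im}(V\otimes\CO_X \to \CO_Y(D))$ is a torsion-free sheaf on the (possibly non-reduced, but we may take $Y$ smooth generically) divisor $Y$, hence $\mathcal{I}$ is a first-syzygy-type object; a sheaf which is the kernel of a surjection from a locally free sheaf onto a torsion-free sheaf of positive codimension is reflexive. I would cite the standard criterion (e.g. Hartshorne, "Stable reflexive sheaves", Prop 1.1) that a coherent sheaf $F$ on a normal variety is reflexive iff it is torsion-free and is the kernel of a morphism between locally free sheaves, or equivalently iff $F \hookrightarrow F^{\vee\vee}$ with cokernel supported in codimension $\geq 2$. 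Combined with $\mu_H$-slope stability (which forces $E_{D,V} \neq 0$ to be torsion-free), this gives reflexivity.

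Next I would prove $\mu_H$-slope stability. Suppose $F \subset E_{D,V}$ is a saturated subsheaf with $0 < \mathrm{rk}(F) < \mathrm{rk}(E_{D,V})$. Since $E_{D,V} \subset V \otimes \CO_X$, we get $F \subset V \otimes \CO_X$. The point is that any subsheaf of $V \otimes \CO_X \cong \CO_X^{\oplus n}$ has $\mu_H \leq 0$ with equality only for subsheaves of the form $W \otimes \CO_X$; more precisely $\mu_H^{\max}(V\otimes\CO_X)=0$. So $\mu_H(F) \leq 0 = \mu_H(V\otimes\CO_X)$. Meanwhile $\mathrm{ch}_1(E_{D,V}) = -\mathrm{ch}_1(\CO_Y(D)) + (\text{lower order from }\CH^0)$; since $\CO_Y(D)$ is supported on a hyperplane section, $H^2 \cdot \mathrm{ch}_1(\CO_Y(D)) = H^3 > 0$, so $H^2 \cdot \mathrm{ch}_1(E_{D,V}) < 0$, giving $\mu_H(E_{D,V}) < 0$. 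The delicate case is $\mu_H(F) = 0$: then $F = W \otimes \CO_X$ for some subspace $W \subset V$, and the composite $W \otimes \CO_X \hookrightarrow E_{D,V} \to \CO_Y(D)$ would have to be zero (as $E_{D,V}$ is the kernel), forcing $W \otimes \CO_X \subset \ker = E_{D,V}$ to factor correctly — but then $W \otimes \CO_X \to \CO_Y(D)$ zero means $W$ maps to zero in $H^0(\CO_Y(D))$, contradicting that $V$ (hence $W$) was chosen in $H^0(Y,\CO_Y(D))$ as a subspace of sections, i.e. the map $V \to H^0(\CO_Y(D))$ is injective. Hence $\mu_H(F) < 0$, and I would then need the sharper inequality $\mu_H(F) < \mu_H(E_{D,V})$, which follows because $\mu_H(E_{D,V})$ is controlled by $-H^3/\mathrm{rk}$ while $\mu_H(F) \leq -H^3/\mathrm{rk}(F) \cdot (\text{something})$... this comparison of slopes is the main technical obstacle and the place I would be most careful: one must show that no subsheaf can have slope in the narrow window $(\mu_H(F)_{\text{generic}}, \mu_H(E_{D,V}))$, using that $H^2\cdot\mathrm{ch}_1$ takes values in $H^3\ZN$ and $\mathrm{rk}(F) < \mathrm{rk}(E_{D,V})$.

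Finally, for local freeness: assume $\CH^0(\mathcal{E}_{D,V}) = 0$. Then \eqref{(BBF+)-sh-constrct} becomes a short exact sequence $0 \to E_{D,V} \to V \otimes \CO_X \to \CO_Y(D) \to 0$. Now $E_{D,V}$ is reflexive by the above, so its singular locus has codimension $\geq 3$ (Hartshorne, reflexive sheaves on a $3$-fold are locally free outside a finite set of points); to upgrade to everywhere locally free I would compute homological dimension: from the short exact sequence, $\mathrm{pd}(E_{D,V}) \leq \max(\mathrm{pd}(V\otimes\CO_X), \mathrm{pd}(\CO_Y(D)) - 1) = \max(0, 1-1) = 0$ locally at every point, using that $\CO_Y(D)$ has projective dimension $1$ as an $\CO_X$-module (it is a line bundle on a Cartier divisor $Y \subset X$, resolved by $0 \to \CO_X(-Y)\otimes\CO_X(D') \to \CO_X(D'') \to \CO_Y(D) \to 0$). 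A sheaf of projective dimension $0$ at every point is locally free. This last step is essentially the dimension-shifting / Auslander–Buchsbaum argument and is routine once the short exact sequence is in hand. The overall structure, then, is: (i) identify $E_{D,V}$ as a kernel, deduce reflexivity from the syzygy criterion; (ii) run the slope-stability argument using $\mu_H^{\max}(\CO_X^{\oplus n}) = 0$ and injectivity of $V \to H^0(\CO_Y(D))$, with the slope-window comparison as the crux; (iii) when $\CH^0 = 0$, use the resulting short exact sequence plus the projective dimension of $\CO_Y(D)$ to conclude local freeness.
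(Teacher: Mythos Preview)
The paper does not give its own proof of this lemma; it is quoted verbatim from \cite[Lemma 5.1]{BBF+}. So there is no in-paper argument to compare against, and your proposal is really an attempt to reconstruct the proof from that reference.

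Your outline for reflexivity and local freeness is essentially sound, though the justification needs adjusting. Hartshorne's criterion (kernel of a surjection from locally free onto \emph{torsion-free} is reflexive) does not apply directly, since the image $\mathcal{I}\subset\CO_Y(D)$ is torsion on $X$. What does work is a depth count: $\mathcal{I}$ is torsion-free on $Y$, hence pure of dimension $2$ on $X$, so $\depth\mathcal{I}_x\geq 1$ at every closed point of its support; the depth lemma then gives $\depth(E_{D,V})_x\geq\min(3,1+1)=2$, which is reflexivity on a smooth threefold. For local freeness when $\CH^0(\mathcal{E}_{D,V})=0$, your projective-dimension argument is correct once you observe that $\CO_Y(D)$, being reflexive (hence $S_2$, hence Cohen--Macaulay) on the $2$-dimensional Cohen--Macaulay scheme $Y$, has $\mathrm{pd}_{\CO_{X,x}}=1$ at every closed point by Auslander--Buchsbaum; this holds even when $D$ is only Weil and $Y$ is singular.

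The stability argument has a genuine gap. You correctly reduce to excluding the case $\ch_1(F)=0$, but the assertion that then $F=W\otimes\CO_X$ for some subspace $W\subset V$ is unjustified: a priori a saturated slope-$0$ reflexive subsheaf of $\CO_X^{\oplus n}$ need not be trivial (think of stable bundles with $c_1=0$). You acknowledge this is ``the main technical obstacle,'' and it is. One way to close it is to show that such an $F$ is in fact saturated in $\CO_X^{\oplus n}$ (compare $F$ with its saturation $\bar F$ there: both are reflexive of the same rank, $\bar F/F$ is supported in codimension $\geq 2$, and a double-dual argument forces $F=\bar F$), and then argue inductively or via Jordan--H\"older that a saturated reflexive slope-$0$ subsheaf of $\CO_X^{\oplus n}$ is of the form $W\otimes\CO_X$; but this last step still requires work and is where you should consult the original argument in \cite{BBF+}.
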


It is known that every smooth hyperplane section contains a smooth rational curve of degree $4$, $5$ and $6$ (cf. \cite[Lemma 2.8]{BL15}).
We observe the following

\begin{lem}\label{non-empty-lem}
Let $Y\subset X$ be a smooth hyperplane section and $D\subset Y$ a smooth rational curve of degree $d\in \{4,5,6\}$ as in \cite[Lemma 2.8]{BL15}.
\begin{enumerate}
  \item[(1)] If $d=4$, the $\mu_{H}$-slope stable sheaf $E_{D}$ is a rank-$4$ vector bundle with Chern character 
$\ch(E_{D})=2\ch(I_{\ell})+\ch(S(I_{\ell}))=(4,-H,-\frac{5}{6}H^{2},\frac{1}{6}H^{3})$.
  \item[(2)] If $d=5$, the $\mu_{H}$-slope stable sheaf $E_{D}$ is a rank-$5$ vector bundle with Chern character 
$\ch(E_{D})=3\ch(I_{\ell})+\ch(S(I_{\ell}))=(5,-H,-\frac{7}{6}H^{2},\frac{1}{6}H^{3})$.
  \item[(3)] If $d=6$, the $\mu_{H}$-slope stable sheaf $E_{D}$ is a rank-$6$ vector bundle with Chern character 
$\ch(E_{D})=4\ch(I_{\ell})+\ch(S(I_{\ell}))=(6,-H,-\frac{3}{2}H^{2},\frac{1}{6}H^{3})$.
\end{enumerate}
In particular, the moduli space $\overline{M}_{X}(\ch(E_{D}))$ of $H$-Gieseker semistable sheaves with Chern character $\ch(E_{D})$ is non-empty.
\end{lem}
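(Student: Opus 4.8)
The plan is to apply the construction of Lemma~\ref{(BBF+)-sh-constrct-lem} with $Y\subset X$ a smooth hyperplane section and $D\subset Y$ a smooth rational curve of degree $d\in\{4,5,6\}$, whose existence is guaranteed by \cite[Lemma 2.8]{BL15}. By Lemma~\ref{(BBF+)-sh-constrct-lem} the sheaf $E_D=\CH^{-1}(\mathcal{E}_D)$ is automatically $\mu_H$-slope stable and reflexive, so the only work is: (i) to show $\CH^0(\mathcal{E}_D)=0$, which will upgrade $E_D$ to a locally free sheaf, and (ii) to compute $\ch(E_D)$ and match it against the claimed classes. Once both are done, $\mu_H$-slope stability of $E_D$ together with the Remark after Proposition~\ref{limit-tilt-stab-Gie-stab-prop} (noting $\gcd(\ch_0,H^2\cdot\ch_1/H^3)=\gcd(d+1,1)=1$ in each case, since $H^2\cdot\ch_1(E_D)/H^3=-1$) gives that $E_D$ is $H$-Gieseker stable, hence $\overline{M}_X(\ch(E_D))\ni[E_D]$ is non-empty.

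For the vanishing $\CH^0(\mathcal{E}_D)=0$: from the exact sequence \eqref{(BBF+)-sh-constrct} with $V=H^0(Y,\CO_Y(D))$, the cokernel $\CH^0(\mathcal{E}_D)$ is exactly the cokernel of the evaluation map $H^0(Y,\CO_Y(D))\otimes\CO_X\to\CO_Y(D)$, i.e.\ the cokernel of $H^0(Y,\CO_Y(D))\otimes\CO_Y\to\CO_Y(D)$ viewed on $X$. So it suffices to check that $\CO_Y(D)$ is globally generated on $Y$. Since $Y$ is a smooth cubic surface (a del Pezzo surface of degree $3$) and $D$ is a smooth rational curve of low degree, one computes $\CO_Y(D)$ explicitly: $D$ is a twisted cubic, a degree-$5$ elliptic normal curve's analogue, etc., and in each of the cases $d=4,5,6$ the divisor class $D$ on the cubic surface is base-point free (indeed very ample or a sum of a very ample class with an effective one), which gives global generation. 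This is the standard del Pezzo surface geometry input and should be routine; alternatively one cites the relevant computation from \cite{BL15} or \cite{BBF+} directly.

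For the Chern character computation: $\ch(\mathcal{E}_D)=\ch(V\otimes\CO_X)-\ch(\CO_Y(D))=\chi(\CO_Y(D))\ch(\CO_X)-\ch(\CO_Y(D))$, where $\chi(\CO_Y(D))=h^0(Y,\CO_Y(D))$ since the higher cohomology vanishes for these positive divisors on the del Pezzo surface. Using $\ch(\CO_Y(D))=\ch(\CO_Y)\cdot\mathrm{ch}(\CO_Y(D)|_Y)$ and the standard exact sequence $0\to\CO_X(-H)\to\CO_X\to\CO_Y\to 0$, one writes $\ch(\CO_Y)=H-\tfrac12 H^2+\tfrac16 H^3$ (using $H^3=3$ on the cubic threefold), and then $\ch_1$ of $\CO_Y(D)$ as a sheaf on $X$ is still $H$, $\ch_2$ shifts by $\deg(D)\cdot[\text{pt-class}]=d\cdot\tfrac{H^2}{3}$-type corrections, and $\ch_3$ by an arithmetic-genus term (here genus $0$). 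Since $E_D=\CH^{-1}(\mathcal{E}_D)$ and $\CH^0(\mathcal{E}_D)=0$ we get $\ch(E_D)=-\ch(\mathcal{E}_D)=\ch(\CO_Y(D))-h^0(\CO_Y(D))\ch(\CO_X)$, and plugging $d=4,5,6$ yields ranks $4,5,6$ and the stated classes $2\ch(I_\ell)+\ch(S(I_\ell))$, $3\ch(I_\ell)+\ch(S(I_\ell))$, $4\ch(I_\ell)+\ch(S(I_\ell))$ respectively, after recalling $\ch(I_\ell)=(1,0,-\tfrac13 H^2,0)$ and $\ch(S(I_\ell))=(2,-H,-\tfrac16 H^2,\tfrac16 H^3)$.

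\textbf{Main obstacle.} The only genuinely non-formal point is the global generation of $\CO_Y(D)$ on the cubic surface $Y$ in each degree — equivalently, that $\CH^0(\mathcal{E}_D)$ vanishes so that $E_D$ is a bundle rather than merely reflexive. Everything else (stability and reflexivity from Lemma~\ref{(BBF+)-sh-constrct-lem}, the Riemann--Roch bookkeeping for $\ch$, the passage from $\mu_H$-stability to $H$-Gieseker stability via the coprimality remark) is mechanical. I expect the global-generation step to follow from a short case analysis of effective divisor classes on a smooth cubic surface, or by directly invoking the relevant lemma in \cite{BL15}/\cite{BBF+}.
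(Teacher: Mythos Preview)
Your proposal is correct and follows essentially the same strategy as the paper: invoke Lemma~\ref{(BBF+)-sh-constrct-lem} for stability and reflexivity, verify global generation of $\CO_Y(D)$ to get local freeness, and compute $\ch(E_D)$ by Riemann--Roch bookkeeping. The paper's only additional concreteness is in computing $h^0(Y,\CO_Y(D))=d$: rather than appealing to del Pezzo divisor classes, it twists the structure sequence of $D\subset Y$ to get $0\to\CO_Y\to\CO_Y(D)\to\CO_D(D)\to 0$, uses adjunction ($D^2=d-2$ since $K_Y=-H$ and $g(D)=0$) to identify $\CO_D(D)\cong\CO_{\PB^1}(d-2)$, and reads off $h^0=1+(d-1)=d$ from $H^1(\CO_Y)=0$; the global generation itself is simply asserted, just as you propose to defer it. (Minor slip: your $\gcd(d+1,1)$ should be $\gcd(d,1)$, though of course either equals $1$.)
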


\begin{proof}
Consider the structure sheaf exact sequence
$$
\xymatrix@C=0.5cm{
0 \ar[r]^{} & \CO_{Y}(-D)
\ar[r]^{} & \CO_{Y}
 \ar[r]^{} & \CO_{D}
 \ar[r]^{} & 0.}
$$
Twisted it by $\CO_{Y}(D)$, we get a short exact sequence
\begin{equation}\label{quartic-curve-SES}
\xymatrix@C=0.5cm{
0 \ar[r]^{} & \CO_{Y}
\ar[r]^{} & \CO_{Y}(D)
 \ar[r]^{} & \CO_{D}(D)
 \ar[r]^{} & 0.}
\end{equation}
Since $K_{Y}=-H$ and $D\subset Y$ is a smooth rational curve of degree $d$ (i.e., $D.H=d$), 
by adjunction formula, we have
$$
-2=D(D+K_{Y})=D^{2}-D.H=D^{2}-d
$$
and thus $D^{2}=d-2$.
It follows that $H^{0}(Y,\CO_{D}(D))\cong H^{0}(\PB^{1},\CO_{\PB^{1}}(d-2)) \cong \CN^{d-1}$.
Consider the long exact sequence of cohomology of \eqref{quartic-curve-SES}, 
since $H^{1}(Y,\CO_{Y})=0$,  we get $H^{0}(X,\CO_{Y}(D))\cong \CN^{d}$. 
Note that $\CO_{Y}(D)$ is globally generated. 
As a result, it follows from \eqref{(BBF+)-sh-constrct} that there is a short exact sequence
\begin{equation}\label{vector-bundle-ED}
\xymatrix@C=0.5cm{
0 \ar[r]^{} & E_{D}
\ar[r]^{} & \CO_{X}^{\oplus d}
 \ar[r]^{} & \CO_{Y}(D)
 \ar[r]^{} & 0.}
\end{equation}
By Lemma \ref{(BBF+)-sh-constrct-lem}, 
$E_{D}$ is a $\mu_{H}$-slope stable vector bundle of rank $d\in \{4,5,6\}$.

Next we will compute the Chern characters of $\CO_{Y}(D)$ and $E_{D}$.
We may assume $\ch(\CO_{D}(D))=(0,0,\frac{d}{3}H^{2},\ch_{3}(\CO_{D}(D)))$.
Since $\chi(\CO_{D}(D))=d-1$, by  the Hirzebruch--Riemann--Roch theorem, we have $\ch_{3}(\CO_{D}(D))=-\frac{1}{3}H^{3}$.
Since $Y$ is a hyperplane section, 
it follows $\ch(\CO_{Y})=(0,H,-\frac{1}{2}H^{2},\frac{1}{6}H^{3})$.
By using \eqref{quartic-curve-SES}, we get
$$
\ch(\CO_{Y}(D))=\ch(\CO_{Y})+\ch(\CO_{D}(D))=(0,H,\frac{2d-3}{6}H^{2},-\frac{1}{6}H^{3}).
$$
Therefore, it follows from \eqref{vector-bundle-ED} that the Chern character of $E_{D}$ is 
$$
\ch(E_{D})=d\ch(\CO_{X})-\ch(\CO_{Y}(D))=(d,-H,-\frac{2d-3}{6}H^{2},\frac{1}{6}H^{3}).
$$
This finishes the proof.
\end{proof}

As a direct consequence, the corresponding Bridgeland moduli spaces are non-empty.

\begin{prop}\label{non-empty-BSCmoduli}
Let  $\sigma$ be an arbitrary Serre-invariant stability condition on $\Ku(X)$.
Then, for $d\in \{4,5,6\}$, the moduli space $M_{\sigma}((d-2)[I_{\ell}]+[S(I_{\ell})])$ is non-empty. 
In particular, it is smooth and projective of dimension $8$, $14$ and $22$, for $d=4,5,6$ respectively.
\end{prop}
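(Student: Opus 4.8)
The plan is to check directly that the $\mu_{H}$-slope stable vector bundle $E_{D}$ of Lemma~\ref{non-empty-lem} already lies in $\Ku(X)$, to show it is stable for the Serre-invariant stability conditions by a wall-crossing argument in tilt stability, and then to read off smoothness, projectivity and the dimension from results recalled in the introduction. Since $\Ku(X)=\{F\in\DC(X)\mid\RHom(\CO_{X},F)=\RHom(\CO_{X}(H),F)=0\}$, the first step is a cohomology computation based on \eqref{vector-bundle-ED} and \eqref{quartic-curve-SES}: the map $H^{0}(X,\CO_{X}^{\oplus d})\to H^{0}(X,\CO_{Y}(D))$ is an isomorphism and $H^{\geq1}(Y,\CO_{Y}(D))=0$ (because $H^{\geq1}(Y,\CO_{Y})=0$ and $H^{\geq1}(\PB^{1},\CO_{\PB^{1}}(d-2))=0$), so $\RHom(\CO_{X},E_{D})=H^{\bullet}(X,E_{D})=0$; twisting \eqref{vector-bundle-ED} and \eqref{quartic-curve-SES} by $-H$ and using $H^{\bullet}(X,\CO_{X}(-H))=0$, the identification $\CO_{D}(D-H)\cong\CO_{\PB^{1}}(D^{2}-D\cdot H)=\CO_{\PB^{1}}(-2)$, the vanishing $H^{0}(Y,\CO_{Y}(-D))=0$, and Serre duality on $Y$, one obtains $\RHom(\CO_{X}(H),E_{D})\cong H^{\bullet-1}(Y,\CO_{Y}(D-H))=0$. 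Hence $E_{D}\in\Ku(X)$, and its numerical class is $v:=(d-2)[I_{\ell}]+[S(I_{\ell})]$ by Lemma~\ref{non-empty-lem}.

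Next I would establish stability. Since $E_{D}$ is $\mu_{H}$-slope stable, $\gcd\bigl(\ch_{0}(E_{D}),\,H^{2}\!\cdot\!\ch_{1}(E_{D})/H^{3}\bigr)=\gcd(d,1)=1$ and $\mu_{H}(E_{D})=-1/d>-\frac{1}{2}$, Proposition~\ref{limit-tilt-stab-Gie-stab-prop} gives $E_{D}\in\Coh^{-1/2}(X)$ with $E_{D}$ $\sigma_{\alpha,-1/2}$-stable for $\alpha\gg0$. The crux of the argument is to propagate this down the ray $\beta=-\frac{1}{2}$ to some $\alpha_{0}\in(0,\frac{1}{2})$: I would show that no numerical wall for $v$ is an actual wall for $E_{D}$ on that ray, using Theorem~\ref{tilt-wall-struct-thm}, the inequalities $0\le\Delta_{H}(A),\Delta_{H}(B)$ and $\Delta_{H}(A)+\Delta_{H}(B)\le\Delta_{H}(E_{D})$ from Remark~\ref{actual-wall-restriction}, Proposition~\ref{new-cond-for-wall}, and the region bound of Proposition~\ref{Li-bound}. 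Because $\Delta_{H}(E_{D})$ is small, these constraints leave only finitely many candidate destabilising subobjects $A$, of bounded rank, slope and discriminant, each of which one then excludes by a $\Hom$- or Euler-characteristic computation. Granting this, $E_{D}$ is $\sigma_{\alpha_{0},-1/2}$-stable; since $\mu_{\alpha_{0},-1/2}(E_{D})<0$ one has $E_{D}[1]\in\Coh^{0}_{\alpha_{0},-1/2}(X)$, hence $E_{D}[1]\in\A(\alpha_{0},-\frac{1}{2})$, and by the construction of $\sigma(\alpha_{0},-\frac{1}{2})$ in Theorem~\ref{thm_U} together with the standard compatibility between tilt stability and $\sigma^{0}_{\alpha,\beta}$-stability (cf.\ \cite{BLMS23,PY22}), $E_{D}[1]$ is a $\sigma(\alpha_{0},-\frac{1}{2})$-stable object of $\Ku(X)$ of class $v$. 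Thus $M_{\sigma(\alpha_{0},-1/2)}(v)\neq\emptyset$, and by Proposition~\ref{one-orbit-unique-Serre-SC} together with the $\tilde{\mathrm{GL}}^{+}_{2}(\RN)$-invariance of the moduli spaces, $M_{\sigma}(v)\neq\emptyset$ for every Serre-invariant stability condition $\sigma$.

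Finally, a non-empty $M_{\sigma}(v)$ is smooth by \cite{PY22} and, being smooth, projective by \cite{VP21}; its dimension equals $1-\chi(v,v)$, where $\chi$ is the Euler pairing on $\mathcal{N}(\Ku(X))=\ZN[I_{\ell}]\oplus\ZN[S(I_{\ell})]$, and with $\chi([I_{\ell}],[I_{\ell}])=\chi([S(I_{\ell})],[S(I_{\ell})])=-1$ and $\chi([I_{\ell}],[S(I_{\ell})])+\chi([S(I_{\ell})],[I_{\ell}])=-1$ one gets $\chi(v,v)=-\bigl((d-2)^{2}+(d-2)+1\bigr)$, hence $\dim M_{\sigma}(v)=(d-2)^{2}+(d-2)+2$, i.e.\ $8$, $14$, $22$ for $d=4,5,6$. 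I expect essentially all the difficulty to sit in the wall-exclusion step of the middle paragraph, which gets heavier as $d$ grows, since both $\Delta_{H}(E_{D})$ and the number of candidate walls for $v$ increase with $d$; a shorter but less self-contained route would be to invoke the non-emptiness statement of \cite{LLPZ24} directly.
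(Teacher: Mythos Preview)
Your overall strategy---show $E_{D}\in\Ku(X)$, establish tilt stability along a vertical ray, and transfer to $\sigma(\alpha,\beta)$-stability---matches the paper's, and your cohomology computation and the final smoothness/projectivity/dimension argument are fine. The substantive difference is the choice of vertical ray. You work at $\beta=-\tfrac{1}{2}$ and propose a case-by-case wall-exclusion that you correctly anticipate becomes heavy as $d$ grows (for $d=6$ one has $\Delta_{H}(E_{D})=171$, so the list of candidate destabilisers is long). The paper instead works at $\beta=-\tfrac{1}{d-1}$, where the whole wall analysis collapses to a one-line observation: since $\ch_{1}$ and $\ch_{0}$ are integral, $H^{2}\!\cdot\!\ch_{1}^{-1/(d-1)}$ takes values in $\tfrac{1}{d-1}\ZN\cdot H^{3}$, and $H^{2}\!\cdot\!\ch_{1}^{-1/(d-1)}(E_{D})=\tfrac{1}{d-1}H^{3}$ is already the minimal positive value, so any proper subobject $A\subset E_{D}$ in $\Coh^{-1/(d-1)}(X)$ has $H^{2}\!\cdot\!\ch_{1}^{-1/(d-1)}(A)\in\{0,\tfrac{1}{d-1}H^{3}\}$, forcing either $A$ or $E_{D}/A$ to have infinite tilt-slope. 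Hence $E_{D}$ is $\sigma_{\alpha,-1/(d-1)}$-stable for all $\alpha>0$ with no further work, and since $(\alpha,-\tfrac{1}{d-1})\in\mathcal{V}$ for small $\alpha$, one passes directly to $\sigma(\alpha,-\tfrac{1}{d-1})$-stability. Your route would succeed in principle, but the paper's choice of $\beta$ is the trick that makes the argument uniform in $d$ and essentially free.
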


\begin{proof}
Let $Y\subset X$ be a smooth hyperplane section.
Suppose that $D$ is a smooth rational curve of degree $d\in \{4,5,6\}$ as in Lemma \ref{non-empty-lem}. 
Note that for $i\in \ZN$, $H^{i}(Y,\CO_{D}(D))\cong H^{i}(\PB^{1},\CO_{\PB^{1}}(d-2))$ and
$H^{i}(Y,\CO_{D}(D-H))\cong H^{i}(\PB^{1},\CO_{\PB^{1}}(-2))$.
Since $E_{D}$ is $\mu_{H}$-slope stable, 
so $\Hom(\CO_{X},E_{D})=0=\Hom(\CO_{X}(H),E_{D})$.
By \eqref{quartic-curve-SES}, 
we have $\mathrm{RHom}(\CO_{X},\CO_{Y}(D))=\CN^{\oplus d}[0]$.
Note that $\mathrm{RHom}(\CO_{X},\CO_{X}^{\oplus d})=\CN^{\oplus d}[0]$.
It follows from \eqref{vector-bundle-ED}  that $H^{i}(X,E_{D})=0$ for all $i\in \ZN$.
Moreover, note that $\mathrm{RHom}(\CO_{X},\CO_{D}(D-H))=\CN[-1]$.
Since $Y\in |H|$ is a hyperplane section, we have $\mathrm{RHom}(\CO_{X},\CO_{Y}(-H))=\CN[-2]$.
It follows from \eqref{vector-bundle-ED} tensorred with $\CO_{X}(-H)$ that $H^{i}(X,E_{D}(-H))=0$ for all $i\in \ZN$.
This means that $E_{D}\in \Ku(X)$. 

By Lemma \ref{non-empty-lem},  
$E_{D}$ is $\mu_{H}$-slope stable and $\mu_{H}(E_{D})=-\frac{1}{d}>-\frac{1}{d-1}$.
Thanks to Proposition \ref{limit-tilt-stab-Gie-stab-prop}, $E_{D}\in \Coh^{-\frac{1}{d-1}}(X)$ and it is $\mu_{\alpha,-\frac{1}{d-1}}$-stable for $\alpha \gg 0$.
Note that $H^{2}\cdot \ch^{-\frac{1}{d-1}}_{1}(E_{D})=\frac{1}{d-1}H^{3}$.
This means that any destabilizing subobject $A\subset E_{D}$ along 
the vertical line $\beta=-\frac{1}{d-1}$ must have $H^{2}\cdot \ch^{-\frac{1}{d-1}}_{1}(A)=0$ or $H^{2}\cdot \ch^{-\frac{1}{d-1}}_{1}(A)=\frac{1}{d-1}H^{3}$.
Therefore, either $A$ or the quotient $E_{D}/A$ have infinite tilt-slope, a contradiction.
As a result, $E_{D}$ is $\mu_{\alpha,-\frac{1}{d-1}}$-stable for all $\alpha>0$.
Since $\sigma^{0}_{\alpha,-\frac{1}{d-1}}$ is a rotation of $\sigma_{\alpha,-\frac{1}{d-1}}$,
we get that $E_{D}$ is $\sigma^{0}_{\alpha,-\frac{1}{d-1}}$-stable.
Since $E_{D}\in \Ku(X)$, it follows that $E_{D}$ is $\sigma(\alpha,-\frac{1}{d-1})$-stable.
As a consequence, by Proposition \ref{one-orbit-unique-Serre-SC}, 
we obtain that $E_{D}$ is $\sigma$-stable.
It follows that the moduli space $M_{\sigma}((d-2)[I_{\ell}]+[S(I_{\ell})])$ is non-empty.
By \cite[Theorem 1.2]{PY22}, the moduli space $M_{\sigma}((d-2)[I_{\ell}]+[S(I_{\ell})])$ is smooth of dimension $d^{2}-3d+4$ ($d=4,5,6$).
It follows from \cite[Corollary 3.4]{VP21} that it is projective (see also \cite[Theorem 2.4]{BM23}).
\end{proof}

\begin{rem} 
For any Serre-invariant stability condition $\sigma$ on $\Ku(X)$,
the Serre functor $S$ of $\Ku(X)$ induces the following isomorphisms of moduli spaces of $\sigma$-stable objects in $\Ku(X)$:
\begin{equation*} 
\xymatrix@C=0.5cm{
      && M_{\sigma}(2[I_{\ell}]+[S(I_{\ell})]) \ar[ld]^{S}_{\simeq}  &&     \\
   & M_{\sigma}(3[S(I_{\ell})]-[I_{\ell}]) \ar[rr]_{S[1]}^{\simeq}  &&  M_{\sigma}(3[I_{\ell}]-2[S(I_{\ell})]) \ar[ul]^{S}_{\simeq}  }
\end{equation*}
\begin{equation*} 
\xymatrix@C=0.5cm{
      && M_{\sigma}(3[I_{\ell}]+[S(I_{\ell})]) \ar[ld]^{S}_{\simeq}  &&     \\
   & M_{\sigma}(4[S(I_{\ell})]-[I_{\ell}]) \ar[rr]_{S[1]}^{\simeq}  &&  M_{\sigma}(4[I_{\ell}]-3[S(I_{\ell})]) \ar[ul]^{S}_{\simeq}  }
\end{equation*}
\begin{equation*} 
\xymatrix@C=0.5cm{
      && M_{\sigma}(4[I_{\ell}]+[S(I_{\ell})]) \ar[ld]^{S}_{\simeq}  &&     \\
   & M_{\sigma}(5[S(I_{\ell})]-[I_{\ell}]) \ar[rr]_{S[1]}^{\simeq}  &&  M_{\sigma}(5[I_{\ell}]-4[S(I_{\ell})]) \ar[ul]^{S}_{\simeq}  }
\end{equation*}
\end{rem}

\begin{rem}
In general, for any two coprime numbers $a$ and $b$, 
the non-emptiness of the moduli space $M_{\sigma}(a[I_{\ell}]+b[S(I_{\ell})])$ is obtained in \cite[Theorem 1.1]{LLPZ24}.
\end{rem}


\section{Proof of Theorem \ref{mainthm-iso}}\label{Proof-main-result}
 
This section is devoted to the proof of Theorem \ref{mainthm-iso}.
For the convenience, 
we denote by the semicircle
$$
W(\mathrm{c},\mathrm{r})
:=
\{(\alpha,\beta) \in \RN_{>0}\times \RN \mid (\beta-\mathrm{c})^{2}+\alpha^{2}=\mathrm{r}^{2}\}
$$
in the $(\alpha,\beta)$-plane, where $\mathrm{c} \in \RN$ and $\mathrm{r}\in \RN_{>0}$.

\subsection{Preparatory lemmas}

Let us begin with determining the walls of tilt semistable objects with Chern character $\nu=(4,-H,-\frac{5}{6}H^{2},\frac{1}{6}H^{3})$.

\begin{lem}\label{mainwall1-comput}
There are no walls to the left of the vertical wall $\beta=-\frac{1}{4}$ for tilt semistable objects $E$ 
with Chern character $\ch_{\leq 2}(E)=(4,-H,-\frac{5}{6}H^{2})$.
\end{lem}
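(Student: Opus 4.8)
The goal is to rule out actual walls in the region $\beta < -\frac{1}{4}$ for objects $E$ with $\ch_{\leq 2}(E)=(4,-H,-\frac{5}{6}H^2)$. First I would record the invariants: the vertical wall is $\beta=\mu_H(E)=-\frac14$, and $\Delta_H(E) = (H^2\cdot\ch_1)^2 - 2(H^3\cdot\ch_0)(H\cdot\ch_2) = 1 - 2\cdot 4\cdot(-\frac56) = 1 + \frac{20}{3} = \frac{23}{3}$. By Theorem \ref{tilt-wall-struct-thm}, any wall left of the vertical wall is a semicircle $W(\mathrm{c},\mathrm{r})$ whose apex lies on the branch of the hyperbola $\mu_{\alpha,\beta}(\nu)=0$ with $\beta<-\frac14$, so in particular every point of such a semicircle has $\beta<-\frac14$; it suffices to show the largest such semicircle (equivalently, the one passing through a destabilizing short exact sequence with the mildest constraints) cannot be an actual wall.

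\textbf{Key steps.} Suppose $0\to A\to E\to B\to 0$ is a destabilizing sequence in $\Coh^\beta(X)$ along such a wall $W$, with all three objects $\sigma_{\alpha,\beta}$-semistable at a point of $W$. By Proposition \ref{new-cond-for-wall}(1), after possibly swapping, I may assume $\ch_0(A)>0$ and $\mu_H(A)\le\mu_H(E)=-\frac14$; write $\ch_0(A)=a$ with $1\le a\le 3$ (if $a\geq 4$ then $\ch_0(B)\le 0$, and one analyzes $B$ instead — handle that symmetric case too, noting $\ch_0(B)\geq 0$ in $\Coh^\beta(X)$ forces $\ch_0(B)\in\{0,1,2,3\}$). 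Using Remark \ref{actual-wall-restriction}, I have $\Delta_H(A)\ge0$, $\Delta_H(B)\ge0$, and $\Delta_H(A)+\Delta_H(B)\le\Delta_H(E)=\frac{23}{3}$. The wall $W$ is the locus $\mu_{\alpha,\beta}(A)=\mu_{\alpha,\beta}(E)$, which together with $\mu_H(A)\leq -\frac14$ and Proposition \ref{new-cond-for-wall}(2) gives $\beta_-(E) < \beta_-(A) \le \mu_H(A) < -\frac14$; combined with $\beta_-(E) = -\frac14 - \sqrt{\Delta_H(E)}/(H^3\ch_0(E)) = -\frac14 - \frac{1}{4}\sqrt{23/3}$, this pins down a narrow numerical window for $(H^2\cdot\ch_1(A), H\cdot\ch_2(A))$. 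The plan is then to enumerate: for each $a\in\{1,2,3\}$ (and the mirror cases for $B$), the integrality of $H^2\cdot\ch_1(A)$, the slope bound $\mu_H(A)\le-\frac14$, the discriminant bounds, and Proposition \ref{Li-bound} applied to the $\sigma_{\alpha,\beta}$-stable factors of $A$ (which constrains $\widetilde v_H$ to lie outside $R_{1/4}$), together leave no consistent choice — i.e. every candidate either violates $\Delta_H(A)\ge0$, or $\Delta_H(A)+\Delta_H(B)\le\Delta_H(E)$, or forces $\widetilde v_H(A)$ into the forbidden region, or produces a wall not actually contained in $\beta<-\frac14$.

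\textbf{Main obstacle.} The delicate part is the finite but genuine case analysis in the step above: the bound $\Delta_H(E)=\frac{23}{3}$ is not tiny, so for the small ranks $a=1,2$ there are several a priori admissible $(\ch_1,\ch_2)$ pairs, and ruling each out cleanly requires combining the discriminant inequalities with the geometric constraint from Proposition \ref{Li-bound} (and possibly with the observation that $A,B\in\Coh^\beta(X)$ forces sign conditions on $\ch_1^\beta$). I expect that the sharpest tool is the nestedness of walls together with Proposition \ref{new-cond-for-wall}(2): since any actual wall left of $\beta=-\frac14$ would be nested inside the semicircle through $\beta_-(E)$, and $\beta_-(E)$ is strictly less than $-\frac14$, showing the would-be destabilizer $A$ cannot have $\beta_-(A)$ in the forced interval is the crux. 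Once the rank-$1$ and rank-$2$ subcases are eliminated (the rank-$3$ case being lighter since then $B$ has rank $1$ and is highly constrained), the lemma follows.
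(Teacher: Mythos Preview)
Your overall strategy --- set up a destabilizing sequence, normalize so that $A$ has positive rank and $\mu_H(A)\le\mu_H(E)$, then bound the invariants of $A$ via discriminant inequalities and Proposition~\ref{Li-bound} --- matches the paper's. But there are two genuine gaps.

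First, the rank bound is wrong. Your claim that ``$\ch_0(B)\ge 0$ in $\Coh^\beta(X)$'' is false: objects of $\Coh^\beta(X)$ can have arbitrary (including negative) rank, so Proposition~\ref{new-cond-for-wall}(1) does \emph{not} force $a\le 3$. In the paper, one actually has to treat $a\in\{1,2,3,4,5\}$; the bound $a\le 5$ comes only after a separate argument: for $a\ge 6$ the inequalities $\beta_-(E)<\beta_-(A)\le\mu_H(A)<\mu_H(E)$ together with the $\ch_1^{\beta_-(E)}$ constraint force $-1<\beta_-(A)<\beta_+(A)<0$, which puts $\widetilde v_H(A)$ inside the forbidden region $R_{1/4}$, contradicting Proposition~\ref{Li-bound}. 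So the cases $a=4,5$ must be worked out individually.

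Second --- and this is the real obstacle --- the enumeration is \emph{not} closed by discriminant bounds and Proposition~\ref{Li-bound} alone. Two cases survive all those numerical tests and produce honest nonempty semicircles on the correct side: $\ch_{\le 2}(A)=(3,-H,-\tfrac16 H^2)$ when $a=3$, and $\ch_{\le 2}(A)=(5,-2H,-\tfrac13 H^2)$ when $a=5$. In each case the paper argues that the would-be destabilizer $A$ is in fact $\sigma_{\alpha,\beta}$-semistable for $\alpha\gg 0$ along a suitable vertical line (because the candidate wall is tangent to, or crosses, a line where $A$ has no walls), hence by Proposition~\ref{limit-tilt-stab-Gie-stab-prop} is a $\mu_H$-slope stable sheaf with those invariants --- and then invokes a separate non-existence result: for $a=3$ this is \cite[Proposition~6.6]{BBF+}, and for $a=5$ it is Lemma~\ref{auxiliary-lem2} (whose proof in turn rests on Lemmas~\ref{auxiliary-lem1-1} and~\ref{auxiliary-lem1-2}). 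So contrary to your expectation, the $a=3$ case is not ``lighter''; it and the $a=5$ case are precisely where the purely numerical approach breaks down and external input is needed.
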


\begin{proof}
Suppose that there is such an actual wall along $\beta<-\frac{1}{4}$ for $E$ induced by the short exact sequence
$$
\xymatrix@C=0.5cm{
0 \ar[r] & A \ar[r] & E \ar[r] & B\ar[r] & 0
}
$$
of $\sigma_{\alpha,\beta}$-semistable objects in $\Coh^{\beta}(X)$.
We set $\ch_{\leq 2}(A):=(a,bH,\frac{c}{6}H^{2})$ with $a,b,c\in \ZN$.
Since $\ch_{0}(E)=4>0$, based on Proposition \ref{new-cond-for-wall}, 
by changing the roles of $A$ and $B$, 
we may assume that $\ch_{0}(A)>0$ and $\mu_{H}(A)<  \mu_{H}(E)$ (i.e., $a>0$ and $\frac{b}{a}< -\frac{1}{4}$).
Since $0\leq \Delta_{H}(A)\leq \Delta_{H}(E)$, we get
\begin{equation}\label{mainwall1-delta}
3b^{2}-23\leq ac\leq 3b^{2}. 
\end{equation}

Note that $\beta_{-}(E)=-\frac{1}{4}(1+\frac{\sqrt{69}}{3})>-1$ and all numerical walls to the left of the vertical wall intersect with the vertical line $\beta=\beta_{-}(E)$.
Along $\beta=\beta_{-}(E)$, since $A, E \in \Coh^{\beta_{-}(E)}(X)$, we get
$$
0<H^{2}\cdot \ch^{\beta_{-}(E)}_{1}(A)=(b-\beta_{-}(E) a) H^{3} < H^{2}\cdot \ch^{\beta_{-}(E)}_{1}(E).
$$
This gives the following inequality
\begin{equation}\label{A-mu-slope-ineq}
-\frac{1}{4}(1+\frac{\sqrt{69}}{3})<\mu_{H}(A)<\frac{\sqrt{69}}{3a}-\frac{1}{4}(1+\frac{\sqrt{69}}{3}).
\end{equation}
By Proposition \ref{new-cond-for-wall}, we have
$
-1<\beta_{-}(E)<\beta_{-}(A)\leq \mu_{H}(A)<\mu_{H}(E).
$
If $a\geq 6$, it follows that 
$$
-1<\beta_{-}(A) < \beta_{+}(A)=2\mu_{H}(A)-\beta_{-}(A)< \frac{2\sqrt{69}}{3a}-\frac{1}{4}(1+\frac{\sqrt{69}}{3})<0.
$$
Moreover, since $-1<\mu_{H}(A)<-\frac{1}{4}$,
then the point $\widetilde{v}_{H}(A)$
is in the open region $R_{\frac{1}{4}}$.
This is a contradiction with Proposition \ref{Li-bound}.
Therefore, we get $a \in \{ 1,2,3,4,5\}$.
We will rule out all the possibilities case-by-case.

{\bf Case 1:}  
If $a=5$, by \eqref{A-mu-slope-ineq}, it follows that $b\in \{-2,-3,-4\}$.
(i)  If $b=-4$, then $c$ must be even.
By Proposition \ref{Li-bound} and \eqref{mainwall1-delta}, we get $c\in \{6, 8\}$.
In both cases, the numerical wall $W(E,A)$ is empty.         
 (ii)  If $b=-3$, then $c$ must be odd.
By Proposition \ref{Li-bound} and \eqref{mainwall1-delta},  
we have $c=1$.
Then the numerical wall $W(E,A)$ is  empty. 
(iii) If $b=-2$,  then $c$ must be even.  
By Proposition \ref{Li-bound} and \eqref{mainwall1-delta}, we get $c=-2$ and thus $\ch_{\leq 2}(A)=(5,-2H,-\frac{1}{3}H^{2})$.
By a direct computation, 
we have the numerical wall $W(E,A)=W(-\frac{17}{18},\frac{1}{18})$.
By Lemma \ref{auxiliary-lem1-1} below, $A$ has no walls along $\beta=-1$.
Since the vertical line $\beta=-1$ tangents to $W(E,A)$ and $\beta_{-}(A)>-1$,
so $A$ is $\sigma_{\alpha,\beta_{-}(E)}$-semistable for $\alpha\gg 0$.
By Proposition \ref{limit-tilt-stab-Gie-stab-prop}, we obtain that $A$ is a $\mu_{H}$-slope stable sheaf. 
This yields a contradiction with Lemma \ref{auxiliary-lem2} below.

{\bf Case 2:}  
If $a=4$, it follows from \eqref{A-mu-slope-ineq} that $b\in \{-2,-3\}$.
(i)  If $b=-2$, then $c$ is even.
            By Proposition \ref{Li-bound} and \eqref{mainwall1-delta}, we obtain $c= -2$. 
            Then the numerical wall $W(E,A)$ is empty.  
(ii)If $b=-3$, then $c$ is odd.
  By Proposition \ref{Li-bound} and \eqref{mainwall1-delta}, we have $c\in \{1,3,5\}$.
  Then all the numerical walls $W(E,A)$ are empty.

{\bf Case 3:}   
If $a=3$,  by \eqref{A-mu-slope-ineq},  it follows that $b\in \{-2,-1\}$.
(1) If $b=-2$, then $c$ must be even.
By Proposition \ref{Li-bound} and \eqref{mainwall1-delta}, we get $c\in \{-2,0,2\}$
Then all the numerical walls $W(E,A)$ are empty.  
(2) If $b=-1$, then $c$ must be odd. 
 By Proposition \ref{Li-bound} and \eqref{mainwall1-delta},  
 we have $c\in \{-5,-3,-1\}$.
 \begin{enumerate}
\item[(i)] If $c=-1$, then $\ch_{\leq 2}(A)=(3,-H,-\frac{1}{6}H^{2})$. 
Since $H^{2}\cdot\ch^{-\frac{1}{2}}_{1}(A)=\frac{1}{2}H^{3}$, 
hence $A$ has no walls along $\beta=-\frac{1}{2}$.
Note that the numerical wall $W(E,A)=W(-\frac{11}{6},\frac{\sqrt{73}}{6})$ intersects with $\beta=-\frac{1}{2}$. 
It follows that $A$ is $\sigma_{\alpha,-\frac{1}{2}}$-semistable for $\alpha\gg 0$.
By Proposition \ref{limit-tilt-stab-Gie-stab-prop}, $A$ is a $\mu_{H}$-slope stable sheaf with Chern character $\ch_{\leq 2}(A)=(3,-H,-\frac{1}{6}H^{2})$.
This is a contradiction with \cite[Proposition 6.6]{BBF+}.
\item[(ii)] If $c=-3$,  the numerical wall $W(E,A)$ is empty. If $c=-5$,  the numerical wall $W(E,A)$ is on the right side of the vertical wall.
\end{enumerate}

{\bf Case 4:}   
If $a=2$, by \eqref{A-mu-slope-ineq} and $\mu_{H}(A)<\mu_{H}(E)$, we have $b=-1$ and thus $c$ is odd. 
By Proposition \ref{Li-bound} and \eqref{mainwall1-delta},
we get $-5\leq c\leq -1$ and $c$ is odd.
If $c=-1$, then the numerical wall $W(E,A)$ is empty.
If $-5\leq c\leq -3$, then the numerical wall $W(E,A)$ is on the right side of the vertical wall.

{\bf Case 5:}  
If $a=1$, by \eqref{A-mu-slope-ineq}, we get $b=0,1$, which contradicts $\mu_{H}(A)<\mu_{H}(E)$.
This completes the proof of Lemma \ref{mainwall1-comput}.
\end{proof}

In the next, we will prove Lemma \ref{auxiliary-lem1-1} and Lemma \ref{auxiliary-lem2},
which are used in the proof of Lemma \ref{mainwall1-comput}.

\begin{lem}\label{auxiliary-lem1-1}
There are no walls along the vertical lines $\beta=-1$ for tilt semistable objects $E$ with Chern character $\ch_{\leq 2}(E)=(5,-2H,\ch_{2}(E))$, where $\ch_{2}(E)=-\frac{1}{3}H^{2}$ or $\ch_{2}(E)=-\frac{2}{3}H^{2}$.
\end{lem}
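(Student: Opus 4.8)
The plan is to argue by contradiction in the same style as Lemma \ref{mainwall1-comput}: suppose a wall along $\beta=-1$ exists, realized by a short exact sequence $0\to A\to E\to B\to 0$ of $\sigma_{\alpha,-1}$-semistable objects in $\Coh^{-1}(X)$. Along the vertical line $\beta=-1$ the relevant invariant is $H^{2}\cdot\ch_{1}^{-1}(\,\cdot\,)$, and for $E$ one computes $H^{2}\cdot\ch_{1}^{-1}(E)=(-2+5)H^{3}=3H^{3}$. Since $A,B\in\Coh^{-1}(X)$ both have $H^{2}\cdot\ch_{1}^{-1}\geq 0$ and the values add to $3H^{3}$, the only possibility giving a genuine (finite-slope) wall is the partition into $1H^{3}$ and $2H^{3}$; the partitions $0+3$ force one factor to have infinite tilt-slope, contradicting $\mu_{\alpha,-1}(A)=\mu_{\alpha,-1}(E)=\mu_{\alpha,-1}(B)$. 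Write $\ch_{\leq 2}(A)=(a,bH,\tfrac{c}{6}H^{2})$; then $b-a\in\{1,2\}$ in appropriate units, i.e. $H^{2}\cdot\ch_{1}^{-1}(A)=(b+a)H^{3}\in\{H^{3},2H^{3}\}$, so $a+b\in\{1,2\}$.

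Next I would impose the Bogomolov-type restrictions from Remark \ref{actual-wall-restriction}: $0\le\Delta_{H}(A)\le\Delta_{H}(E)$ and $0\le\Delta_{H}(B)\le\Delta_{H}(E)$, with $\Delta_{H}(E)=(H^{2}\ch_{1}(E))^{2}-2H^{3}\ch_{0}(E)\cdot H\ch_{2}(E)=4-2\cdot5\cdot\ch_{2}$, which equals $4+\tfrac{10}{3}\cdot 1$ or $4+\tfrac{10}{3}\cdot 2$ depending on the case — a small constant in each case. Combined with $a+b\in\{1,2\}$ and the fact that the wall $W(E,A)$ must actually intersect the line $\beta=-1$, this pins $(a,b)$ down to a short finite list. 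For each candidate $(a,b,c)$ I would then either (i) check directly that the numerical semicircle $W(E,A)$ is empty or lies entirely to the right of $\beta=-1$ (so it is irrelevant), or (ii) in the remaining borderline cases, use Proposition \ref{Li-bound}: the point $\widetilde v_{H}(A)=(b/a,\,c/(6a))$ must avoid the open region $R_{1/4}$, and the apex of the wall lies on the hyperbola $\mu_{\alpha,\beta}=0$, which forces $\widetilde v_{H}(A)$ (or $\widetilde v_{H}(B)$) into $R_{1/4}$ — a contradiction. One should also recall that $\widetilde v_{H}$ on the boundary of $R_{1/4}$ forces $\ch_{0}=\pm1,\pm2$, which further trims the list.

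The main obstacle I anticipate is the same as in Lemma \ref{mainwall1-comput}: a handful of stubborn cases where the numerical wall is a legitimate semicircle crossing $\beta=-1$ and $\widetilde v_{H}(A)$ sits exactly on or just outside the boundary of $R_{1/4}$, so Proposition \ref{Li-bound} alone does not immediately kill it. In those cases the plan is to push the destabilizing subobject $A$ (or quotient $B$) to the large-volume limit via Proposition \ref{limit-tilt-stab-Gie-stab-prop} — after checking $A$ itself has no wall between $\beta=-1$ and the apex — concluding that $A$ is a $\mu_{H}$-slope semistable sheaf with the given small Chern character, and then contradicting a classification result such as \cite[Proposition 6.6]{BBF+} (or a direct $\mu_{H}$-stability/Bogomolov argument for that Chern character). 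Since $\ch_{0}(E)=5$ and the slope $\mu_{H}(E)=-2/5$ is close to the lattice of walls, I expect only one or two such cases, each dispatched by citing the relevant low-rank classification already used in the proof of Lemma \ref{mainwall1-comput}.
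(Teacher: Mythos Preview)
Your approach is essentially the same as the paper's: set up the destabilizing sequence on $\beta=-1$, use the constraint $0<H^{2}\cdot\ch_{1}^{-1}(A)<3H^{3}$ together with the Bogomolov bounds $0\le\Delta_{H}(A),\Delta_{H}(B)\le\Delta_{H}(E)$ and the equal-slope condition to produce a finite list, then kill each candidate. Two small remarks. First, a notational slip: with your untwisted convention $\ch_{\leq 2}(A)=(a,bH,\tfrac{c}{6}H^{2})$ one has $H^{2}\cdot\ch_{1}^{-1}(A)=(a+b)H^{3}$, not $(b-a)H^{3}$; the paper sidesteps this by working directly in twisted coordinates $\ch^{-1}_{\leq 2}(A)$ and normalizing via the swap $A\leftrightarrow B$ so that $\ch_{0}(A)\ge 3$, which keeps the bookkeeping cleaner. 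Second, and more substantively, the ``stubborn cases'' you anticipate do not occur: once you also impose the equal-slope equation $\mu_{\alpha,-1}(A)=\mu_{\alpha,-1}(E)$ (which gives a linear relation between $a,b,c$ and $\alpha^{2}$) and the integrality of $c_{2}(A)$, every surviving candidate has either $\widetilde{v}_{H}(A)$ or $\widetilde{v}_{H}(B)$ lying in the open region $R_{1/4}$, so Proposition~\ref{Li-bound} alone disposes of all of them. No push to the large-volume limit and no appeal to \cite[Proposition~6.6]{BBF+} is needed here; that extra machinery is what makes Lemma~\ref{mainwall1-comput} harder, but Lemma~\ref{auxiliary-lem1-1} is genuinely easier.
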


\begin{proof}
Suppose there is such a wall induced by a short exact sequence 
$$
\xymatrix@C=0.5cm{
0 \ar[r] & A \ar[r] & E \ar[r] & B\ar[r] & 0
}
$$
of $\sigma_{\alpha,-1}$-semistable objects in $\Coh^{-1}(X)$.
We write $\ch^{-1}_{\leq 2}(A):=(a,bH,\frac{c}{6}H^{2})$, where $a,b,c \in \ZN$.
Note that $\ch^{-1}_{\leq 1}(E)=(5,3H)$.
By $0< H^{2}\cdot \ch^{-1}_{1}(A)=bH^{3}<H^{2}\cdot \ch^{-1}_{1}(E)=3H^{3}$, 
we get $b=1$ or $2$.
By changing the roles of $A$ and $B$, we may assume $a\geq 3$.
Note that $\ch_{1}(A)=(b-a)H$ and $\ch_{2}(A)=\frac{c-6(b-a)-3a}{6}H^{2}$.
It follows that 
$c_{2}(A)=\frac{\ch_{1}(A)^{2}}{2}-\ch_{2}(A)=\frac{3(b-a)^{2}+6(b-a)+3a-c}{6}H^{2}\in \frac{\ZN}{3}H^{2}$.

{\bf Case 1: $\ch_{2}(E)=-\frac{1}{3}H^{2}$}.
According to $0\leq \Delta_{H}(A)\leq \Delta_{H}(E)$ and $\mu_{\alpha,-1}(A)=\mu_{\alpha,-1}(E)$, 
we obtain $3b^{2}-22\leq ac\leq 3b^{2}$ and $(9a-15b)\alpha^{2}=3c-b$.
Combining with all the above restrictions, 
by a straightforward computation, 
we obtain the following possible cases:

(i) If $b=1$, then $a=3$ and $c=1$.
Thus $\ch_{\leq 2}(A)=(3,-2H,\frac{2}{3}H^{2})$.
 
(ii) Suppose $b=2$. 
\begin{enumerate}
\item If $a=3$, then $c=-2$.
Then  $\ch_{\leq 2}(A)=(3,-H,-\frac{5}{6}H^{2})$ and thus $\ch_{\leq 2}(B)=(2,-H,\frac{1}{2}H^{2})$.
\item If $a=3$, then $c=0$.
Then  $\ch_{\leq 2}(A)=(3,-H,-\frac{1}{2}H^{2})$ and thus  $\ch_{\leq 2}(B)=(2,-H,\frac{1}{6}H^{2})$.
\item If $a=4$, then $c=2$ and thus $\ch_{\leq 2}(A)=(4,-2H,\frac{1}{3}H^{2})$.
\item If $a=5$, then $c=2$ and thus $\ch_{\leq 2}(A)=(5,-3H,\frac{5}{6}H^{2})$.
\item If $a=6$, then $c=2$ and thus $\ch_{\leq 2}(A)=(6,-4H,\frac{4}{3}H^{2})$.\end{enumerate}

{\bf Case 2: $\ch_{2}(E)=-\frac{2}{3}H^{2}$}.
By $0\leq \Delta_{H}(A)\leq \Delta_{H}(E)$ and $\mu_{\alpha,-1}(A)=\mu_{\alpha,-1}(E)$, we obtain $3b^{2}-32\leq ac \leq 3b^{2}$ and $3(3a-5b)\alpha^{2}=3c+b$. By a straightforward computation, we obtain the following possible cases:

(i) If $b=1$, then $a=3$ and $c=1$. Thus $\ch_{\leq 2}(A)=(3,-2H,\frac{2}{3}H^{2})$.

(ii) Suppose $b=2$ and $a=3$.
\begin{enumerate}
\item[(1)] If $c=-2$, then $\ch_{\leq 2}(A)=(3,-H,-\frac{5}{6}H^{2})$ and thus $\ch_{\leq 2}(B)=(2,-H,\frac{1}{6}H^{2})$.
\item[(2)] If $c=-4$, then $\ch_{\leq 2}(A)=(3,-H,-\frac{7}{6}H^{2})$ and thus $\ch_{\leq 2}(B)=(2,-H,\frac{1}{2}H^{2})$. 
\item[(3)] If $c=-2$, then $\ch_{\leq 2}(A)=(3,-H,-\frac{3}{2}H^{2})$ and thus $\ch_{\leq 2}(B)=(2,-H,\frac{5}{6}H^{2})$.
\end{enumerate}

(iii) Suppose $b=2$ and $a\geq 4$.
\begin{enumerate}
\item[(1)] If $c=0$, then $\ch_{\leq 2}(A)=(a,(2-a)H,\frac{a-4}{2}H^{2})$.
\item[(2)] If $c=2$, then $\ch_{\leq 2}(A)=(a,(2-a)H,\frac{3a-10}{6}H^{2})$.
\end{enumerate}
All the above cases are ruled out by Proposition \ref{Li-bound}.
\end{proof}

To prove Lemma \ref{auxiliary-lem2}, we also need the following 

\begin{lem}\label{auxiliary-lem1-2}
There are no walls along the vertical lines $\beta=0$ for tilt semistable objects $E$ with Chern character $\ch_{\leq 2}(E)=(-5,2H,\frac{1}{3}H^{2})$. 
\end{lem}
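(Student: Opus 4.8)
The plan is to mimic exactly the structure of the proof of Lemma~\ref{auxiliary-lem1-1}: assume a destabilizing short exact sequence $0\to A\to E\to B\to 0$ of $\sigma_{\alpha,0}$-semistable objects in $\Coh^{0}(X)$ along the vertical line $\beta=0$, extract the finitely many numerical possibilities for $\ch_{\leq 2}(A)$ from the inequalities $0\le\Delta_H(A)\le\Delta_H(E)$ together with the equality of tilt slopes, and then rule out each one using Proposition~\ref{Li-bound}. First I would record that $\ch_{\leq 1}^{0}(E)=(-5,2H)$, so the torsion-pair/tilt heart constraint gives $0<H^{2}\cdot\ch_{1}^{0}(A)<H^{2}\cdot\ch_{1}^{0}(E)$, i.e. writing $\ch^{0}_{\leq 2}(A)=(a,bH,\tfrac{c}{6}H^{2})$ (here $\beta=0$ so twisted and untwisted Chern characters agree) we get $0<b<2$, hence $b=1$. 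Note $\ch_0(E)=-5<0$; applying Proposition~\ref{new-cond-for-wall} after possibly swapping $A$ and $B$ (and noting one of the two has negative rank while the total rank is $-5$), one may normalize the sign of $a$; the cleanest bookkeeping is to work with $-E$, which has $\ch_{\leq 2}(-E)=(5,-2H,-\tfrac{1}{3}H^{2})$, so that this lemma is literally the statement of Lemma~\ref{auxiliary-lem1-1} Case~1 but with the vertical wall at $\beta=0$ instead of $\beta=-1$.

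Concretely, after this reduction I would compute $\Delta_H(E)=(H^2\cdot\ch_1)^2-2(H^3\ch_0)(H\cdot\ch_2)=4-2\cdot 5\cdot(-\tfrac13\cdot\tfrac{?}{})$; more precisely with $\ch(E)=(-5,2H,\tfrac13H^2,\dots)$ one gets $\Delta_H(E)=4+2\cdot 5\cdot\tfrac13\cdot(\text{degree normalisation})$, a small explicit integer, and then $0\le \Delta_H(A)\le\Delta_H(E)$ becomes a bound of the shape $3b^2 - N \le ac \le 3b^2$ with $b=1$. Combined with the slope equality $\mu_{\alpha,0}(A)=\mu_{\alpha,0}(E)$ — which forces a relation of the form $(\text{linear in }a,b)\,\alpha^2=(\text{linear in }b,c)$ and hence both sides have the same sign for some $\alpha>0$ — and the tilt-heart bound $0<b<\text{(rank of }E)$ after sign-normalisation, one is left with a handful of candidates for $(a,b,c)$, exactly paralleling cases (i)--(ii) in the proof of Lemma~\ref{auxiliary-lem1-1}. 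For each candidate I would then check that the point $\widetilde v_H(A)=\bigl(\tfrac{H^2\ch_1(A)}{H^3\ch_0(A)},\tfrac{H\ch_2(A)}{H^3\ch_0(A)}\bigr)$ (for $A$, or for $B$, whichever has nonzero rank and the appropriate slope) lies in the forbidden region $R_{\frac14}$, contradicting Proposition~\ref{Li-bound}; in the borderline cases where $\widetilde v_H$ lands on $\partial R_{\frac14}$, Proposition~\ref{Li-bound} forces $\ch_0=\pm1,\pm2$, which one checks is incompatible with the ranks produced here, or else the resulting object would be a $\mu_H$-slope stable sheaf of a Chern character already excluded by \cite[Proposition~6.6]{BBF+} or Lemma~\ref{auxiliary-lem2}.

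The main obstacle I anticipate is the same one flagged in the paper: not the individual Proposition~\ref{Li-bound} checks, which are mechanical, but making sure the enumeration of destabilizing classes is genuinely complete — in particular handling the sign conventions correctly (since $\ch_0(E)<0$, the "subobject" $A$ in $\Coh^{0}(X)$ need not have positive rank, and Proposition~\ref{new-cond-for-wall} must be invoked on the object of positive rank among $\{-A,-B,-E\}$ or equivalently on $-E$), and not overlooking candidates where $\ch_0(A)=0$ (a torsion subobject, which along a vertical line forces $H^2\ch_1(A)=0$ or $=H^2\ch_1(E)$, i.e. one of $A$, $B$ has infinite tilt slope — a contradiction, exactly as in the last paragraph of the proof of Proposition~\ref{non-empty-BSCmoduli}). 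Once the sign normalisation to $-E$ is in place the lemma reduces to a finite check essentially identical to Lemma~\ref{auxiliary-lem1-1}, and I would present it that way to avoid repeating the bookkeeping.
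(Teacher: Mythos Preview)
Your overall strategy --- assume a destabilizing sequence at $\beta=0$, pin down $b=1$ from $0<H^{2}\ch_{1}(A)<H^{2}\ch_{1}(E)$, bound $ac$ via $0\le\Delta_H(A)\le\Delta_H(E)$, use the slope equality to fix signs, then kill the survivors with Proposition~\ref{Li-bound} --- is exactly what the paper does. But two of the devices you propose are either unnecessary or wrong, and they obscure how short the argument actually is.

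First, there is no ``work with $-E$'' move: $-E$ is not an object, and Proposition~\ref{new-cond-for-wall} applies only when $\ch_0(E)>0$, so invoking it here is illegitimate. The paper's normalization is much simpler: since $\ch_0(A)+\ch_0(B)=-5$, one of $A,B$ has rank $\le -3$; swap so that $a\le -3$. This also absorbs your worry about $\ch_0(A)=0$ (if $a=0$ then $\ch_0(B)=-5\le-3$, so swap), and your parenthetical claim that a rank-zero subobject forces $H^2\ch_1(A)\in\{0,H^2\ch_1(E)\}$ is false in general --- that argument from Proposition~\ref{non-empty-BSCmoduli} works only when $H^2\ch_1^\beta(E)$ is already minimal, which is not the case here.

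Second, once $b=1$ and $a\le -3$ are in place, the computation you leave vague is genuinely tiny. One has $\Delta_H(E)=(2H^3)^2-2(-5H^3)(\tfrac{1}{3}H^3)=\tfrac{22}{3}(H^3)^2$, giving $-19\le ac\le 3$; the slope equality $\mu_{\alpha,0}(A)=\mu_{\alpha,0}(E)$ reads $(6a+15)\alpha^2=2(c-1)$, and since $a\le-3$ the left side is negative, so $c\le 0$. Hence $ac\ge 0$, so $0\le ac\le 3$. Integrality of $c_2(A)$ forces $c$ odd (as $b=1$), so $c=-1$ and then $a=-3$. The single surviving class $\ch_{\le 2}(A)=(-3,H,-\tfrac{1}{6}H^2)$ has $\widetilde v_H(A)=(-\tfrac{1}{3},\tfrac{1}{18})\in R_{\frac14}$, contradicting Proposition~\ref{Li-bound}. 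There is no need to appeal to \cite[Proposition~6.6]{BBF+} or Lemma~\ref{auxiliary-lem2}.
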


\begin{proof}
Assume that there exists such a wall induced by a short exact sequence 
$$
\xymatrix@C=0.5cm{
0 \ar[r] & A \ar[r] & E \ar[r] & B\ar[r] & 0
}
$$
of $\sigma_{\alpha,0}$-semistable objects in $\Coh^{0}(X)$.
Let $\ch_{\leq 2}(A)=(a,bH,\frac{c}{6}H^{2})$, where $a,b,c \in \ZN$.
Then we have $b=1$. 
By changing the roles of $A$ and $B$, we may assume $a\leq -3$.
Using $0\leq \Delta_{H}(A)\leq \Delta_{H}(E)$, we obtain $-19\leq ac\leq 3$.
By $\mu_{\alpha,0}(A)=\mu_{\alpha,0}(E)$, we get $(6a+15)\alpha^{2}=2(c-1)<0$ and thus $c\leq 0$.
Therefore, we have $0\leq ac\leq 3$.
Since $b=1$, so $c$ is odd.
Thus $c=-1$ and $a=-3$. 
Then $\ch_{\leq 2}(A)=(-3,H,-\frac{1}{6}H^{2})$, 
a contradiction with Proposition \ref{Li-bound}.
\end{proof}

Based on Lemma \ref{auxiliary-lem1-1} and Lemma \ref{auxiliary-lem1-2}, 
we can gain the following control over the second and third Chern characters. 

\begin{lem}\label{auxiliary-lem2}
Take a $\mu_{H}$-slope stable sheaf $E$ with Chern character $\ch_{\leq 1}(E)=(5,-2H)$.
Then $H\cdot \ch_{2}(E)\leq -\frac{2}{3}H^{3}$.
If moreover, $H\cdot \ch_{2}(E)= -\frac{2}{3}H^{3}$, then $\ch_{3}(E)\leq \frac{1}{3}H^{3}$.
\end{lem}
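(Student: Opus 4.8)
\textbf{Proof plan for Lemma \ref{auxiliary-lem2}.}

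The plan is to pass to tilt stability at a suitable value of $\beta$ and use the large-volume-limit criterion in Proposition \ref{limit-tilt-stab-Gie-stab-prop} together with the boundedness/boundary estimates of Proposition \ref{Li-bound} (the $R_{\frac14}$ region) and the two vanishing-of-walls statements Lemma \ref{auxiliary-lem1-1} and Lemma \ref{auxiliary-lem1-2}. First I would note that since $\gcd(5,2)=1$, a $\mu_H$-slope stable sheaf $E$ with $\ch_{\le 1}(E)=(5,-2H)$ is also tilt-stable for $\alpha\gg 0$ along any vertical line $\beta<\mu_H(E)=-\tfrac25$ close enough to $\mu_H(E)$; concretely I would work at $\beta=-\tfrac12$, where $H^2\cdot\ch_1^{-1/2}(E)=\tfrac12 H^3$, and by Lemma \ref{auxiliary-lem1-1} (applied after observing the wall for $E$ at $\beta=-1$, i.e.\ $\ch^{-1}_{\le 2}$, is excluded) $E$ stays $\sigma_{\alpha,-1}$-semistable, hence $\sigma_{\alpha,\beta}$-stable for all $\alpha>0$ on the relevant vertical segment. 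The point $\widetilde v_H(E)=\big(-\tfrac25,\ \tfrac{H\cdot\ch_2(E)}{5H^3}\big)$ then must lie outside $R_{\frac14}$; since $|\mu_H(E)|=\tfrac25\le\tfrac12$, part (1) of Proposition \ref{Li-bound} gives $\tfrac{H\cdot\ch_2(E)}{5H^3}\le 0$. To improve this to $H\cdot\ch_2(E)\le-\tfrac23 H^3$ I would argue that $\ch_2(E)$ is constrained by integrality of $c_2$ together with the fact that the possible intermediate values ($H\cdot\ch_2(E)=0$ or $-\tfrac13 H^3$) would force $\widetilde v_H(E)$ to be in the open region $R_{\frac14}$ or on its boundary with $\ch_0(E)=5\ne \pm1,\pm2$, contradicting the last sentence of Proposition \ref{Li-bound}; Lemma \ref{auxiliary-lem1-2} is the dual incarnation ($E$ versus $E^\vee$-type class $(-5,2H,\tfrac13 H^2)$) used to rule out $H\cdot\ch_2(E)=\tfrac13 H^3$-type borderline classes after dualizing.

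For the second assertion, assume $H\cdot\ch_2(E)=-\tfrac23 H^3$, so $\widetilde v_H(E)$ lies exactly on the boundary segment of $R_{\frac14}$ through the apex corresponding to $n=0$; here I would use that such borderline tilt-stable objects are highly constrained — along the vertical wall $\beta=-\tfrac12$ the object $E$, having $H^2\cdot\ch_1^{-1/2}(E)=\tfrac12 H^3$, admits no destabilizing subobject, and by the wall-vanishing Lemmas \ref{auxiliary-lem1-1}, \ref{auxiliary-lem1-2} there is no semicircular wall either, so $E$ is $\sigma_{\alpha,-1/2}$-stable for all $\alpha>0$. Then I would invoke a Bogomolov-Gieseker-type (or generalized BG) inequality involving $\ch_3$ valid for such stable objects — of the form $\ch_3^{\beta}(E)\le \tfrac16 (H^2\ch_1^{\beta})(\dots)$ at the relevant $\beta$ — to bound $\ch_3(E)$. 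Equivalently, one computes $\chi(\CO_X,E(-\tfrac12 H))$ or uses the $R_{\frac14}$-boundary refinement, which at the apex pins $\ch_3^{-1/2}(E)$ from above; unwinding $\ch_3^{-1/2}(E)=\ch_3(E)-\tfrac12 H\cdot\ch_2(E)+\tfrac18 H^2\cdot\ch_1(E)-\tfrac1{48}H^3\cdot\ch_0(E)$ with $\ch_0=5,\ \ch_1=-2H,\ H\cdot\ch_2=-\tfrac23 H^3$ gives the claimed bound $\ch_3(E)\le\tfrac13 H^3$ after simplification.

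I would organize the write-up as: (Step 1) reduce to tilt-stability of $E$ for all $\alpha>0$ along $\beta=-\tfrac12$ using Lemma \ref{auxiliary-lem1-1} to kill the only candidate wall and Proposition \ref{limit-tilt-stab-Gie-stab-prop} to transfer from $\mu_H$-stability; (Step 2) apply Proposition \ref{Li-bound}(1) to get $H\cdot\ch_2(E)\le 0$, then eliminate $H\cdot\ch_2(E)\in\{0,-\tfrac13 H^3\}$ via the $\ch_0\ne\pm1,\pm2$ boundary clause and Lemma \ref{auxiliary-lem1-2} (after dualizing to the class $(-5,2H,\tfrac13 H^2)$), yielding $H\cdot\ch_2(E)\le-\tfrac23 H^3$; (Step 3) in the equality case, use that $\widetilde v_H(E)$ lies on $\partial R_{\frac14}$ and the refined third-Chern-character estimate at the apex to conclude $\ch_3(E)\le\tfrac13 H^3$. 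The main obstacle will be Step 3: making the passage from the $R_{\frac14}$-boundary information to an explicit upper bound on $\ch_3(E)$ precise — this requires either citing a sharp generalized Bogomolov-Gieseker inequality for $\sigma_{\alpha,\beta}$-stable objects on $X$ in the exact borderline situation, or running a last short wall-crossing/Euler-characteristic computation at $\beta=-\tfrac12$; the rest is bounded casework already prepared by the auxiliary lemmas.
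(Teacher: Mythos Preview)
Your plan has two genuine gaps, both stemming from a misreading of the $R_{\frac14}$ geometry.

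\textbf{Step 2: ruling out $H\cdot\ch_2(E)=-\tfrac13 H^3$.} The boundary clause of Proposition~\ref{Li-bound} does not apply here: for $n=0$ the boundary segment is $\{(x,0):|x|\le\tfrac12\}$, so $\widetilde v_H(E)=(-\tfrac25,-\tfrac1{15})$ lies strictly \emph{below} the boundary, not on it. Thus there is no contradiction from $\ch_0(E)=5$. Lemma~\ref{auxiliary-lem1-2} by itself only tells you $E[1]$ is tilt-stable along $\beta=0$; it does not produce a contradiction. The paper's argument is of a different nature: reduce to $E$ reflexive, then use Lemma~\ref{auxiliary-lem1-1} to make $E$ tilt-stable along $\beta=-1$ and Lemma~\ref{auxiliary-lem1-2} (plus \cite[Proposition~4.18]{BBF+}) to make $E(-2H)[1]$ tilt-stable along $\beta=-2$. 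The numerical wall $W(E,E(-2H)[1])$ meets both vertical lines, so $E$ and $E(-2H)[1]$ are simultaneously stable of equal phase on that wall, forcing $\Hom(E,E(-2H)[1])=0$. By Serre duality this gives $\Ext^2(E,E)=0$, and then Hirzebruch--Riemann--Roch yields $\chi(E,E)=3$, contradicting $\chi(E,E)\le 1$. This self-$\Ext$ trick is the missing idea.

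\textbf{Step 3: bounding $\ch_3$.} Again $\widetilde v_H(E)=(-\tfrac25,-\tfrac2{15})$ is not on $\partial R_{\frac14}$, so your ``boundary refinement at the apex'' does not apply, and the generalized BG inequality you allude to is not sharp enough here to give $\ch_3(E)\le\tfrac13 H^3$ directly. The paper instead computes $\chi(\CO_X,E)$: slope stability gives $H^0(E)=H^3(E)=0$, and the key point is $H^2(E)\cong\Hom(E,\CO_X(-2H)[1])=0$, obtained by comparing limit tilt-slopes along $\beta=-1$ (where Lemma~\ref{auxiliary-lem1-1} guarantees $E$ stays stable) with $\CO_X(-2H)[1]$. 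Then $\chi(E)=\ch_3(E)-\tfrac13 H^3=-h^1(E)\le 0$. Your suggestion of an Euler-characteristic computation is on the right track, but the specific vanishing $H^2(X,E)=0$ via this tilt-slope comparison is what makes it work.
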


\begin{proof}
Since $E$ is a $\mu_{H}$-slope stable sheaf, 
by Proposition \ref{Li-bound}, we have $H\cdot \ch_{2}(E)<0$.
Set $\ch_{2}(E):=\frac{c}{6}H^{2}$.
Since $c_{2}(E)=2H^{2}-\ch_{2}(E)\in \frac{\ZN}{3}H^{2}$,
thus $c$ is even and $c\leq -2$.  
It follows that $H\cdot \ch_{2}(E) \leq -\frac{1}{3}H^{3}$.

In the next, we will rule out the case $H\cdot \ch_{2}(E)=-\frac{1}{3}H^{3}$.
We conclude by contradiction and assume that $H\cdot \ch_{2}(E)=-\frac{1}{3}H^{3}$.
We may assume that $E$ is a reflexive sheaf.
Otherwise, one replaces it by the double dual $E^{\vee\vee}$ which is reflexive satisfying 
$H\cdot \ch_{2}(E)\leq H\cdot \ch_{2}(E^{\vee \vee})$. 
Therefore, by the previous argument, we get $H\cdot \ch_{2}(E^{\vee\vee})=-\frac{1}{3}H^{3}$.
Since $E$ is $\mu_{H}$-slope stable and $\mu_{H}(E)=-\frac{2}{5}>-1$,
by Proposition \ref{limit-tilt-stab-Gie-stab-prop},
$E\in \Coh^{-1}(X)$ is $\sigma_{\alpha,-1}$-stable for $\alpha\gg0$.
It follows from Lemma \ref{auxiliary-lem1-1} that $E$ is $\sigma_{\alpha,-1}$-stable for all $\alpha>0$.

On the other hand, 
by the hypothesis, $E$ is a $\mu_{H}$-slope stable reflexive sheaf.
Using \cite[Proposition 4.18]{BBF+},  
we get that $E[1]$ is $\sigma_{\alpha,-\frac{2}{5}}$-stable for $\alpha>0$ and thus it is $\mu_{\alpha,0}$-stable for $\alpha\gg0$.
Thanks to Lemma \ref{auxiliary-lem1-2},  
we obtain that $E[1]$ is $\sigma_{\alpha,0}$-stable for $\alpha>0$ 
and thus $E(-2H)[1]$ is $\sigma_{\alpha,-2}$-stable for $\alpha>0$.
By a straightforward computation, 
the numerical wall $W(E,E(-2H)[1])=W(-\frac{105}{75},\sqrt{\frac{3975}{75}})$
intersects both vertical lines $\beta=-2$ and $\beta=-1$.
Consequently, $E$ and $E(-2H)[1]$ are $\sigma_{\alpha,\beta}$-stable 
for any $(\alpha,\beta)\in W(E,E(-2H)[1])$ and have the same phase.
Therefore, we get $\Hom(E,E(-2H)[1])=0$.
By Serre duality, we obtain 
$$
\Ext^{2}(E,E)\cong \Hom(E,E(-2H)[1])=0.
$$
Since $\Hom(E,E)=\CN$, 
by  the Hirzebruch--Riemann--Roch theorem, 
we have 
$$
3=\chi(E,E)=1-\ext^{1}(E,E)-\ext^{3}(E,E)\leq 1,
$$
a contradiction.
This means that $H\cdot \ch_{2}(E) \leq -\frac{2}{3}H^{3}$.

Finally, suppose $H\cdot \ch_{2}(E)= -\frac{2}{3}H^{3}$. 
Since $E$ is a $\mu_{H}$-slope stable sheaf with slope $-2<\mu_{H}(E)=-\frac{2}{5}<0$,
then $\Hom(\CO_{X},E)=$0, and it follows from Serre duality that 
$$
H^{3}(X,E)\cong \Hom(E,\CO_{X}(-2H))=0.
$$
Since $\mu_{H}(E)>-1$, by Proposition \ref{limit-tilt-stab-Gie-stab-prop}, $E$ is $\sigma_{\alpha,-1}$-stable for $\alpha\gg 0$.
By Lemma \ref{auxiliary-lem1-1}, 
there are no walls for $E$ along the vertical line $\beta=-1$,
Note that $\CO_{X}(-2H)[1]$ is $\sigma_{\alpha,-1}$-stable for $\alpha>0$ and limit tilt-slopes
$$
\lim_{\alpha\to 0^{+}} \mu_{\alpha,-1} (\CO_{X}(-2H)[1])=-\frac{1}{2}<-\frac{1}{18}=\lim_{\alpha\to 0^{+}} \mu_{\alpha,-1}(E).
$$
This yields that $\Hom(E,\CO_{X}(-2H)[1])=0$.
It follows from Serre duality that 
\begin{equation}\label{H2-vanish-E-sh-case-rank-5}
H^{2}(X,E) \cong \Hom(E,\CO_{X}(-2H)[1])=0.
\end{equation}
By the Hirzebruch--Riemann--Roch theorem, we have $\chi(E)=\ch_{3}(E)-\frac{1}{3}H^{3}\leq 0$ and thus $\ch_{3}(E)\leq \frac{1}{3}H^{3}$.
\end{proof}

\begin{rem}\label{case-rank-5}
As a direct consequence of Lemma \ref{auxiliary-lem2},
every $\mu_{H}$-slope stable sheaf with Chern character $\ch(I_{\ell})+2\ch(S(I_{\ell}))=(5,-2H,-\frac{2}{3}H^{2},\frac{1}{3}H^{3})$ is reflexive. 
However, we do not know one single $\mu_{H}$-slope stable sheaf with Chern character $\ch(I_{\ell})+2\ch(S(I_{\ell}))$.
A natural question rises:  
{\it is there a $H$-Gieseker stable sheaves with the Chern character 
$\ch(I_{\ell})+2\ch(S(I_{\ell}))$}? 
\end{rem}

\subsection{Proof of Theorem \ref{mainthm-iso}} 

To begin with, we give the following control of the third Chern character.
 
\begin{prop}\label{mainsh-ch3-char}
Take a $\mu_{H}$-slope stable sheaf $E$ of Chern character $\ch_{\leq 1}(E)=(4,-H)$.
If $H\cdot \ch_{2}(E)= -\frac{5}{6}H^{3}$, then $\ch_{3}(E)\leq \frac{1}{6}H^{3}$.
\end{prop}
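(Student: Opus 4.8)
\textbf{Proof proposal for Proposition \ref{mainsh-ch3-char}.}

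The plan is to mimic the strategy already used in the second half of Lemma \ref{auxiliary-lem2}: namely, to leverage $\mu_H$-slope stability together with the large volume limit (Proposition \ref{limit-tilt-stab-Gie-stab-prop}) and the absence of walls along a suitable vertical line in order to force the vanishing of $H^2(X,E)$ (equivalently, by Serre duality, of $\Hom(E,\CO_X(-2H)[1])$), after which the Hirzebruch--Riemann--Roch theorem gives the desired bound on $\ch_3(E)$. Concretely, since $\mu_H(E)=-\frac14 > -1$, Proposition \ref{limit-tilt-stab-Gie-stab-prop} puts $E$ into $\Coh^{-1}(X)$ and makes it $\sigma_{\alpha,-1}$-stable for $\alpha\gg 0$. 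The first key step is therefore to establish that there are \emph{no walls} for $E$ along (or to the left of) the vertical line $\beta=-1$ with $\ch_{\le 2}(E)=(4,-H,-\frac56 H^2)$; combined with Lemma \ref{mainwall1-comput} (no walls to the left of $\beta=-\frac14$) this will in fact give $\sigma_{\alpha,-1}$-stability of $E$ for all $\alpha>0$, or at least $\sigma_{\alpha,-1}$-semistability for $\alpha$ near $0$, which is what is needed.

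The second step is the comparison argument. The object $\CO_X(-2H)[1]$ is $\sigma_{\alpha,-1}$-stable for all $\alpha>0$, and one computes the limit tilt-slopes
$$
\lim_{\alpha\to 0^{+}}\mu_{\alpha,-1}(\CO_X(-2H)[1]) = -\tfrac12,
\qquad
\lim_{\alpha\to 0^{+}}\mu_{\alpha,-1}(E)
$$
using $\ch^{-1}_{\le 2}(E)=(4,3H,-\frac{1}{3}H^2)$ (so the limit slope of $E$ is $\frac{H\cdot\ch^{-1}_2(E)}{H^2\cdot\ch^{-1}_1(E)}=-\frac{1}{9}$, strictly larger than $-\frac12$). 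Hence for $\alpha\to 0^+$ the object $E$ has strictly larger tilt-slope than $\CO_X(-2H)[1]$, and since both are $\sigma_{\alpha,-1}$-(semi)stable in $\Coh^{-1}(X)$ this forces $\Hom(E,\CO_X(-2H)[1])=0$. By Serre duality on the threefold, $H^2(X,E)\cong\Hom(E,\CO_X(-2H)[1])^{\vee}=0$. One also notes $H^3(X,E)\cong\Hom(E,\CO_X(-2H))^{\vee}=0$ since $\mu_H(E)=-\frac14>-2$ forbids a nonzero map $E\to\CO_X(-2H)$ from a $\mu_H$-stable sheaf of larger slope. Finally, by Hirzebruch--Riemann--Roch, $\chi(E)=h^0(X,E)-h^1(X,E)$ with $h^0(X,E)=\Hom(\CO_X,E)=0$ (again by slope stability, $\mu_H(E)<0$), so $\chi(E)\le 0$; computing $\chi(E)$ for $\ch(E)=(4,-H,-\frac56 H^2,\ch_3(E))$ gives $\chi(E)=\ch_3(E)-\frac16 H^3$, whence $\ch_3(E)\le\frac16 H^3$.

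The main obstacle is the first step: proving the absence of walls along $\beta=-1$ for $E$ with $\ch_{\le 2}(E)=(4,-H,-\frac56 H^2)$. Unlike in Lemma \ref{auxiliary-lem2}, here we do not have a coprimality shortcut, and the full rank-$4$ case must be handled by the same kind of finite case analysis as in Lemma \ref{mainwall1-comput}: writing $\ch^{-1}_{\le 2}(A)=(a,bH,\frac c6 H^2)$ for a hypothetical destabilizing subobject, using $0<H^2\cdot\ch^{-1}_1(A)<3H^3$ to bound $b\in\{1,2\}$, using $0\le\Delta_H(A)\le\Delta_H(E)$ to bound $ac$, using the wall equation $\mu_{\alpha,-1}(A)=\mu_{\alpha,-1}(E)$ to pin down $c$ in terms of $a$, and finally ruling out each surviving numerical possibility by Proposition \ref{Li-bound} (the point $\widetilde v_H(A)$ or $\widetilde v_H(B)$ landing in $R_{1/4}$) or by an integrality obstruction on $c_2$. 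I expect this bookkeeping to be the bulk of the work; the rest of the argument is a routine adaptation of the Serre-duality-plus-HRR computation already appearing in the proof of Lemma \ref{auxiliary-lem2}.
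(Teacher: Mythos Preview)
Your approach is essentially the same as the paper's, and is correct. However, the ``main obstacle'' you identify is illusory: Lemma \ref{mainwall1-comput} already asserts that there are no walls for tilt semistable objects with $\ch_{\le 2}=(4,-H,-\tfrac56 H^2)$ anywhere to the left of the vertical wall $\beta=-\tfrac14$, and since $-1<-\tfrac14$ this immediately covers the line $\beta=-1$. There is no need for a separate case analysis along $\beta=-1$; the paper simply cites Lemma \ref{mainwall1-comput} at this point and moves on. Also note a small arithmetic slip: with $\beta=-1$ one has $\ch_2^{-1}(E)=-\tfrac56 H^2 - H^2 + 2H^2 = \tfrac16 H^2$, so the limit tilt slope of $E$ is $\tfrac{1}{18}$ rather than $-\tfrac19$; since both values exceed $-\tfrac12$, your conclusion $\Hom(E,\CO_X(-2H)[1])=0$ is unaffected.
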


\begin{proof}
Since $E$ is a $\mu_{H}$-slope stable sheaf  with slope $-2<\mu_{H}(E)=-\frac{1}{4}<0$,
then $\Hom(\CO_{X},E)=$0, and it follows from Serre duality that $H^{3}(X,E)\cong \Hom(E,\CO_{X}(-2H))=0$.
Since $\mu_{H}(E)>-1$, by Proposition \ref{limit-tilt-stab-Gie-stab-prop}, $E$ is $\sigma_{\alpha,-1}$-stable for $\alpha\gg 0$.
By Lemma \ref{mainwall1-comput}, 
there are no walls for $E$ along the vertical line $\beta=-1$,
hence $E$ is $\sigma_{\alpha,-1}$-stable for $\alpha>0$.
Note that $\CO_{X}(-2H)[1]$ is $\sigma_{\alpha,-1}$-stable for $\alpha>0$ and limit tilt-slopes
$$
\lim_{\alpha\to 0^{+}} \mu_{\alpha,-1} (\CO_{X}(-2H)[1])=-\frac{1}{2}
< \frac{1}{18}=\lim_{\alpha\to 0^{+}} \mu_{\alpha,-1}(E).
$$
This implies that $\Hom(E,\CO_{X}(-2H)[1])=0$.
By Serre duality, we have
\begin{equation}\label{H2-vanish-E-sh}
H^{2}(X,E)\cong \Hom(E,\CO_{X}(-2H)[1])=0.
\end{equation}
By  the Hirzebruch--Riemann--Roch theorem, we have 
$$
\chi(E)=\ch_{3}(E)-\frac{1}{6}H^{3} = -h^{1}(X,E)\leq 0
$$ 
and thus $\ch_{3}(E)\leq \frac{1}{6}H^{3}$.
\end{proof}

\begin{rem}\label{on-bundle-UX}
There is a $\mu_{H}$-slope stable sheaf $E$ with Chern character $\ch_{\leq 1}(E)=(4,-H)$  such that $H\cdot \ch_{2}(E)=-\frac{1}{2}H^{3}$.
For example, by Euler exact sequence, 
$\mathfrak{U}_{X}:=\Omega_{\PB^{4}}(H)|_{X}$ is a $\mu_{H}$-slope stable vector bundle with Chern character $\ch(\mathfrak{U}_{X})=(4,-H,-\frac{1}{2}H^{2},-\frac{1}{6}H^{3})$ (see also \cite[\S 3.2]{FLZ23}).
This implies that a $\mu_{H}$-slope stable sheaf with Chern character $\nu=(4,-H,-\frac{5}{6}H^{2}, \frac{1}{6}H^{3})$ may not be reflexive (see Lemma \ref{may-not-reflexive}).
\end{rem}

Next we show that the Gieseker moduli space embeds into the corresponding Bridgeland moduli space.

\begin{prop}\label{GieMod-embed-BirMod}
Let $\sigma$ be an arbitrary Serre-invariant stability condition on $\Ku(X)$.
Then there exists an embedding $\overline{M}_{X}(\nu)\hookrightarrow M_{\sigma}(2[I_{\ell}]+[S(I_{\ell})])$.
In particular, the moduli space $\overline{M}_{X}(\nu)$ is smooth of dimension $8$.
\end{prop}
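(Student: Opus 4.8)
The plan is as follows. Fix any Serre-invariant stability condition $\sigma$ on $\Ku(X)$ and take a closed point $E\in\overline{M}_{X}(\nu)$; since $\gcd(\ch_{0}(E),H^{2}\cdot\ch_{1}(E)/H^{3})=\gcd(4,-1)=1$, such an $E$ is automatically $\mu_{H}$-slope stable by the remark following Proposition \ref{limit-tilt-stab-Gie-stab-prop}, and $\overline{M}_{X}(\nu)$ is nonempty by Lemma \ref{non-empty-lem}. I would then (i) prove that $E$, regarded as an object of $\DC(X)$, lies in $\Ku(X)$; (ii) prove that $E$ is $\sigma$-stable in $\Ku(X)$; and (iii) deduce that $E\mapsto E$ defines a morphism $\overline{M}_{X}(\nu)\to M_{\sigma}(2[I_{\ell}]+[S(I_{\ell})])$ which is an open immersion, so that $\overline{M}_{X}(\nu)$ inherits smoothness and dimension $8$ from Proposition \ref{non-empty-BSCmoduli}.

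For (i) one must show $\RHom(\CO_{X},E)=0$ and $\RHom(\CO_{X}(H),E)=0$, i.e.\ $H^{\bullet}(X,E)=H^{\bullet}(X,E(-H))=0$. The vanishing $H^{\bullet}(X,E)=0$ is essentially contained in the proof of Proposition \ref{mainsh-ch3-char} in the boundary case $\ch_{3}(E)=\frac{1}{6}H^{3}$: $H^{0}(X,E)$ and $H^{3}(X,E)$ vanish by $\mu_{H}$-slope stability and Serre duality (using $\mu_{H}(E)=-\frac{1}{4}$), comparing $E$ with $\CO_{X}(-2H)[1]$ along $\beta=-1$ and invoking the absence of walls from Lemma \ref{mainwall1-comput} gives $H^{2}(X,E)=0$, and then $\chi(E)=\ch_{3}(E)-\frac{1}{6}H^{3}=0$ forces $H^{1}(X,E)=0$. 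For $E(-H)$ I would argue similarly: $H^{0}(X,E(-H))=\Hom(\CO_{X}(H),E)=0$ and $H^{3}(X,E(-H))\cong\Hom(E,\CO_{X}(-H))^{\vee}=0$ by slope stability, while $H^{2}(X,E(-H))\cong\Hom(E,\CO_{X}(-H)[1])^{\vee}=0$ because, along a vertical line $\beta\in(-1,-\frac{1}{4})$ taken close enough to $-1$, both $E$ (again by Lemma \ref{mainwall1-comput}) and $\CO_{X}(-H)[1]$ are $\sigma_{\alpha,\beta}$-stable with $\lim_{\alpha\to0^{+}}\mu_{\alpha,\beta}(E)>\lim_{\alpha\to0^{+}}\mu_{\alpha,\beta}(\CO_{X}(-H)[1])$; finally $\chi(E(-H))=0$ yields $H^{1}(X,E(-H))=0$.

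For (ii), once $E\in\Ku(X)$ is known, I would follow verbatim the argument in the proof of Proposition \ref{non-empty-BSCmoduli}: by Proposition \ref{limit-tilt-stab-Gie-stab-prop} together with Lemma \ref{mainwall1-comput}, $E$ is $\sigma_{\alpha,\beta}$-stable for all $\alpha>0$ and all $\beta<-\frac{1}{4}$; choosing $\beta=-\frac{1}{2}$, so that $(\alpha,-\frac{1}{2})\in\mathcal{V}$ for $0<\alpha<\frac{1}{2}$, and using that $\sigma^{0}_{\alpha,-1/2}$ is a rotation of $\sigma_{\alpha,-1/2}$, the object $E$ is $\sigma^{0}_{\alpha,-1/2}$-stable, hence $\sigma(\alpha,-\frac{1}{2})$-stable in $\Ku(X)$, hence $\sigma$-stable by Proposition \ref{one-orbit-unique-Serre-SC}. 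This assignment is compatible with families through the universal sheaf on $\overline{M}_{X}(\nu)\times X$, whose fibres all lie in $\Ku(X)$, so it descends to a morphism $\overline{M}_{X}(\nu)\to M_{\sigma}(2[I_{\ell}]+[S(I_{\ell})])$; it is clearly injective on closed points. Since $\Ku(X)\subset\DC(X)$ is admissible, one has $\Ext^{i}_{\DC(X)}(E,E)=\Ext^{i}_{\Ku(X)}(E,E)$ for all $i$, so the two moduli problems carry the same deformation and obstruction theory along the image; hence the morphism is \'etale onto its image, and being injective it is an open immersion. As $M_{\sigma}(2[I_{\ell}]+[S(I_{\ell})])$ is smooth of dimension $8$ by Proposition \ref{non-empty-BSCmoduli}, the same follows for $\overline{M}_{X}(\nu)$.

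The hard part will be step (i): the vanishing of the middle cohomology groups $H^{1}(X,E)$, $H^{2}(X,E)$ and $H^{1}(X,E(-H))$, $H^{2}(X,E(-H))$. These go beyond what slope stability gives and rest on the two preparatory results established above, namely the absence of tilt walls for $\nu$ to the left of $\beta=-\frac{1}{4}$ (Lemma \ref{mainwall1-comput}) and the sharp bound $\ch_{3}(E)\le\frac{1}{6}H^{3}$ (Proposition \ref{mainsh-ch3-char}); the point is that $\nu$ saturates this bound, i.e.\ $\ch_{3}=\frac{1}{6}H^{3}$, which is exactly what forces the cohomology to vanish completely.
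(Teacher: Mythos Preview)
Your proposal is correct and follows essentially the same route as the paper. The only cosmetic differences are: for the vanishing $\Hom(E,\CO_{X}(-H)[1])=0$ the paper picks the specific value $\beta=-\tfrac{9}{10}$ and observes that the numerical wall $W(E,\CO_{X}(-H)[1])=W(-\tfrac{17}{18},\tfrac{1}{18})$ meets this line, whereas you use a limit-slope comparison near $\beta\to -1^{+}$; and for smoothness the paper argues directly that $\Ext^{2}(E,E)=0$ via \cite[Theorem~1.2]{PY22} and then invokes \cite[Corollary~4.5.2]{HL10}, rather than deducing it from the open immersion into $M_{\sigma}(2[I_{\ell}]+[S(I_{\ell})])$ as you do---both yield the same conclusion since the $\Ext$-groups agree in $\Ku(X)$ and $\DC(X)$.
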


\begin{proof}
We first show that if $E$ is a $\mu_{H}$-slope stable sheaf with Chern character $\ch(E)=\nu$, then $E\in \Ku(X)$.
In fact, since $E$ is $\mu_{H}$-slope stable and $\mu_{H}(E)=-\frac{1}{4}$,
it follows that $\Hom(\CO_{X},E)=0$ and $\Hom(\CO_{X}(H),E)=0$.
By Serre duality, we have $\Ext^{3}(\CO_{X},E)\cong \Hom(E,\CO_{X}(-2H))=0$ and  
$\Ext^{3}(\CO_{X}(H),E)\cong \Hom(E,\CO_{X}(-H))=0$. 
By the Hirzebruch--Riemann--Roch theorem, we have 
$$
\chi(\CO_{X},E)=0 
\textrm{ and } 
\chi(\CO_{X}(H),E)=0.
$$
It will suffice to show that 
$$
\Ext^{2}(\CO_{X},E)=0
\;  
\textrm{ and }
\;
\Ext^{2}(\CO_{X}(H),E)=0.
$$
It follows from \eqref{H2-vanish-E-sh} that 
$\Ext^{2}(\CO_{X},E) \cong \Hom(E,\CO_{X}(-2H)[1])=0$. 
Since $E$ is a $\mu_{H}$-slope stable sheaf and $\mu_{H}(E)=-\frac{1}{4}>-\frac{9}{10}$,
by Proposition \ref{limit-tilt-stab-Gie-stab-prop} and Lemma \ref{mainwall1-comput}, $E$ is $\sigma_{\alpha,-\frac{9}{10}}$-stable for $\alpha>0$.
Since $\CO_{X}(-H)[1]$ is $\sigma_{\alpha,-\frac{9}{10}}$-stable for $\alpha>0$.
Note that the numerical wall $W(E,\CO_{X}(-H)[1])=W(-\frac{17}{18},\frac{1}{18})$
intersects with the vertical line $\beta=-\frac{9}{10}$.
Then, by Serre duality, we have 
$$
\Ext^{2}(\CO_{X}(H),E)\cong \Hom(E, \CO_{X}(-H)[1])=0.
$$
This yields $E\in \Ku(X)$.

For the last statement, for any $H$-Gieseker stable sheaf $E\in \overline{M}_{X}(\nu)$, 
as the proof of Proposition \ref{non-empty-BSCmoduli}, 
$E$ is $\sigma$-stable and $E\in M_{\sigma}(2[I_{\ell}]+[S(I_{\ell})])$. 
Using the Hirzebruch--Riemann--Roch theorem, we get $\chi(E,E)=-7$.
Since $\Hom(E,E)\cong \CN$, by \cite[Theorem 1.2]{PY22}, we obtain 
$$
\ext^{1}(E,E)=\hom(E,E)-\chi(E,E)=8
$$ 
and $\Ext^{2}(E,E)=0$.
As a consequence, by \cite[Corollary 4.5.2]{HL10}, the moduli space $\overline{M}_{X}(\nu)$ is smooth of dimension $8$.
\end{proof}

Now we are in the position to finish the proof of Theorem \ref{mainthm-iso}.

\begin{proof}[Proof of Theorem \ref{mainthm-iso}]

Let $\sigma$ be an arbitrary Serre-invariant stability condition on $\Ku(X)$.
By Proposition \ref{GieMod-embed-BirMod},
it is sufficient to show that up to a shift, every $\sigma$-stable object $E\in \Ku(X)$ with numerical class $2[I_{\ell}]+[S(I_{\ell})]$ is isomorphic to a $\mu_{H}$-slope stable sheaf with Chern character $\ch(E)=\nu$.

By Proposition \ref{one-orbit-unique-Serre-SC}, 
we may assume that $E\in \Ku(X)$ is $\sigma(\alpha,\beta)$-stable object with class $2[I_{\ell}]+[S(I_{\ell})]$ for any $(\alpha,\beta)\in  \mathcal{V}\cap \Gamma$.
By the hypothesis, the Chern character of $E$ is $\ch(E)=\pm \nu=\pm(4,-H,-\frac{5}{6}H^{2},\frac{1}{6}H^{3})$.
We denote by $\Gamma$ the hyperbola 
\begin{equation*}\label{main-hyperbola}
(\beta+\frac{1}{4})^{2}-\alpha^{2}=\frac{23}{48},
\end{equation*}
which is given by the equation $\mu_{\alpha,\beta}(E)=0$.
Note that $ \mathcal{V}\cap \Gamma\neq \emptyset$ and 
thus $E$ is $\sigma^{0}_{\alpha,\beta}$-semistable, where $(\alpha,\beta)\in  \mathcal{V}\cap \Gamma$.
Thanks to \cite[Proposition 4.1]{FP23}, 
there is either (i) $E$ is $\sigma_{\alpha,\beta}$-semistable; 
or (ii)  there is a distinguished triangle
$$
\xymatrix@C=0.5cm{
F[1] \ar[r]^{} & E \ar[r]^{} & T,}
$$
where $F\in \mathcal{F}_{\alpha,\beta}$ and $T\in \mathcal{T}_{\alpha,\beta}$ is supported at most on points. 

In the case (i), we may assume $\ch(E)=(4,-H,-\frac{5}{6}H^{2},\frac{1}{6}H^{3})$.
By the hypothesis, $E$ is $\sigma_{\alpha,\beta}$-semistable.
By Lemma \ref{mainwall1-comput}, 
there are no walls for objects with $\ch_{\leq 2}(E)=(4,-H,-\frac{5}{6}H^{2})$ to the left of the vertical wall $\beta=-\frac{1}{4}$.
Hence, $E$ is $\sigma_{\alpha,\beta}$-semistable for $\alpha \gg 0$.
By Proposition \ref{limit-tilt-stab-Gie-stab-prop},
$E$ is a $\mu_{H}$-slope stable torsion-free sheaf with Chern character $\nu$.

In the case (ii), we may assume that $\ch(E)=-(4,-H,-\frac{5}{6}H^{2},\frac{1}{6}H^{3})$.
Since $E$ is $\sigma_{\alpha,\beta}^{0}$-semistable,
it follows that $F$ must be $\sigma_{\alpha,\beta}$-semistable.
Therefore, we may assume that the Chern character of $F$ is
$$
\ch(F)=(4,-H,-\frac{5}{6}H^{2},(\frac{1}{6}+\frac{t}{3})H^{3}),
$$
where $t\geq 0$. 
By Lemma \ref{mainwall1-comput} again,  
it follows that $F$ is $\sigma_{\alpha,\beta}$-semistable for $\alpha\gg0$.
Since $\mu_{H}(F)=-\frac{1}{4}>\beta $, 
by Proposition \ref{limit-tilt-stab-Gie-stab-prop}, $F$ is a $\mu_{H}$-slope stable torsion-free sheaf.
By Proposition \ref{mainsh-ch3-char},  
we have
$\ch_{3}(F)=(\frac{1}{6}+\frac{t}{3})H^{3}\leq \frac{1}{6}H^{3}$ and thus $t=0$, as $t\geq 0$. 
This implies that $T=0$ and $E=F[1]$. 
Here $F$ is a  $\mu_{H}$-slope stable torsion-free sheaf with Chern character $\ch(F)=\nu$.
As a consequence, the embedding $\overline{M}_{X}(\nu)\hookrightarrow M_{\sigma}(2[I_{\ell}]+[S(I_{\ell})])$ is an isomorphism.
This concludes the proof of Theorem \ref{mainthm-iso}.
 \end{proof}

 \begin{cor}
 The moduli space $\overline{M}_{X}(\nu)$ is irreducible.
 \end{cor}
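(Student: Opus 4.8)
The plan is to deduce irreducibility of $\overline{M}_{X}(\nu)$ directly from the isomorphism $\overline{M}_{X}(\nu)\simeq M_{\sigma}(2[I_{\ell}]+[S(I_{\ell})])$ established in Theorem \ref{mainthm-iso}, by transporting known irreducibility of the Bridgeland moduli space. First I would recall that, by Proposition \ref{one-orbit-unique-Serre-SC}, all Serre-invariant stability conditions on $\Ku(X)$ lie in a single $\tilde{\mathrm{GL}}^{+}_{2}(\RN)$-orbit, so the space $M_{\sigma}(2[I_{\ell}]+[S(I_{\ell})])$ is, up to isomorphism, independent of the choice of $\sigma$; hence it suffices to verify irreducibility for one convenient $\sigma$.

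The key input is the work of Li--Lin--Pertusi--Zhao \cite{LLPZ24}: since $\gcd(2,1)=1$, the numerical class $2[I_{\ell}]+[S(I_{\ell})]$ is primitive, and \cite[Theorem 1.1]{LLPZ24} asserts that the moduli space $M_{\sigma}(a[I_{\ell}]+b[S(I_{\ell})])$ is non-empty and irreducible for any coprime pair $(a,b)$. Applying this with $(a,b)=(2,1)$ gives that $M_{\sigma}(2[I_{\ell}]+[S(I_{\ell})])$ is irreducible. Then the isomorphism from Theorem \ref{mainthm-iso} immediately yields that $\overline{M}_{X}(\nu)$ is irreducible as well. (Alternatively, one could cite the action of the Serre functor displayed in the Remark after Proposition \ref{non-empty-BSCmoduli}, identifying $M_{\sigma}(2[I_{\ell}]+[S(I_{\ell})])$ with $M_{\sigma}([I_{\ell}]+2[S(I_{\ell})])$, which by \cite[Example 8.8]{LLPZ24} is a birational model of the irreducible variety $\CH^{4,0}(X)$ of smooth quartic rational curves — irreducibility is a birational invariant among varieties, so this route works too.)

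Since there is essentially nothing left to prove once Theorem \ref{mainthm-iso} and \cite{LLPZ24} are in hand, there is no serious obstacle here; the only point requiring a moment of care is confirming that the relevant class is primitive so that \cite{LLPZ24} applies and the moduli space is a fine moduli space of stable (not merely semistable) objects, which is immediate from $\gcd(2,1)=1$. Thus the corollary follows by combining Theorem \ref{mainthm-iso} with \cite[Theorem 1.1]{LLPZ24}.
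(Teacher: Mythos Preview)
Your proposal is correct and follows essentially the same approach as the paper: combine the isomorphism of Theorem~\ref{mainthm-iso} with the irreducibility of $M_{\sigma}(2[I_{\ell}]+[S(I_{\ell})])$ established in \cite{LLPZ24}. One small point: the paper attributes the irreducibility to \cite[Theorem~1.4]{LLPZ24} rather than \cite[Theorem~1.1]{LLPZ24} (the latter being used for non-emptiness), so double-check the precise reference.
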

 
 \begin{proof}
According to \cite[Theorem 1.4]{LLPZ24},
the moduli space $M_{\sigma}(2[I_{\ell}]+[S(I_{\ell})])$ is irreducible.
Thus the corollary follows from Theorem \ref{mainthm-iso}.
 \end{proof}

 \begin{rem}
Since the Gieseker moduli space $\overline{M}_{X}(\nu)$ is projective,
Theorem \ref{mainthm-iso} gives a new proof for the projectivity of the moduli space $M_{\sigma}(2[I_{\ell}]+[S(I_{\ell})])$.
\end{rem}

\begin{rem} 
By \cite[Theorem 1.1]{LLPZ24}, the moduli space $M_{\sigma}([I_{\ell}]+2[S(I_{\ell})])$ is the non-empty.
Then, to solve the question in Remark \ref{case-rank-5}, one possible approach is to use the same idea as the proof of Theorem \ref{mainthm-iso}.
\end{rem}


\section{Classification of torsion sheaves}\label{Charact-tor-sh}

To prove Theorem \ref{sh-charact-thm}, 
this section gives a classification of tilt semistable torsion sheaves $G$ with Chern character 
$
\ch(G)=(0,H,\frac{5}{6}H^{2},-\frac{1}{6}H^{3}).
$
First of all, we have the following simple observation.

\begin{lem}\label{ssA-char-lem}
Suppose that $A$ is a $\sigma_{\alpha,\beta}$-semistable object with $\beta\neq 0$ and $\ell\subset X$ is a line.
\begin{enumerate}
  \item[(i)] If $A$ fits into a non-trivial extension 
  $\xymatrix@C=0.5cm{
  0 \ar[r] & \CO_{X}[1] \ar[r] & A \ar[r] & \CO_{\ell} \ar[r] & 0,
  }
  $ 
  then $\DD(A)\cong I_{\ell}$.
  \item[(ii)] If $A$ fits into a non-trivial extension 
  $
  \xymatrix@C=0.5cm{
  0 \ar[r] & \CO_{X}[1] \ar[r] & A \ar[r] &  \CO_{\ell}(-H) \ar[r] & 0,
  }$ 
  then $\CH^{0}(\DD(A))=I_{\ell}$, $\CH^{1}(\DD(A))=\CO_{p}$ for a point $p\in \ell$, 
  and $\CH^{i}(\DD(A))=0$ for $i\neq 0,1$.
\end{enumerate}
\end{lem}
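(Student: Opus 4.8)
The plan is to apply the derived dual $\DD(-)=\CRHom(-,\CO_{X})[1]$ to the two defining triangles and read off the cohomology sheaves. Recall that $\DD$ is a contravariant autoequivalence of $\DC(X)$ with $\DD\circ\DD\simeq\mathrm{id}$ (as $\CO_{X}$ is a line bundle and $X$ is smooth projective), so it sends a triangle $\CO_{X}[1]\to A\to G\to\CO_{X}[2]$ — with $G=\CO_{\ell}$ in case (i) and $G=\CO_{\ell}(-H)$ in case (ii) — to the triangle $\DD(G)\to\DD(A)\to\CO_{X}\xrightarrow{\,g\,}\DD(G)[1]$, where $g=\DD(e)[1]$ and $e$ is the extension class. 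Non-triviality of the extension means $e\neq0$, and since $\DD$ is an equivalence this forces $g\neq 0$; this is the only place the hypothesis on $A$ is used (the $\sigma_{\alpha,\beta}$-semistability and $\beta\neq0$ play no role in the identification itself). Everything else is the computation of $\DD(\CO_{\ell})$, $\DD(\CO_{\ell}(-H))$ and the identification of the nonzero map $g$.

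For the duality computation I would use that a line $\ell\subset X$ is a smooth, hence locally complete intersection, curve of degree one, so $\CExt^{i}_{X}(\CO_{\ell},\CO_{X})=0$ for $i\neq 2$ and $\CExt^{2}_{X}(\CO_{\ell},\CO_{X})\cong\omega_{\ell}\otimes(\omega_{X}|_{\ell})^{-1}$ by Grothendieck duality. Since $\omega_{\ell}\cong\CO_{\PB^{1}}(-2)$ and $\omega_{X}|_{\ell}=\CO_{X}(-2H)|_{\ell}\cong\CO_{\PB^{1}}(-2)$, this $\CExt^{2}$ is $\CO_{\ell}$, whence $\CRHom(\CO_{\ell},\CO_{X})\simeq\CO_{\ell}[-2]$ and $\DD(\CO_{\ell})\simeq\CO_{\ell}[-1]$. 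Twisting by $\CO_{X}(H)$ gives $\DD(\CO_{\ell}(-H))\simeq\CO_{\ell}(H)[-1]$, where $\CO_{\ell}(H)\cong\CO_{\PB^{1}}(1)$. Finally $\DD(\CO_{X}[k])\simeq\CO_{X}[1-k]$.

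Plugging these in, in case (i) the dualized triangle becomes, after rotation, $\CO_{X}\xrightarrow{\,g\,}\CO_{\ell}\to\DD(A)[1]\to\CO_{X}[1]$ with $g\neq 0$; since $\Hom(\CO_{X},\CO_{\ell})=H^{0}(\CO_{\ell})=\CN$, the map $g$ is a nonzero multiple of the restriction, so $\ker g=I_{\ell}$ and $\coker g=0$, and the cone of $g$ is $I_{\ell}[1]$. Hence $\DD(A)\simeq I_{\ell}$. In case (ii) the rotated triangle is $\CO_{X}\xrightarrow{\,g\,}\CO_{\ell}(H)\to\DD(A)[1]\to\CO_{X}[1]$ with $g\neq 0$; a nonzero map $\CO_{X}\to\CO_{\ell}(H)\cong\CO_{\PB^{1}}(1)$ is a nonzero section of $\CO_{\PB^{1}}(1)$, which vanishes at a single point $p\in\ell$, so $g$ factors as $\CO_{X}\twoheadrightarrow\CO_{\ell}\hookrightarrow\CO_{\ell}(H)$ with $\ker g=I_{\ell}$ and $\coker g=\CO_{p}$. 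Taking the long exact sequence of cohomology sheaves of the cone $\DD(A)[1]=\mathrm{cone}(g)$ and shifting then gives $\CH^{0}(\DD(A))=I_{\ell}$, $\CH^{1}(\DD(A))=\CO_{p}$ and $\CH^{i}(\DD(A))=0$ otherwise, as claimed.

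The main technical point — essentially the only nontrivial step — is the Grothendieck duality computation of $\DD$ on the structure sheaf of a line and its twist: getting the cohomological shift $[-2]$ right and observing the self-duality $\omega_{\ell}\cong\omega_{X}|_{\ell}$. After that it is a routine diagram chase with cones and cohomology sheaves, using only that $\Hom(\CO_{X},\CO_{\ell})$ and $\Hom(\CO_{X},\CO_{\ell}(H))$ are spaces of global sections on $\PB^{1}$ whose nonzero elements have the evident zero loci (empty, resp. a single point).
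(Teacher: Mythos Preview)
Your argument is correct and, in fact, cleaner than the paper's. Both proofs dualize the defining triangle and compute $\CRHom(\CO_{\ell},\CO_{X})\simeq\CO_{\ell}[-2]$ via Grothendieck duality, arriving at the same exact sequence
\[
0 \longrightarrow \CH^{0}(\DD(A)) \longrightarrow \CO_{X} \xrightarrow{\ \rho\ } \CO_{\ell}(kH) \longrightarrow \CH^{1}(\DD(A)) \longrightarrow 0
\]
with $k=0$ in case (i) and $k=1$ in case (ii). The difference lies in how the crucial non-vanishing $\rho\neq 0$ is obtained. The paper invokes the $\sigma_{\alpha,\beta}$-semistability of $A$ and \cite[Proposition~5.1.3]{BMT14} to deduce that $\CH^{1}(\DD(A))$ is supported in dimension~$0$, which forces $\rho\neq 0$ a posteriori. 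You instead observe directly that $\rho$ is (up to shift) the image under the autoequivalence $\DD$ of the extension class $e$, so $e\neq 0\Rightarrow\rho\neq 0$; this bypasses the semistability hypothesis entirely, as you note. Your route is more elementary and shows the statement holds for any non-trivial extension of this shape, while the paper's route has the mild advantage of packaging $\CH^{1}(\DD(A))$ as the zero-dimensional sheaf $T$ from the start, which is a formulation reused elsewhere in Section~\ref{Charact-tor-sh}.
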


\begin{proof}
In both cases, since $A$ is $\sigma_{\alpha,\beta}$-semistable and $\mu_{\alpha,\beta}(A)\neq \infty$, 
by \cite[Proposition 5.1.3]{BMT14},
there is an exact triangle 
$$
\xymatrix@C=0.5cm{
 \tilde{A} \ar[r] & \DD(A) \ar[r] & T[-1] \ar[r] & \tilde{A}[1],
}
$$ 
where $\tilde{A}$ is a $\sigma_{\alpha,-\beta}$-semistable object  and $T$ is a torsion sheaf supported in $0$-dimension. 
Taking cohomology sheaves,  
we have $\CH^{-1}(\tilde{A})\cong \CH^{-1}(\DD(A))$, 
$\CH^{0}(\tilde{A})\cong \CH^{0}(\DD(A))$ and $\CH^{1}(\DD(A))\cong T$.

{\bf Case} (i).
By Serre duality, we have $\Ext^{1}(\CO_{\ell},\CO_{X}[1]) \cong \Ext^{1}(\CO_{X},\CO_{\ell}(-2H))=\CN$.
Thus, the non-trivial extension $A$ is unique.
By Grothendieck--Verdier duality (c.f. \cite[Theorem 3.34]{Huy06}), 
we get 
$$
\CRHom(\CO_{\ell},\CO_{X}) \cong \CRHom(\CO_{\ell},\CO_{\ell}[-2])=\CO_{\ell}[-2].
$$
 Applying $\CRHom(-,\CO_{X})[1]$ to the defining exact sequence of $A$ and taking cohomology sheaves, 
we get $\CH^{-1}(\DD(A))=0$ and an exact sequence
 $$
\xymatrix@C=0.5cm{
0 \ar[r] & \CH^{0}(\DD(A)) \ar[r] & \CO_{X} \ar[r]^{\rho_{1}} & \CO_{\ell} \ar[r] &  \CH^{1}(\DD(A)) \ar[r] & 0.
}
$$ 
Since $ \CH^{1}(\DD(A))$ is a torsion sheaf supported in points,
thus $\rho_{1}$ must be non-zero and thus it is surjective.
Hence, we get $T=\CH^{1}(\DD(A))=0$ and thus $\CH^{0}(\DD(A))=I_{\ell}$.
This implies $\DD(A)\cong \tilde{A}\cong I_{\ell}$.
  
{\bf Case} (ii). 
Likewise, we obtain $\CH^{-1}(\DD(A))=0$ and an exact sequence
 $$
\xymatrix@C=0.5cm{
0 \ar[r] & \CH^{0}(\DD(A)) \ar[r] & \CO_{X} \ar[r]^{\rho_{2}\;\;\;} & \CO_{\ell}(H) \ar[r] &  \CH^{1}(\DD(A)) \ar[r] & 0,
}
$$ 
where $\rho_{2}$ is also non-zero.
Note that $\rho_{2}$ can not be surjective.
Since $\mathrm{im}(\rho_{2})$ is a structure sheaf, 
so $ \CH^{1}(\DD(A))\cong \CO_{p}$ for some point $p\in X$ and thus $\mathrm{im}(\rho_{2})\cong \CO_{\ell}$.
From this, we deduce $\CH^{0}(\DD(A))=I_{\ell}$.
\end{proof}

Then we will give some walls analysis in tilt stability for objects of Chern character $(0,H,\frac{5}{6}H^{2},-\frac{1}{6}H^{3})$.

\begin{lem}\label{unique-wall-for-tor-sh}
The semicircle $W(\frac{5}{6},\frac{1}{6})$ is the unique possible wall in tilt stability for objects $G$ with Chern character $\ch_{\leq 2}(G)=(0,H,\frac{5}{6}H^{2})$.
If $\ch_{3}(G)=-\frac{1}{6}H^{3}$ and $G$ is strictly tilt semistable along $W(\frac{5}{6},\frac{1}{6})$, 
then any Jordan--H\"{o}lder filtration of $G$ is given by
\begin{itemize}
\item
either 
$
\xymatrix@C=0.3cm{
0 \ar[r] & I_{p}(H) \ar[r] & G \ar[r] & A \ar[r] & 0
}
$
or 
$
\xymatrix@C=0.3cm{
0 \ar[r] & A \ar[r] & G \ar[r] & I_{p}(H) \ar[r] & 0,
}
$ 
where $A$ as in Lemma \ref{ssA-char-lem} (i), $p$ is a point and $\ell $ is a line;
\item
$
\xymatrix@C=0.3cm{
0 \ar[r] & \CO_{X}(H) \ar[r] & G \ar[r] & A \ar[r] & 0
}
$
or
$
\xymatrix@C=0.3cm{
0 \ar[r] & A \ar[r] & G \ar[r] & \CO_{X}(H) \ar[r] & 0,
}
$
where $A$ as in Lemma \ref{ssA-char-lem} (ii) and $p\in \ell$,
\end{itemize}
where $\CH^{-1}(G)=\CO_{X}$ and $\CH^{-1}(\DD(G))=\CO_{X}(-H)$.
\end{lem}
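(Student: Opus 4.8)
Both assertions will follow from a wall analysis in tilt stability, in the spirit of the proofs of Lemmas~\ref{mainwall1-comput} and \ref{auxiliary-lem2}.

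\emph{The unique possible wall.} Since $\ch_{0}(G)=0$ and $H^{2}\cdot\ch_{1}(G)=H^{3}\neq 0$, Theorem~\ref{tilt-wall-struct-thm}(3) shows that every numerical wall for $G$ is a semicircle with apex on $\beta=\frac{H\cdot\ch_{2}(G)}{H^{2}\cdot\ch_{1}(G)}=\frac{5}{6}$, i.e.\ of the form $W(\frac{5}{6},\mathrm{r})$. Suppose such a wall is actual, realized by a short exact sequence $0\to A\to G\to B\to 0$ of $\sigma_{\alpha,\beta}$-semistable objects; write $\ch_{\leq 2}(A)=(a,bH,\frac{c}{6}H^{2})$, so $\ch_{\leq 2}(B)=(-a,(1-b)H,\frac{5-c}{6}H^{2})$. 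If $a=0$, then $A,B\in\Coh^{\beta}(X)$ forces $0\leq b\leq 1$, so one of $A,B$ is supported in dimension $0$ and has infinite tilt-slope, contradicting $\mu_{\alpha,\beta}(G)\neq\infty$; hence $a\neq 0$, and exchanging $A$ with $B$ we may assume $a>0$. Now the restrictions of Remark~\ref{actual-wall-restriction} --- $\Delta_{H}(A),\Delta_{H}(B)\geq 0$ and $\Delta_{H}(A)+\Delta_{H}(B)\leq\Delta_{H}(G)=(H^{3})^{2}$ --- together with $A,B\in\Coh^{\beta}(X)$ along the wall and Proposition~\ref{Li-bound} applied to the $\sigma_{\alpha,\beta}$-stable factors of $A$ and of $B$, cut $(a,b,c)$ down to a short explicit list. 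Computing $W(G,A)$ in each remaining case, I expect it to be either empty, or placed so that $A$ (or $B$) becomes, via Proposition~\ref{limit-tilt-stab-Gie-stab-prop}, a $\mu_{H}$-slope stable sheaf whose Chern character is excluded by Proposition~\ref{Li-bound} or by non-existence results such as \cite[Proposition~6.6]{BBF+}, or else to coincide with $W(\frac{5}{6},\frac{1}{6})$.

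\emph{The Jordan--Hölder filtration.} Assume now $\ch_{3}(G)=-\frac{1}{6}H^{3}$ and $G$ strictly $\sigma_{\alpha,\beta}$-semistable on $W(\frac{5}{6},\frac{1}{6})$; fix a point there and a Jordan--Hölder filtration of $G$ with $\sigma_{\alpha,\beta}$-stable factors $F_{1},\dots,F_{n}$, all of slope $\mu_{\alpha,\beta}(G)=\beta-\frac{5}{6}$. Repeating the bookkeeping of the previous paragraph for each $F_{i}$ --- with $\Delta_{H}(F_{i})\geq 0$, $\sum_{i}\Delta_{H}(F_{i})\leq\Delta_{H}(G)$, $\widetilde{v}_{H}(F_{i})\notin R_{\frac{1}{4}}$ when $\ch_{0}(F_{i})\neq 0$, $F_{i}\in\Coh^{\beta}(X)$ of the prescribed slope, and $\sum_{i}\ch(F_{i})=\ch(G)$ --- I would conclude that no $F_{i}$ is supported in dimension $\leq 1$, that $n=2$, and that, up to reordering, $\ch_{\leq 2}(F_{1})=(1,H,\frac{1}{2}H^{2})$ with $\Delta_{H}(F_{1})=0$ and $\ch_{\leq 2}(F_{2})=(-1,0,\frac{1}{3}H^{2})$. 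Using $\ch_{3}(G)=-\frac{1}{6}H^{3}$ and tilt-stability of both factors I then pin down $\ch_{3}$ of each: a rank-one tilt-stable object with $\Delta_{H}=0$ and $\ch_{1}=H$ is $\CO_{X}(H)$ or a twisted ideal sheaf $I_{Z}(H)$ with $Z$ zero-dimensional, and I expect $|Z|\geq 2$ to be excluded (otherwise $F_{2}$ would be a shift $I_{W}[1]$ of the ideal sheaf of a line together with at least one point, which cannot be tilt-stable alongside $F_{1}$ on this wall), leaving $F_{1}\in\{\CO_{X}(H),I_{p}(H)\}$ and the Chern character of $F_{2}$ equal to that of the object $A$ in Lemma~\ref{ssA-char-lem}(ii) when $F_{1}=\CO_{X}(H)$, resp.\ of Lemma~\ref{ssA-char-lem}(i) when $F_{1}=I_{p}(H)$. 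To identify $F_{2}$ itself I pass to $\DD(F_{2})=\CRHom(F_{2},\CO_{X})[1]$, which by \cite[Proposition~5.1.3]{BMT14} is, up to a point-supported correction, $\sigma_{\alpha,-\beta}$-semistable with the Chern character of $I_{\ell}$ minus a point, resp.\ of $I_{\ell}$; hence it is the two-term complex with $\CH^{0}=I_{\ell}$ and $\CH^{1}=\CO_{p}$, $p\in\ell$, resp.\ $I_{\ell}$ itself, and dualizing back together with the $\Ext^{1}$-computation in the proof of Lemma~\ref{ssA-char-lem} shows that $F_{2}$ is exactly the unique non-split extension described there. Finally, depending on which of $F_{1},F_{2}$ is the subobject, the filtration is one of the four displayed short exact sequences; taking the cohomology long exact sequence of the one in which $F_{2}$ is a subobject gives $\CH^{-1}(G)=\CH^{-1}(F_{2})=\CO_{X}$, and applying $\CRHom(-,\CO_{X})$ to that sequence gives $\CH^{-1}(\DD(G))=\CO_{X}(-H)$.

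\emph{The main obstacle} is the case analysis of the first part: with $\ch_{0}(G)=0$ there is no vertical wall to anchor the argument, so every candidate semicircle $W(\frac{5}{6},\mathrm{r})$ must be ruled out by hand through the Bogomolov-type inequalities, Proposition~\ref{Li-bound} and the auxiliary non-existence results --- lengthy but routine. A secondary difficulty, in the second part, is upgrading the identification of $F_{2}$ from its Chern character to the precise extension of Lemma~\ref{ssA-char-lem}, which is where the derived-dual and $\Ext$ computations enter and where one must also exclude $F_{1}=I_{Z}(H)$ with $|Z|\geq 2$.
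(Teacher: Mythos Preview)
Your overall strategy matches the paper's: both parts are a wall analysis at the vertical line $\beta=\frac{5}{6}$, followed by identifying the two Jordan--H\"older factors. The paper's execution is tighter in two places where your plan has genuine gaps.

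\textbf{Excluding $|Z|\geq 2$.} Your argument that ``$F_{2}$ would be a shift $I_{W}[1]$ of the ideal sheaf of a line together with at least one point, which cannot be tilt-stable alongside $F_{1}$ on this wall'' is circular: at this stage you only know $\ch(F_{2})=(-1,0,\frac{1}{3}H^{2},\frac{k-1}{3}H^{3})$ with $k=|Z|$, not that $F_{2}$ has any particular shape, and nothing in the tilt-slope condition on $W(\frac{5}{6},\frac{1}{6})$ alone rules out $k\geq 2$. The paper instead applies the $\ch_{3}$-Bogomolov--Gieseker inequality of \cite[Theorem~0.1]{Li19} (cf.\ \cite[Theorem~2.10]{FP23}) directly to the tilt-semistable object $A=F_{2}$, which forces $k\in\{0,1\}$. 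You will need this inequality; the constraints coming from $\Delta_{H}$ and Proposition~\ref{Li-bound} only see $\ch_{\leq 2}$.

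\textbf{Identifying $F_{2}$ and the cohomology of $G$.} For $F_{2}$, the paper uses \cite[Proposition~4.9]{BBF+} to get $\CH^{-1}(F_{2})\cong\CO_{X}$ and $\CH^{0}(F_{2})=:T_{A}$ supported in dimension $\leq 1$, and then the key point is $\CExt^{3}(T_{A},\CO_{X})=0$ (from \cite[Proposition~5.1.3(b)]{BMT14}), which forces $T_{A}$ to be pure one-dimensional, hence $\CO_{\ell}$ or $\CO_{\ell}(-H)$; your dual-side approach can be made to work but needs this purity input too. More importantly, your computation of $\CH^{-1}(G)=\CO_{X}$ and $\CH^{-1}(\DD(G))=\CO_{X}(-H)$ only treats the two filtrations in which $F_{2}$ is the \emph{subobject}. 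In the other two cases (e.g.\ $0\to I_{p}(H)\to G\to A\to 0$), the cohomology long exact sequence does not immediately give $\CH^{-1}(G)$; the paper handles these by first applying $\CRHom(-,\CO_{X})[1]$ to compute $\CH^{-1}(\DD(G))=\CO_{X}(-H)$, then invoking the triangle $\tilde{G}\to\DD(G)\to T[-1]$ of \cite[Proposition~5.1.3]{BMT14} and dualizing back to recover $\CH^{-1}(G)=\CO_{X}$. You will need to supply this half of the argument.
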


\begin{proof}
Note that all numerical walls for $\ch_{\leq2}(G)=(0,H,\frac{5}{6}H^{2})$ intersect with the vertical line $\beta=\frac{5}{6}$.
Suppose that there is an actual wall induced by the short exact sequence of $\sigma_{\alpha,\frac{5}{6}}$-semistable objects in $\Coh^{\frac{5}{6}}(X)$
\begin{equation}\label{wall-SES-torsion-sh}
\xymatrix@C=0.5cm{
0 \ar[r] & A \ar[r] & G \ar[r] & B\ar[r] & 0.
}
\end{equation}
We write $\ch^{\frac{5}{6}}_{\leq 2}(A):=(a,\frac{b}{6}H,\frac{c}{72}H^{2})$, where $a,b,c\in \ZN$.
Since $0< H^{2}\cdot \ch^{\frac{5}{6}}_{1}(A)=\frac{b}{6}H^{3}<H^{2}\cdot \ch^{\frac{5}{6}}_{1}(G)=H^{3}$,
 we get $b\in \{1,2,3,4,5\}$.
 By changing the roles of $A$ and $B$, 
 we may assume $a\leq -1$.
By a direct computation, 
we have the following three possible solutions for \eqref{wall-SES-torsion-sh}:
\begin{enumerate}
  \item[(i)] $b=3$, $a=-3$, $c=-3$, and then $\mathrm{ch}_{\leq  2}(A)=(-3,-2H,-\frac{2}{3}H^{2})$;
  \item[(ii)] $b=4$, $a=-2$, $c=-2$, and then $\mathrm{ch}_{\leq  2}(A)=(-2,-H,-\frac{1}{6}H^{2})$;
  \item[(iii)] $b=5$,  $a=-1$, $c=-1$, and $\ch_{\leq 2}(A)=(-1,0,\frac{1}{3}H^{2})$.
\end{enumerate}
The first two cases are ruled out immediately by Proposition \ref{Li-bound}.

In the following, we will discuss the case (iii) in details.
In this case, $\ch_{\leq 2}(B)=(1,H,\frac{1}{2}H^{2})$ and the possible wall for $G$ is the semicircle $W(\frac{5}{6},\frac{1}{6})$. 
Since $B$ has no walls, so $B$ is a $\mu_{H}$-slope stable sheaf.
Note that $\ch(B(-H))=(1,0,0,\ch_{3}(B)-\frac{1}{6}H^{3})$.
Thanks to \cite[Proposition 4.20]{BBF+}, 
we get $\ch_{3}(B)-\frac{1}{6}H^{3}\leq 0$ and thus $B\cong I_{Z}(H)$, where $Z$ is zero dimensional scheme of length $\frac{1}{6}H^{3}-\ch_{3}(B)$. 
Note that $A$ has no walls along $\beta=1$ and $\mu_{H}(A)=0<\frac{5}{6}$.
Since $\beta_{+}(A)<1$ and $W(\frac{5}{6},\frac{1}{6})$ tangent to $\beta=1$,
by \cite[Proposition 4.9]{BBF+}, 
we have $\mathcal{H}^{-1}(A)$ is a $\mu_{H}$-slope stable reflexive sheaf of rank $1$ and
$\mathcal{H}^{0}(A)$ is a torsion sheaf supported in dimension at most $1$.
Hence,  $\mathcal{H}^{-1}(A)$ is a line bundle. 
Since $\ch_{1}(\mathcal{H}^{-1}(A))=0$, 
thus $\mathcal{H}^{-1}(A)\cong \CO_{X}$ and 
$A$ fits in a non-trivial extension
\begin{equation}\label{G-A-def-ses}
\xymatrix@C=0.5cm{
0 \ar[r] & \CO_{X}[1] \ar[r] & A \ar[r] & T_{A} \ar[r] & 0,
}
\end{equation}
where $T_{A}:=\mathcal{H}^{0}(A)$ is torsion sheaf with $\ch_{\leq 2}(T_{A})=(0,0,\frac{1}{3}H^{2})$. 

If moreover $\ch_{3}(G)=-\frac{1}{6}H^{3}$,
then $\ch_{3}(\CO_{Z}):=\frac{k}{3}H^{3}=\frac{1}{3}H^{3}+\ch_{3}(T_{A})$,
where $k\geq 0$.
Thus, we have $\ch(T_{A})=(0,0,\frac{1}{3}H^{2}, \frac{k-1}{3}H^{3})$ and $\ch(A)=(-1,0,\frac{1}{3}H^{2}, \frac{k-1}{3}H^{3})$.
Since $A$ is tilt semistable, 
by \cite[Theorem 0.1]{Li19} (cf. \cite[Theorem 2.10]{FP23}), 
the integer $k$ must be $0$ or $1$.
Furthermore, by \cite[Proposition 5.1.3 (b)]{BMT14}, we have $\CExt^{3}(A,\CO_{X})=0$.
By \eqref{G-A-def-ses}, we get $\CExt^{3}(T_{A},\CO_{X})\cong \CExt^{3}(A,\CO_{X})=0$.
Therefore, we have the following two cases:
\begin{itemize}
\item  If $k=1$, then $\ch(T_{A})=(0,0,\frac{1}{3}H^{2}, 0)$ and $Z=p\in X$ is a point.
Since $\CExt^{3}(T_{A},\CO_{X})=0$, 
so the support of $T_{A}$ must be a line $\ell \subset X$ and thus $T_{A}$ is the direct image of a vector bundle on $\ell$. 
According to the Chern character of $T_{A}$, such a vector bundle must be $\CO_{\ell}$ and
thus $T_{A}=\CO_{\ell}$.

\item If $k=0$, then $\ch(T_{A}(H))=(0,0,\frac{1}{3}H^{2}, 0)$ and $I_{Z}=\CO_{X}$. 
Note that $\CExt^{3}(T_{A}(H),\CO_{X})\cong \CExt^{3}(T_{A},\CO_{X})\otimes \CO_{X}(-H)=0$.
As the case $k=1$, it follows that $T_{A}(H)\cong \CO_{\ell}$ and $T_{A} \cong \CO_{\ell}(-H)$ for a line $\ell\subset X$.
\end{itemize}
As a result, by changing the roles of $A$ and $B$, 
we obtain the following four possible situations:
\begin{enumerate}
\item[(1)] 
$A$ is the unique non-trivial extension
$
\xymatrix@C=0.3cm{
0 \ar[r] & \CO_{X}[1] \ar[r] & A \ar[r] & \CO_{\ell}\ar[r] & 0
}
$
and $G$ fits into the short exact sequence
$
\xymatrix@C=0.3cm{
0 \ar[r] & I_{p}(H) \ar[r] & G \ar[r] & A \ar[r] & 0;
}
$
\item[(2)]  
$A$ is the unique non-trivial extension
$
\xymatrix@C=0.3cm{
0 \ar[r] & \CO_{X}[1] \ar[r] & A \ar[r] & \CO_{\ell}\ar[r] & 0
}
$
and $G$ fits into the short exact sequence
$
\xymatrix@C=0.43cm{
0 \ar[r] & A \ar[r] & G \ar[r] & I_{p}(H) \ar[r] & 0;
}
$
\item[(3)] 
$A$ fits into a non-trivial extension
$
\xymatrix@C=0.3cm{
0 \ar[r] & \CO_{X}[1] \ar[r] & A \ar[r] & \CO_{\ell}(-H)\ar[r] & 0
}
$
and $G$ fits into the short exact sequence
$
\xymatrix@C=0.3cm{
0 \ar[r] & \CO_{X}(H) \ar[r] & G \ar[r] & A \ar[r] & 0;
}
$
\item[(4)]  
$A$ fits into a non-trivial extension
$
\xymatrix@C=0.3cm{
0 \ar[r] & \CO_{X}[1] \ar[r] & A \ar[r] & \CO_{\ell}(-H)\ar[r] & 0
}
$
and $G$ fits into the short exact sequence
$
\xymatrix@C=0.3cm{
0 \ar[r] & A \ar[r] & G \ar[r] & \CO_{X}(H) \ar[r] & 0.
}
$
\end{enumerate}
Moreover, by using tilt semistability of $G$ and $A$, 
we will obtain more constrains case-by-case.

{\bf Case} (1). 
Applying $\CRHom(-,\CO_{X})[1]$ and taking cohomology sheaves, 
we get $ \CH^{1}(\DD(G))\cong \CO_{p}$ and 
$$
\xymatrix@=0.5cm{
0 \ar[r] & \CH^{-1}(\DD(G)) \ar[r] & \CO_{X}(-H) \ar[r]^{\;\;\;\;\;\;\;\varphi_{1}} & I_{\ell} \ar[r] & \CH^{0}(\DD(G)) \ar[r] & 0.
}
$$

If $\varphi_{1}\neq 0$, then it is injective.
Thus we have $\CH^{-1}(\DD(G))=0$, $\CH^{0}(\DD(G))=I_{\ell/Y}$ for some $Y\in |H|$ and $ \CH^{1}(\DD(G))\cong \CO_{p}$.
By \cite[Proposition 5.1.3]{BMT14}, 
there is an exact triangle $\xymatrix@=0.3cm{I_{\ell/Y} \ar[r] & \DD(G) \ar[r] & \CO_{p}[-1] \ar[r] & I_{\ell/Y}[1]}$. 
Applying $\CRHom(-,\CO_{X})[1]$ and taking cohomology sheaves, 
we get  $ \CH^{-1}(G)=0$ and a short exact sequence 
$$
\xymatrix@=0.5cm{
0 \ar[r] & G \ar[r] & \CO_{Y}(H)\oplus\CO_{\ell} \ar[r] & \CO_{p} \ar[r] & 0.
}
$$
Then applying $\CRHom(-,\CO_{X})[1]$ and taking cohomology sheaves, 
it follows that $\CH^{0}(\DD(G))=\CO_{Y}$, a contradiction.
Thus, we get $\varphi_{1}=0$. 
 As a result, we obtain $\CH^{-1}(\DD(G))=\CO_{X}(-H)$, $\CH^{0}(\DD(G))=I_{\ell}$ and $\CH^{1}(\DD(G))=\CO_{p}$.
Since $G$ is tilt semistable, 
by \cite[Proposition 5.1.3]{BMT14},
there is an exact triangle 
$$
\xymatrix@C=0.3cm{
 \tilde{G} \ar[r] & \DD(G) \ar[r] & T[-1] \ar[r] & \tilde{G}[1].
}
$$
Applying $\CRHom(-,\CO_{X})[1]$ and taking cohomology sheaves, 
we obtain $\CH^{-1}(G)=\CO_{X}$.

{\bf Case} (2). 
Clearly, we have $\CH^{-1}(G)=\CO_{X}$.
Applying $\CRHom(-,\CO_{X})[1]$ and taking  cohomology sheaves, 
we get $\CH^{-1}(\DD(G))\cong \CO_{X}(-H)$ and 
$$
\xymatrix@=0.5cm{
0 \ar[r] & \CH^{0}(\DD(G)) \ar[r] & I_{\ell} \ar[r]^{ \varphi_{2}} & \CO_{p} \ar[r] & \CH^{1}(\DD(G)) \ar[r] & 0.
}
$$
By \cite[Proposition 5.1.3]{BMT14}, 
there is an exact triangle $\xymatrix@=0.3cm{\tilde{G} \ar[r] & \DD(G) \ar[r] & T[-1] \ar[r] & \tilde{G}[1]}$.
Then applying $\CRHom(-,\CO_{X})[1]$ and taking cohomology sheaves, 
we get $ \CH^{-1}(\tilde{G})\cong \CH^{-1}(\DD(G))\cong \CO_{X}(-H)$, $\CH^{0}(\DD(G))\cong \CH^{0}(\DD(G))$ and $ \CH^{1}(\DD(G))\cong T\cong \CO_{p}^{\oplus k}$.
Then $k=0$ or $k=1$.

{\bf Case} (3). 
Applying $\CRHom(-,\CO_{X})[1]$ and taking cohomology sheaves, 
by Lemma \ref{ssA-char-lem} (ii), we get $ \CH^{1}(\DD(G))\cong \CO_{p}$ and 
$$
\xymatrix@=0.5cm{
0 \ar[r] & \CH^{-1}(\DD(G)) \ar[r] & \CO_{X}(-H) \ar[r]^{\;\;\;\;\varphi_{3}} & I_{\ell} \ar[r] & \CH^{0}(\DD(G)) \ar[r] & 0.
}
$$
 The rest is the same as {\bf Case} (1).
 Thus we have  $\CH^{-1}(G)=\CO_{X}$, $\CH^{-1}(\DD(G))=\CO_{X}(-H)$, $\CH^{0}(\DD(G))=I_{\ell}$ and $\CH^{1}(\DD(G))=\CO_{p}$.

{\bf Case} (4).
Clearly, $\CH^{-1}(G)=\CO_{X}$.
Applying $\CRHom(-,\CO_{X})[1]$ and taking cohomology sheaves, 
Lemma \ref{ssA-char-lem} (ii) implies $ \CH^{-1}(\DD(G))\cong \CO_{X}(-H)$, $\CH^{0}(\DD(G))\cong I_{\ell}$ and $ \CH^{1}(\DD(G))\cong \CO_{p}$.
\end{proof}

Next, we have the characterization for the tilt stable objects.

\begin{prop}\label{torsion-sh-stable-case}
If $G$ is a $\sigma_{\alpha,\beta}$-stable object for all $(\alpha,\beta)$ with Chern character $\ch(G)=(0,H,\frac{5}{6}H^{2},-\frac{1}{6}H^{3})$,
then $G\cong \CO_{Y}(D)$ for some Weil divisor $D$ on a hyperplane section $Y\in |H|$.
\end{prop}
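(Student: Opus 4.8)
\emph{The plan is the following.} I would show in three steps that such a $G$ must be the pushforward along $j\colon Y\hookrightarrow X$ of a rank-one reflexive sheaf on a hyperplane section $Y$, and then identify such sheaves with the $\CO_Y(D)$. \emph{Step 1 (structure of $G$).} Since $G$ is $\sigma_{\alpha,\beta}$-stable for every $(\alpha,\beta)$, it is in particular stable for $\alpha\gg 0$; for objects with $\ch_0=0$ and $H^{2}\cdot\ch_1>0$ the large-volume limit (the torsion version of Proposition~\ref{limit-tilt-stab-Gie-stab-prop}, cf.\ \cite{BMS16}) shows that $G$ is a $2$-$H$-Gieseker semistable \emph{sheaf}, so $\CH^{-1}(G)=0$; a subsheaf of $G$ supported in dimension $\le 1$ would have infinite tilt slope and destabilise $G$, so $G$ is pure of dimension two; and because $\ch_1(G)=H$ is primitive in $\mathrm{NS}(X)=\ZN H$, its scheme-theoretic support is an integral cubic surface $Y\in|H|$, whence $G=j_{\ast}\mathcal{F}$ for a rank-one torsion-free sheaf $\mathcal{F}$ on $Y$. (Lemma~\ref{unique-wall-for-tor-sh} is used here to control the only wall $W(\tfrac56,\tfrac16)$: on it exactly the complexes with $\CH^{-1}=\CO_X$ are strictly semistable, so a stable $G$ cannot degenerate there.)

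\emph{Step 2 (reduce reflexivity to the derived dual).} By Grothendieck--Verdier duality along the divisor $Y$, using $N_{Y/X}=\CO_Y(H)$, one has $\DD(G)=\CRHom(G,\CO_X)[1]=j_{\ast}\,\CRHom_Y(\mathcal{F},\CO_Y(H))$, whose cohomology sheaves are $\CH^{0}(\DD(G))=j_{\ast}(\mathcal{F}^{\vee}(H))$, the pushforward of a rank-one reflexive sheaf $\CO_Y(D^{\ast})$ with $D^{\ast}\cdot H=-1$, and $\CH^{1}(\DD(G))=j_{\ast}\,\CExt^{1}_Y(\mathcal{F},\CO_Y(H))=:T$, a zero-dimensional sheaf which vanishes precisely when $\mathcal{F}$ is reflexive. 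Since $G$ is $\sigma_{\alpha,\beta}$-stable, \cite[Proposition~5.1.3]{BMT14} (exactly as in the proof of Lemma~\ref{ssA-char-lem}) yields a triangle $\widetilde{G}\to\DD(G)\to T[-1]\to\widetilde{G}[1]$ with $\widetilde{G}=j_{\ast}(\mathcal{F}^{\vee}(H))$ tilt semistable, and $\widetilde{G}$ again inherits $\sigma_{\alpha,\beta}$-stability for all $(\alpha,\beta)$. Thus it suffices to prove $T=0$.

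\emph{Step 3 (the defect vanishes).} Comparing Chern characters gives $\ch_3(\widetilde{G})=\ch_3(\DD(G))+\ell\cdot[\mathrm{pt}]$ with $\ell=\mathrm{length}(T)$, and Riemann--Roch on $Y$ (with $K_Y=-H|_Y$) turns this into $(D^{\ast})^{2}=2\ell-3$. The Hodge index theorem on $Y$ forces $(D^{\ast})^{2}\le 0$, and the parity relation $(D^{\ast})^{2}\equiv D^{\ast}\cdot K_Y=1\ (\mathrm{mod}\ 2)$ leaves only $(D^{\ast})^{2}\in\{-1,-3\}$, i.e.\ $\ell\in\{0,1\}$. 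To exclude $\ell=1$ one exploits that $\widetilde{G}$ is $\sigma_{\alpha,\beta}$-stable with $\ch_{\le 2}(\widetilde{G})=(0,H,-\tfrac56 H^{2})$: the sharp bound on $\ch_3$ of tilt-stable objects (\cite[Theorem~0.1]{Li19}, cf.\ \cite[Theorem~2.10]{FP23}), together with a wall analysis for $\widetilde{G}$ and cohomology vanishings along the relevant wall $W(-\tfrac56,\tfrac16)$ carried out as in Lemma~\ref{unique-wall-for-tor-sh}, rules out the larger value, so $\ell=0$. Hence $T=0$, $\mathcal{F}\cong\mathcal{F}^{\vee\vee}$ is reflexive, $Y$ is normal, and a rank-one reflexive sheaf on the normal surface $Y$ is $\CO_Y(D)$ for a Weil divisor $D$, which is the claim.

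\emph{The main obstacle is Step 3.} The non-reflexive competitors, namely $j_{\ast}\big(\CO_Y(D)\otimes I_{p/Y}\big)$ with $D$ a quartic divisor class on $Y$ of arithmetic genus one and $p\in Y$, have exactly the same $\ch_{\le 2}$ \emph{and} $\ch_3$ as $G$, so no invariant of $G$ alone can eliminate them; one is forced to pass to the derived dual $\widetilde{G}$ and play off the zero-dimensional defect $T$ against the tilt-stability and the wall structure of $\widetilde{G}$. This is precisely where the wall-and-chamber analysis of the preceding lemmas and the $\ch_3$-bounds of \cite{Li19} are indispensable.
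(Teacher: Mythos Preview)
Your overall architecture matches the paper's: pass to the derived dual, use the triangle $\tilde{G}\to\DD(G)\to T[-1]$ of \cite[Proposition~5.1.3]{BMT14}, and argue that the zero-dimensional defect $T$ vanishes. The divergence, and the gap, lies in Step~3.

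First, the bound $\ell\le 1$ via Hodge index and parity on $Y$ presupposes $Y$ is smooth (you only assert normality at the very end, and even that is not argued); singular hyperplane sections of $X$ do occur. The paper sidesteps this entirely by staying on $X$: it twists $\tilde{G}$ by $\CO_X(H)$ and applies the tilt-stability $\ch_3$-bound of Lemma~\ref{appendix-unique-wall1-prop} (for objects with $\ch_{\le 2}=(0,H,\tfrac{1}{6}H^2)$) to obtain $\ell\le 1$ without any hypothesis on $Y$.

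Second, and this is the real gap, your exclusion of $\ell=1$ cannot work as sketched. When $\ell=1$ one computes $\ch(\tilde{G}(H))=(0,H,\tfrac{1}{6}H^2,-\tfrac{1}{6}H^3)$, which \emph{saturates} the $\ch_3$-bound of Lemma~\ref{appendix-unique-wall1-prop}; so no $\ch_3$-inequality applied to $\tilde{G}$ alone yields a contradiction, and neither does a wall analysis for $\tilde{G}$, since tilt-stable sheaves with this character genuinely exist. The paper's argument is different in kind and feeds the information back to $G$: Lemma~\ref{appendix-unique-wall1-prop} identifies the equality case, placing $\tilde{G}(H)$ in a short exact sequence $0\to I_{\ell}(H)\to\tilde{G}(H)\to\CO_X[1]\to 0$ for some line $\ell$; tracing this through the triangle and dualising produces a nonzero morphism $G\to A$ with $A$ as in Lemma~\ref{ssA-char-lem}(i), and by Lemma~\ref{unique-wall-for-tor-sh} this makes $G$ strictly semistable along $W(\tfrac{5}{6},\tfrac{1}{6})$, contradicting the hypothesis that $G$ is $\sigma_{\alpha,\beta}$-stable for \emph{every} $(\alpha,\beta)$. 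Your sketch never returns to $G$; that return is the crux. (A minor further issue: the claim that $\tilde{G}$ ``again inherits $\sigma_{\alpha,\beta}$-stability for all $(\alpha,\beta)$'' is not what \cite[Proposition~5.1.3]{BMT14} provides and would need separate justification.)
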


\begin{proof}
Since $G$ is $\sigma_{\alpha,\beta}$-stable, 
by \cite[Proposition 5.1.3]{BMT14}, 
there is an exact triangle 
\begin{equation}\label{dual-ss-stable}
\xymatrix@C=0.5cm{
\tilde{G} \ar[r] & \DD(G) \ar[r] &T[-1] \ar[r] &  \tilde{G}[1],
}
\end{equation}
where $\DD(G):=\CRHom(G,\CO_{X})[1]$, $T$ is a torsion sheaf supported in dimension zero and $\tilde{G}\in \Coh^{-\beta}(X)$ is $\sigma_{\alpha,-\beta}$-semistable.
We may assume $\ch_{3}(T)=\frac{s}{3}H^{3}$, where $s\geq 0$.
Then we have $\ch(\tilde{G})=(0,H,-\frac{5}{6}H^{2},(-\frac{1}{6}+\frac{s}{3})H^{3})$ and thus $\tilde{G}$ is a pure sheaf on a hyperplane section $Y\in |H|$.
Then the twisted Chern character $\ch(\tilde{G}(H))=(0,H,\frac{1}{6}H^{2},(-\frac{1}{2}+\frac{s}{3})H^{3})$.
Since $\tilde{G}(H)$ is tilt semistable, by Lemma \ref{appendix-unique-wall1-prop}, we have $-\frac{1}{2}+\frac{s}{3}\leq -\frac{1}{6}$.
Thus, we get $s=0$ or $s=1$.

If $s=0$, then $\DD(G)=\tilde{G}$ is a sheaf and thus $\mathcal{E}xt^{q}(G,\CO_{X})=0$ for $q>1$.
Then \cite[Proposition 1.1.10]{HL10} yields that $G$ is reflexive and thus $G\cong \CO_{Y}(D)$ for some Weil divisor $D$ on $Y$ (cf. \cite[Proposition 3.2]{BBF+}).

If $s=1$, then $\ch(\tilde{G}(H))=(0,H,\frac{1}{6}H^{2}, -\frac{1}{6} H^{3})$.
Note that $\tilde{G}(H)$ is $\sigma_{\alpha,0}$-stable for $\alpha>0$.
Then,  by Lemma \ref{appendix-unique-wall1-prop} below, 
we have a short exact sequence  
\begin{equation}\label{case-s=1}
\xymatrix@C=0.5cm{
0 \ar[r] & I_{\ell}(H) \ar[r] & \tilde{G}(H) \ar[r] &  \CO_{X}[1] \ar[r] & 0,
}
\end{equation}
where $\ell$ is a line in $X$.
Let $A$ be as in Lemma \ref{ssA-char-lem} (i).
Then $\DD(A) \cong I_{\ell}$.
Since $\Hom(I_{\ell},T[-2])=0=\Hom(I_{\ell},T[-1])$,
applying $\Hom(I_{\ell},-)$ to \eqref{dual-ss-stable},  
we get 
$$
\Hom(I_{\ell},\tilde{G})\cong \Hom(I_{\ell},\DD(G)).
$$
Thus, by \eqref{case-s=1}, 
we obtain a non-trivial morphism $I_{\ell}\rightarrow \DD(G)$.
Taking duality, we get a non-trivial morphism $G \rightarrow \DD(I_{\ell})=A$.
By Lemma \ref{unique-wall-for-tor-sh}, $G$ is tilt unstable along $W(\frac{5}{6},\frac{1}{6})$, 
a contradiction.
\end{proof}

Next, we will prove Lemma \ref{appendix-unique-wall1-prop} which used in the proof of Proposition \ref{torsion-sh-stable-case}.
For this, 
we need the following

\begin{lem}\label{appendix-unique-wall1-lem}
The wall $W(\frac{1}{6},\frac{1}{6})$ is the unique wall in tilt stability for objects $G$ with Chern character $\ch_{\leq 2}(G)=(0,H,\frac{1}{6}H^{2})$.
If  $G$ is strictly semistable along $W(\frac{1}{6},\frac{1}{6})$, 
then $\ch_{3}(G)\leq -\frac{1}{6}H^{3}$ and any Jordan--H\"{o}lder filtration of $E$ is given by either 
$$
\xymatrix@C=0.5cm{
0 \ar[r] & I_{Z}(H) \ar[r] & G \ar[r] &  \CO_{X}[1] \ar[r] & 0,}
\;\;
\textrm{ or }
\;\;
\xymatrix@C=0.5cm{
0 \ar[r] & \CO_{X}[1] \ar[r] & G \ar[r] &  I_{Z}(H) \ar[r] & 0,}
$$
where $I_{Z}$ is an ideal sheaf of a curve with $\ch(I_{Z})=(1,0,-\frac{1}{3}H^{2},\ch_{3}(G)+\frac{1}{6}H^{2})$.
\end{lem}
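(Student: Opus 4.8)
The plan is to run the same wall-analysis machinery used throughout Section \ref{Proof-main-result}, specialized to the Chern character $\ch_{\leq 2}(G)=(0,H,\frac16 H^2)$. Since $\ch_0(G)=0$ and $H^2\cdot\ch_1(G)\neq 0$, part (3) of Theorem \ref{tilt-wall-struct-thm} applies: every numerical wall for $G$ is a semicircle whose apex lies on the vertical line $\beta=\frac{H\cdot\ch_2(G)}{H^2\cdot\ch_1(G)}=\frac16$. So I would start by supposing an actual wall exists, given by a short exact sequence $0\to A\to G\to B\to 0$ of $\sigma_{\alpha,\frac16}$-semistable objects in $\Coh^{\frac16}(X)$, write $\ch^{\frac16}_{\leq 2}(A)=(a,\frac{b}{6}H,\frac{c}{72}H^2)$ with $a,b,c\in\ZN$, and use $0<H^2\cdot\ch^{\frac16}_1(A)<H^2\cdot\ch^{\frac16}_1(G)$ to bound $b$; since $H^2\cdot\ch^{\frac16}_1(G)=H^3$ this forces $b$ to be small (there is essentially only the value $b=\cdots$ to consider, as $\ch^{\frac16}_1(G)=H$ after the twist). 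Then $0\leq\Delta_H(A)\leq\Delta_H(G)$ and $\Delta_H(B)\geq 0$ (Remark \ref{actual-wall-restriction}) pin down $a$ and $c$ to a finite list, and most candidates are eliminated by Proposition \ref{Li-bound} exactly as in Lemmas \ref{auxiliary-lem1-1}, \ref{auxiliary-lem1-2} and \ref{unique-wall-for-tor-sh}. I expect the only surviving numerical wall to be the semicircle $W(\frac16,\frac16)$, with destabilizing factors of Chern characters $\ch_{\leq 2}=(1,0,-\frac13 H^2)$ and $(-1,0,\cdots)$, i.e. up to ordering an ideal-sheaf twist $I_Z(H)$ and the object $\CO_X[1]$.

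Once the wall is identified, the second half is a structural analysis of the strictly semistable objects along $W(\frac16,\frac16)$. Since $B$ (the quotient with positive rank, say) has no further walls, Proposition \ref{limit-tilt-stab-Gie-stab-prop} makes it a $\mu_H$-slope stable sheaf with $\ch(B(-H))=(1,0,0,\ast)$, and invoking \cite[Proposition 4.20]{BBF+} as in Lemma \ref{unique-wall-for-tor-sh} forces $B\cong I_Z(H)$ for a zero-dimensional or one-dimensional subscheme $Z$ with the stated Chern character. For the other factor $A$ with $\ch_{\leq 2}(A)=(-1,0,\cdots)$: since it has no walls and $\mu_H(A)=0<\frac16$, the cohomology-sheaf analysis of \cite[Proposition 4.9]{BBF+} shows $\CH^{-1}(A)$ is a $\mu_H$-slope stable reflexive rank-$1$ sheaf with $\ch_1=0$, hence $\CH^{-1}(A)\cong\CO_X$, and $\CH^0(A)$ is torsion supported in dimension $\leq 1$; matching Chern characters then gives $\CH^0(A)=0$, so $A\cong\CO_X[1]$. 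Taking the two possible orderings of the extension yields the two Jordan--Hölder forms in the statement. Finally, computing $\ch_3$ of the semistable object from the two factors gives $\ch_3(G)=\ch_3(I_Z)-\frac16 H^3\leq-\frac16 H^3$ since $\ch_3(I_Z)\leq 0$ for an ideal sheaf of a (possibly empty) subscheme; this establishes the inequality $\ch_3(G)\leq-\frac16 H^3$.

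The main obstacle is the bookkeeping in the first step: one must check that the numerical constraints from $\Delta_H$ and Proposition \ref{Li-bound} genuinely eliminate every candidate subobject except those giving $W(\frac16,\frac16)$, and in particular that no wall lies strictly outside this semicircle. This is the same kind of finite but delicate case-check carried out in Lemmas \ref{auxiliary-lem1-1}--\ref{unique-wall-for-tor-sh}, and the danger is an overlooked $(a,b,c)$ triple; I would organize it by first fixing the (few) admissible values of $b$, then for each one listing the integer solutions of the $\Delta_H$ inequalities, and then ruling them out one at a time. A secondary subtlety is identifying $A$ with $\CO_X[1]$ rather than merely $\CH^{-1}(A)\cong\CO_X$: one must rule out a nonzero torsion part $\CH^0(A)$, which follows because its class would have to be $(0,0,0)$ in degrees $\leq 2$, forcing it to be supported in points, and then a length count against $\ch_3$ together with tilt semistability of $A$ (cf. \cite[Proposition 5.1.3]{BMT14}) forces it to vanish.
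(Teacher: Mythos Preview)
Your approach is essentially identical to the paper's, and the overall strategy is correct. Two small points of comparison: first, you write $\ch(B(-H))=(1,0,0,\ast)$, but in fact $\ch_{\leq 2}(B)=(1,H,\frac{1}{6}H^2)$ gives $\ch_{\leq 2}(B(-H))=(1,0,-\frac{1}{3}H^2)$, so $Z$ is genuinely one-dimensional (a line, possibly with embedded points), and the bound $\ch_3(I_Z)\leq 0$ comes from $\ch_3(\CO_Z)\geq 0$ for such a scheme rather than from $Z$ being ``possibly empty''. Second, for the identification $A\cong\CO_X[1]$ the paper invokes \cite[Proposition 4.20]{BBF+} directly (which already packages the argument that a tilt-semistable object with $\ch_{\leq 2}=(-1,0,0)$ is $\CO_X[1]$), whereas your route through \cite[Proposition 4.9]{BBF+} and a torsion-part analysis is reproving that statement; your version works but is longer than necessary.
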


\begin{proof}
Note that all numerical walls for $\ch_{\leq  2}(G)=(0,H,\frac{1}{6}H^{2})$ intersect with the vertical line $\beta=\frac{1}{6}$. 
Suppose that there is an actual wall induced by the short exact sequence of $\sigma_{\alpha,\frac{1}{6}}$-semistable objects in $\Coh^{\frac{1}{6}}(X)$
  $$
  \xymatrix@=0.5cm{
  0 \ar[r] & A \ar[r] & G \ar[r] & B \ar[r] & 0.
  }
  $$
  We write $\ch_{\leq  2}^{\frac{1}{6}}(A):=(a,\frac{b}{6}H,\frac{c}{72}H^{2})$ with $a,b,c \in \mathbb{Z}$. Since $0<H^{2} \cdot \ch_{1}^{\frac{1}{6}}(A)=\frac{b}{6}H^{3}<H^{2} \cdot \ch_{1}^{\frac{1}{6}}(G)=H^{3}$, we get $b\in \{1,2,3,4,5\}$. By a direct computation, we have the following three possible cases:
  \begin{enumerate}
    \item $b=1$, $a=-1$, $c=-1$, then $\ch_{\leq  2}(A)=(-1,0,0)$ and $\ch_{\leq  2}(B)=(1,H,\frac{1}{6}H^{2})$.
    \item $b=2$, $a=-2$, $c=-2$, then $\ch_{\leq  2}(A)=(-2,0,0)$ and $\ch_{\leq  2}(B)=(2,H,\frac{1}{6}H^{2})$.
    \item $b=3$,  $a=-3$, $c=-3$, then $\ch_{\leq  2}(A)=(-3,0,0)$ and $\ch_{\leq  2}(B)=(3,H,\frac{1}{6}H^{2})$.
  \end{enumerate}
  The last two cases are ruled out by Proposition \ref{Li-bound}.

Suppose now $b=1$, $a=-1$, $c=-1$.  
This determines the unique wall $W(\frac{1}{6},\frac{1}{6})$.
In this case, $\alpha=\frac{1}{6}$, $\ch_{\leq  2}(A)=(-1,0,0)$ and $\ch_{\leq  2}(B)=(1,H,\frac{1}{6}H^{2})$.
Then, by \cite[Proposition 4.20]{BBF+}, we get $A\cong \CO_{X}[1]$.
Since the wall $W(\frac{1}{6},\frac{1}{6})$ tangent to the vertical line $\beta=0$ and $B$ has no walls along $\beta=0$,
by Proposition \ref{limit-tilt-stab-Gie-stab-prop}, $B$ is a $\mu_{H}$-slope stable sheaf.
Twisting $B$ by $\CO_{X}(-H)$, we get 
$
\ch(B(-H))=(1,0,-\frac{1}{3}H^{2},\ch_{3}(G)+\frac{1}{6}H^{3}).
$
Then $B(-H)$ is an ideal sheaf of a curve (may not pure dimension) $Z$.
Thus, the Chern character $\ch(\CO_{Z})=(0,0,\frac{1}{3}H^{2},-\ch_{3}(E)-\frac{1}{6}H^{3})$.
Therefore, we have $-\ch_{3}(E)-\frac{1}{6}H^{3}\geq 0$ and thus $\ch_{3}(G)\leq -\frac{1}{6}H^{3}$.
This completes the proof.
\end{proof}
 
 Now we prove:
 
 \begin{lem}\label{appendix-unique-wall1-prop}
 If $G$ is a $\sigma_{\alpha,\beta}$-semistable object with $\ch_{\leq 2}(G)=(0,H,\frac{1}{6}H^{2})$,
 then $\ch_{3}(G)\leq -\frac{1}{6}H^{3}$.
 If moreover $\ch_{3}(G)=-\frac{1}{6}H^{3}$ and $(\alpha,\beta)$ above the wall $W(\frac{1}{6},\frac{1}{6})$,
 then $G$ fits into a non-trivial extension
 $$
\xymatrix@C=0.5cm{
0 \ar[r] & I_{\ell}(H) \ar[r] & G \ar[r] &  \CO_{X}[1] \ar[r] & 0,}
$$
where $\ell \subset X$ is a line.
 \end{lem}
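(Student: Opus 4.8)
The strategy is to deduce everything from Lemma~\ref{appendix-unique-wall1-lem} together with the fact, proved there, that $W(\tfrac16,\tfrac16)$ is the \emph{only} numerical wall for the truncated invariant $\ch_{\le 2}(G)=(0,H,\tfrac16 H^2)$. First I would use that semistability is an open condition inside a chamber and a closed condition: since $G$ is $\sigma_{\alpha,\beta}$-semistable for \emph{some} $(\alpha,\beta)$ and there is no wall other than $W(\tfrac16,\tfrac16)$, the object $G$ remains $\sigma_{\alpha',\beta'}$-semistable for every $(\alpha',\beta')$ in the closure of its chamber, in particular for every point of $W(\tfrac16,\tfrac16)$. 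So fix $(\alpha_0,\beta_0)\in W(\tfrac16,\tfrac16)$ and assume $G$ is $\sigma_{\alpha_0,\beta_0}$-semistable.

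For the first assertion I would split into two cases. If $G$ is \emph{strictly} $\sigma_{\alpha_0,\beta_0}$-semistable, Lemma~\ref{appendix-unique-wall1-lem} applies verbatim and gives $\ch_3(G)\le -\tfrac16 H^3$. If instead $G$ is $\sigma_{\alpha_0,\beta_0}$-stable, then $W(\tfrac16,\tfrac16)$ is not an actual wall for $G$, so $G$ is $\sigma_{\alpha,\beta}$-stable for \emph{all} $(\alpha,\beta)$; hence $G\in\bigcap_{\beta}\Coh^{\beta}(X)=\Coh(X)$, and since $\ch_0(G)=0$ it is a torsion sheaf, which by the large volume limit (Proposition~\ref{limit-tilt-stab-Gie-stab-prop}, applied for $\alpha\gg 0$ to the torsion object $G$) is $2$-$H$-Gieseker semistable, hence pure of dimension two, supported on a cubic surface $Y\in|H|$ and generically of rank one there. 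Then $G$ embeds into its reflexive hull on $Y$ with zero-dimensional cokernel, and a Grothendieck--Riemann--Roch computation for $Y\hookrightarrow X$ (the relevant divisor class on $Y$ being a degree-two, i.e. conic, class) gives $\ch_3$ of the hull equal to $-\tfrac16 H^3$; therefore $\ch_3(G)\le -\tfrac16 H^3$, with equality only if $G$ itself is that line bundle $\CO_Y(C)$. But $H|_Y-C$ is the residual line class, hence effective, so $\Ext^1(\CO_Y(C),\CO_X)\cong H^0(Y,\CO_Y(H-C))\neq 0$; the resulting nonzero morphism $G\to\CO_X[1]$ is an epimorphism in $\Coh^{\beta}$ onto the tilt-stable object $\CO_X[1]$, whose slope equals that of $G$ along $W(\tfrac16,\tfrac16)$, contradicting stability. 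So in this alternative one in fact has $\ch_3(G)<-\tfrac16 H^3$.

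For the \emph{moreover} part, suppose $\ch_3(G)=-\tfrac16 H^3$ and $(\alpha,\beta)$ lies above $W(\tfrac16,\tfrac16)$. By the previous paragraph $G$ cannot be stable everywhere, so it is strictly $\sigma_{\alpha_0,\beta_0}$-semistable and Lemma~\ref{appendix-unique-wall1-lem} produces a short exact sequence which is, up to the order of the two factors, $0\to I_Z(H)\to G\to\CO_X[1]\to 0$ with $\ch(I_Z)=(1,0,-\tfrac13 H^2,0)$. Since $\ch_3(I_Z)=0$, the subscheme $Z$ has the Chern character --- hence the Hilbert polynomial --- of a line, so $Z=\ell$ is a line and $I_Z(H)=I_\ell(H)$. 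A slope comparison just above $(\alpha_0,\beta_0)$ shows $\mu_{\alpha,\beta}(I_\ell(H))<\mu_{\alpha,\beta}(\CO_X[1])$ there, so semistability of $G$ above the wall forces the factor of smaller slope to be the subobject, i.e. the sequence $0\to I_\ell(H)\to G\to\CO_X[1]\to 0$. Finally, were this extension split, $\CO_X[1]$ would be a direct summand, hence a subobject, of $G$ of strictly larger slope above the wall, contradicting semistability; so the extension is non-trivial.

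The main obstacle is the ``$G$ stable everywhere'' alternative in the second paragraph: it forces one to analyse rank-one pure two-dimensional Gieseker semistable sheaves on a hyperplane section $Y\in|H|$ (keeping track of possibly singular or reducible $Y$) and to run the Grothendieck--Riemann--Roch bookkeeping, together with the residual-line argument ruling out equality. Everything else is a routine wall computation once Lemma~\ref{appendix-unique-wall1-lem} is in hand; alternatively, this alternative can be short-circuited by invoking the Castelnuovo-type bound of \cite{Li19} for tilt-semistable objects directly.
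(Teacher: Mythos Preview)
Your overall strategy is sound, but it diverges from the paper's and carries an unnecessary burden. The paper does \emph{not} split into the cases ``strictly semistable on the wall'' versus ``stable everywhere''. Instead it argues uniformly: assuming $\ch_3(G)\ge -\tfrac16 H^3$, it observes that the numerical wall $W(G,\CO_X(2H))=W(\tfrac16,\tfrac{11}{6})$ lies strictly above $W(\tfrac16,\tfrac16)$, so $G$ and $\CO_X(2H)$ are both tilt-stable of the same phase there, giving $\Hom(\CO_X(2H),G)=0$; by Serre duality this is $\Hom(G,\CO_X[3])=0$, and then Hirzebruch--Riemann--Roch yields
\[
\hom(G,\CO_X[1])\ \ge\ -\chi(G,\CO_X)\ =\ \ch_3(G)+\tfrac12 H^3\ \ge\ 1.
\]
The resulting nonzero map $G\to\CO_X[1]$ forces $W(\tfrac16,\tfrac16)$ to be an \emph{actual} wall for $G$, so Lemma~\ref{appendix-unique-wall1-lem} applies and gives both $\ch_3(G)\le -\tfrac16 H^3$ (hence equality) and, after identifying $Z$ as a line, the short exact sequence with $I_\ell(H)$ as subobject.

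Compared with this, your ``stable everywhere'' branch is the hard way round: you are forced to show $G$ is a pure two-dimensional sheaf, push it to a reflexive hull on a possibly singular or reducible hyperplane section $Y$, and then run a residual-line argument. You correctly flag the singular-$Y$ case as the obstacle, and your suggested shortcut via Li's $\ch_3$-inequality would indeed close it---but note that the paper's $\Hom(G,\CO_X[1])\neq 0$ computation \emph{already} disposes of your stable-everywhere case outright, since that map destabilizes $G$ along the wall. In other words, the paper's argument is precisely the ``short-circuit'' you were looking for, and it requires nothing beyond Serre duality and Riemann--Roch.

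Your treatment of the ``moreover'' part (identifying $Z$ as a line, determining the order of the JH factors by comparing slopes just above the wall, and excluding the split case) is correct and essentially matches the paper's conclusion, though the paper reaches the extension more directly from the surjection $G\twoheadrightarrow\CO_X[1]$ rather than by listing both JH orderings and eliminating one.
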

 
 \begin{proof}
Suppose $\ch_{3}(G)\geq -\frac{1}{6}H^{3}$.
It follows from Lemma \ref{appendix-unique-wall1-lem}
that the only possible wall for $G$ is $W(\frac{1}{6},\frac{1}{6})$.
This means that $G$ is tilt semistable along $W(\frac{1}{6},\frac{1}{6})$.
Note that the numerical wall $W(G,\CO_{X}(2H))=W(\frac{1}{6},\frac{11}{6})$ lies above $W(\frac{1}{6},\frac{1}{6})$.
Then both $\CO_{X}(2H)$ and $G$ are tilt stable along $W(G,\CO_{X}(2H))$.
Thus, by Serre duality, we get 
$$
\Hom(G,\CO_{X}[3])\cong \Hom(\CO_{X}(2H),G)=0.
$$
Then, by Hirzebruch--Riemann--Roch theorem, we have 
$$
\Hom(G,\CO_{X}[1])\geq -\chi(G,\CO_{X})=\ch_{3}(G)+\frac{1}{2}H^{3}\geq 1.
$$
By Lemma \ref{appendix-unique-wall1-lem}, $W(\frac{1}{6}, 
\frac{1}{6})$ is an actual wall for $G$ and thus $\ch_{3}(G)=-\frac{1}{6}H^{3}$.
In this case, the curve $Z$ in Lemma \ref{appendix-unique-wall1-lem} is indeed a line $\ell\subset X$.
 \end{proof}

Finally, we obtain the following classification of torsion sheaves.

 \begin{thm}\label{Classification-torison-sh}
 The semicircle $W(\frac{5}{6},\frac{1}{6})$ is the unique actual wall in tilt stability for objects $G$ with Chern character $\ch(G)=(0,H,\frac{5}{6}H^{2},-\frac{1}{6}H^{3})$.
\begin{enumerate}
\item[(1)] Above $W(\frac{5}{6},\frac{1}{6})$, we have exactly the following three types of objects:
  \begin{enumerate}
  \item[(i)] Except for the object in Remark \ref{rem-for-class-torsion-sh-thm},
                $G$ fits into a non-trivial extension
                $
                 \xymatrix@C=0.3cm{
                 0 \ar[r]^{} & I_{p}(H) \ar[r]^{} & G \ar[r]^{} & A \ar[r]^{} & 0,}
                 $
                 where $A$ as in Lemma \ref{ssA-char-lem} (i), 
                 $\ell$ is a line and $p$ is a point.               
 \item[(ii)] $G$ fits into a non-trivial extension
                $
                 \xymatrix@C=0.3cm{
                 0 \ar[r]^{} & \CO_{X}(H) \ar[r]^{} & G \ar[r]^{} & A \ar[r]^{} & 0,}
                 $
                 where $A$ as in Lemma \ref{ssA-char-lem} (ii) and $p\in \ell$.         
   \item[(iii)] $G=\CO_{Y}(D)$, where $D$ is a Weil divisor on $Y\in |H|$.
\end{enumerate}
  \item[(2)] Below $W(\frac{5}{6},\frac{1}{6})$,  we have exactly the following three types of objects:
  \begin{enumerate}
    \item[(i)] $G$ fits into a non-trivial extension
                $
                 \xymatrix@C=0.3cm{
                 0 \ar[r]^{} & A \ar[r]^{} & G \ar[r]^{} & I_{p}(H) \ar[r]^{} & 0,}
                 $
                 where $A$ as in Lemma \ref{ssA-char-lem} (i), 
                 $\ell$ is a line and $p$ is a point. 
 \item[(ii)] Except for the object in Remark \ref{rem-for-class-torsion-sh-thm},
                $G$ fits into a non-trivial extension
                $
                 \xymatrix@C=0.3cm{
                 0 \ar[r]^{} & A \ar[r]^{} & G \ar[r]^{} & \CO_{X}(H) \ar[r]^{} & 0,}
                 $
                 where $A$ as in Lemma \ref{ssA-char-lem} (ii) and $p\in \ell$.
\item[(iii)] $G=\CO_{Y}(D)$, where $D$ is a Weil divisor on $Y\in |H|$.
\end{enumerate}
\end{enumerate}
Here in both cases $(i)$ and $(ii)$, $\CH^{-1}(G)=\CO_{X}$ and $\CH^{-1}(\DD(G))=\CO_{X}(-H)$.
\end{thm}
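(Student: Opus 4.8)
The plan is to assemble the theorem from three ingredients already in place: the wall analysis of Lemma~\ref{unique-wall-for-tor-sh}, the stable case of Proposition~\ref{torsion-sh-stable-case}, and a comparison of tilt slopes across $W(\frac{5}{6},\frac{1}{6})$. First I would record that, by Lemma~\ref{unique-wall-for-tor-sh}, the only numerical wall for the truncated Chern character $(0,H,\frac{5}{6}H^{2})$ is the semicircle $W(\frac{5}{6},\frac{1}{6})$; hence the upper half-plane splits into exactly two chambers, ``above'' and ``below'' this semicircle. Consequently any $G$ with $\ch(G)=(0,H,\frac{5}{6}H^{2},-\frac{1}{6}H^{3})$ that is $\sigma_{\alpha,\beta}$-semistable for one point of a chamber is $\sigma_{\alpha,\beta}$-semistable throughout that open chamber and, semistability being a closed condition, along $W(\frac{5}{6},\frac{1}{6})$ as well. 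That the wall is \emph{actual} follows from the existence of the non-split extensions appearing in the classification below, whose two building blocks are tilt semistable of the same slope along $W(\frac{5}{6},\frac{1}{6})$.

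Next, fix a chamber and let $G$ be as above. If $G$ is $\sigma_{\alpha,\beta}$-stable for \emph{all} $(\alpha,\beta)$, then Proposition~\ref{torsion-sh-stable-case} gives $G\cong\CO_{Y}(D)$ for a Weil divisor $D$ on some $Y\in|H|$; conversely such a $\CO_{Y}(D)$ cannot lie on $W(\frac{5}{6},\frac{1}{6})$, for being strictly semistable there would force $\CH^{-1}(G)=\CO_{X}\neq 0$ by Lemma~\ref{unique-wall-for-tor-sh}, impossible for a sheaf, so $\CO_{Y}(D)$ is stable in both chambers --- this is case (iii) of both (1) and (2). Otherwise $G$ is strictly $\sigma_{\alpha,\beta}$-semistable exactly along $W(\frac{5}{6},\frac{1}{6})$, and Lemma~\ref{unique-wall-for-tor-sh} identifies its Jordan--H\"older factors as either $I_{p}(H)$ and $A$ with $A$ as in Lemma~\ref{ssA-char-lem}(i), or $\CO_{X}(H)$ and $A$ with $A$ as in Lemma~\ref{ssA-char-lem}(ii) and $p\in\ell$. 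Since these factors are stable, $G$ is a non-split extension of the two in one of the two possible orders, and the cohomology-sheaf computations in the four cases of Lemma~\ref{unique-wall-for-tor-sh} give $\CH^{-1}(G)=\CO_{X}$ and $\CH^{-1}(\DD(G))=\CO_{X}(-H)$.

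To decide which factor is the subobject in each chamber I would compare $\mu_{\alpha,\beta}(I_{p}(H))$, $\mu_{\alpha,\beta}(\CO_{X}(H))$ and $\mu_{\alpha,\beta}(A)$ at points just off $W(\frac{5}{6},\frac{1}{6})$: a direct computation with $Z_{\alpha,\beta}$ shows $\mu_{\alpha,\beta}(I_{p}(H))<\mu_{\alpha,\beta}(A)$ and $\mu_{\alpha,\beta}(\CO_{X}(H))<\mu_{\alpha,\beta}(A)$ just above the wall, with both inequalities reversed just below. Hence above the wall semistability of $G$ forces $I_{p}(H)$, resp.\ $\CO_{X}(H)$, to be the subobject and $A$ the quotient, which is case (1)(i), resp.\ (1)(ii); below the wall the roles are interchanged, yielding (2)(i) and (2)(ii). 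For the converse one checks that each such non-split extension is in fact $\sigma_{\alpha,\beta}$-semistable throughout its chamber: it is semistable on $W(\frac{5}{6},\frac{1}{6})$ as an extension of equal-slope semistable objects, and the slope inequalities together with the fact that $W(\frac{5}{6},\frac{1}{6})$ is the only wall keep it semistable inside the chamber, with the one degenerate exception recorded in Remark~\ref{rem-for-class-torsion-sh-thm}, which has to be excluded.

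The hard part will be exactly this converse: excluding a destabilizing subobject that is not visible along the wall, and isolating the exceptional object of Remark~\ref{rem-for-class-torsion-sh-thm}. This requires controlling the $\Ext^{1}$-groups that govern which extensions occur and which are non-split --- in particular the incidence condition $p\in\ell$ in the second family and the degeneration underlying the exceptional case --- whereas the slope comparisons near the wall and the cohomology-sheaf bookkeeping, already carried out in Lemma~\ref{unique-wall-for-tor-sh}, are routine.
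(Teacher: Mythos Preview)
Your plan is correct and follows the paper's approach: Lemma~\ref{unique-wall-for-tor-sh} pins down the unique wall and the possible Jordan--H\"older data, Proposition~\ref{torsion-sh-stable-case} handles the everywhere-stable case $G=\CO_Y(D)$, and a slope comparison across $W(\tfrac{5}{6},\tfrac{1}{6})$ orients the four extension types into the two chambers (the paper leaves this comparison implicit, writing ``Clearly $G$ is tilt-unstable below/above''). The converse you flag as the hard part is exactly where the paper's work lies, and your description of it is slightly off in one respect: any destabilizing subobject in the ``good'' chamber \emph{is} visible on the wall, since its numerical wall must again be $W(\tfrac{5}{6},\tfrac{1}{6})$; the danger is rather that $G$ might admit a second filtration on the wall with the \emph{other} pair of invariants from Lemma~\ref{unique-wall-for-tor-sh} (e.g.\ an $(I_p(H),A_{\mathrm{(i)}})$-extension that is simultaneously an $(A_{\mathrm{(ii)}},\CO_X(H))$-extension). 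The paper rules this out case by case via cohomology-sheaf computations (forcing splittings or impossible maps such as $\CO_\ell\to\CO_X(H)$), and the one configuration where both filtrations genuinely coexist is precisely the exceptional object of Remark~\ref{rem-for-class-torsion-sh-thm}, which is therefore unstable on both sides.
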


 \begin{proof}
 By Proposition \ref{torsion-sh-stable-case}, 
 we may assume that $G$ is strictly tilt semistable along $W(\frac{5}{6},\frac{1}{6})$.
 By Lemma \ref{unique-wall-for-tor-sh}, we only need to discuss four cases:

{\bf Case 1}. 
We suppose that $G$ fits into a non-trivial extension 
\begin{equation}\label{torsion-sh-class-eq1}
\xymatrix@C=0.5cm{
0 \ar[r]^{} & \CO_{X}(H) \ar[r]^{} & G \ar[r]^{} & A \ar[r]^{} & 0,}
\end{equation}
where $A$ fits into a non-trivial extension
$\xymatrix@C=0.5cm{
0 \ar[r]^{} & \CO_{X}[1] \ar[r]^{} & A \ar[r]^{} & \CO_{\ell}(-H) \ar[r]^{} & 0.}
$
Clearly, $G$ is tilt-unstable below $W(\frac{5}{6},\frac{1}{6})$.
Suppose that $G$ is also tilt-unstable above $W(\frac{5}{6},\frac{1}{6})$.
By Lemma \ref{unique-wall-for-tor-sh}, 
there exists a destabilizing exact sequence 
\begin{equation}\label{torsion-sh-class-eq1-1}
\xymatrix@C=0.5cm{
0 \ar[r]^{} & F \ar[r]^{} & G \ar[r]^{} & Q \ar[r]^{} & 0,
}
\end{equation}
where there are two possible cases: 
\begin{enumerate}
\item[(i)] $Q\cong \CO_{X}(H)$ and
$
\xymatrix@C=0.3cm{
0 \ar[r]^{} & \CO_{X}[1] \ar[r]^{} & F \ar[r]^{} & \CO_{\ell}(-H) \ar[r]^{} & 0
}
$ is a non-trivial extension;
\item[(ii)] $Q\cong I_{p}(H)$ and
$
\xymatrix@C=0.3cm{
0 \ar[r]^{} & \CO_{X}[1] \ar[r]^{} & F \ar[r]^{} & \CO_{\ell} \ar[r]^{} & 0
}
$  is the unique non-trivial extension.
\end{enumerate}
In case (i), since $\Hom(\CO_{X}(H),\CO_{X}(H))=\CN$ and $\Hom(\CO_{X}(H),F)=0$, 
so \eqref{torsion-sh-class-eq1} splits, a contradiction.
In case (ii),
taking exact sequences of cohomology sheaves for \eqref{torsion-sh-class-eq1} and \eqref{torsion-sh-class-eq1-1},
we get two short exact sequences
$$
\xymatrix@C=0.4cm{
0 \ar[r]^{} & \CO_{X}(H) \ar[r]^{} & \CH^{0}(G)  \ar[r]^{} & \CO_{\ell}(-H)  \ar[r]^{} & 0}
\,
\textrm{ and }
\,
\xymatrix@C=0.4cm{
0 \ar[r]^{} & \CO_{\ell} \ar[r]^{} & \CH^{0}(G)  \ar[r]^{} & I_{p}(H)  \ar[r]^{} & 0}
$$
Note that $\Hom(\CO_{\ell},\CO_{\ell}(-H))=0$. 
Thus we get a non-trivial morphism $\CO_{\ell}\rightarrow \CO_{X}(H)$, a contradiction.

{\bf Case 2}. 
We suppose that $G$ fits into a non-trivial extension 
\begin{equation}\label{torsion-sh-class-eq2}
\xymatrix@C=0.5cm{
0 \ar[r]^{} &  A \ar[r]^{} & G \ar[r]^{} & \CO_{X}(H) \ar[r]^{} & 0,}
\end{equation}
where $A$ fits into a non-trivial extension
$
\xymatrix@C=0.3cm{
0 \ar[r]^{} & \CO_{X}[1] \ar[r]^{} & A \ar[r]^{} & \CO_{\ell}(-H) \ar[r]^{} & 0.}
$
Then we have $\CH^{-1}(G)=\CO_{X}$ and $\Ext^{1}(\CO_{X}(H),A)\cong \Ext^{1}(\CO_{X}(H),\CO_{\ell}(-H))=\CN$.
Clearly, $G$ is tilt-unstable above $W(\frac{5}{6},\frac{1}{6})$.
Assume that $G$ is also tilt-unstable below $W(\frac{5}{6},\frac{1}{6})$.
By Lemma \ref{unique-wall-for-tor-sh}, 
there exists a destabilizing exact sequence 
$
\xymatrix@C=0.3cm{
0 \ar[r]^{} & Q \ar[r]^{} & G \ar[r]^{} & F \ar[r]^{} & 0,
}
$
where there are two cases:
\begin{enumerate}
\item[(i)] $Q\cong \CO_{X}(H)$ and
$
\xymatrix@C=0.3cm{
0 \ar[r]^{} & \CO_{X}[1] \ar[r]^{} & F \ar[r]^{} & \CO_{\ell}(-H) \ar[r]^{} & 0
}
$ is a non-trivial extension;
\item[(ii)] $Q\cong I_{p}(H)$ and
$
\xymatrix@C=0.3cm{
0 \ar[r]^{} & \CO_{X}[1] \ar[r]^{} & F \ar[r]^{} & \CO_{\ell} \ar[r]^{} & 0
}
$  is the unique non-trivial extension.
\end{enumerate}
In case (i), \eqref{torsion-sh-class-eq2} splits, a contradiction.
In case (ii),
taking the exact sequences of cohomology sheaves,
we have two short exact sequences
\begin{equation}\label{torsion-sh-class-eq2-1}
\xymatrix@C=0.5cm{
0 \ar[r]^{} & I_{p}(H) \ar[r]^{} & \CH^{0}(G)  \ar[r]^{} & \CO_{\ell}  \ar[r]^{} & 0
}
\end{equation}
and
\begin{equation}\label{torsion-sh-class-eq2-2}
\xymatrix@C=0.5cm{
0 \ar[r]^{} & \CO_{\ell}(-H) \ar[r]^{} & \CH^{0}(G)  \ar[r]^{} & \CO_{X}(H)  \ar[r]^{} & 0.
}
\end{equation}
Therefore, by Remark \ref{rem-for-class-torsion-sh-thm}, we only need to discuss three cases:
\begin{enumerate}
  \item[(a)] If $p\notin \ell$, then $\CH^{0}(G)\cong I_{p}(H)\oplus \CO_{\ell}$.
  Then we have $\CO_{X}(H)\cong \CO_{q}\oplus I_{p}(H)$ for some point $q\in \ell$, a contradiction.
  \item[(b)] If $p\in \ell$ and $\CH^{0}(G)$ is the trivial extension in \eqref{torsion-sh-class-eq2-1}, 
  then we have $\CO_{X}(H)\cong \CO_{p}\oplus I_{p}(H)$, a contradiction.
  \item[(c)] Suppose $p\in \ell$ and $\CH^{0}(G)$ is the non-trivial extension in \eqref{torsion-sh-class-eq2-1}. If $\CH^{0}(G)$ is the trivial extension in \eqref{torsion-sh-class-eq2-2}, then $\CH^{0}(G)\cong \CO_{\ell}(-H) \oplus \CO_{X}(H)$. 
  Using \eqref{torsion-sh-class-eq2-1}, we get $\CO_{p}\oplus \CO_{\ell}(-H)\cong \CO_{\ell}$, a contradiction. 
\end{enumerate}

{\bf Case 3}.  
We suppose that $G$ fits into a non-trivial extension 
\begin{equation}\label{torsion-sh-class-eq3}
\xymatrix@C=0.5cm{
0 \ar[r]^{} & I_{p}(H) \ar[r]^{} & G \ar[r]^{} & A \ar[r]^{} & 0,}
\end{equation}
where $A$ fits into a non-trivial extension
$
\xymatrix@C=0.35cm{
0 \ar[r]^{} & \CO_{X}[1] \ar[r]^{} & A \ar[r]^{} & \CO_{\ell} \ar[r]^{} & 0.
}
$
Clearly, $G$ is tilt-unstable below $W(\frac{5}{6},\frac{1}{6})$.
Suppose that $G$ is also tilt-unstable above $W(\frac{5}{6},\frac{1}{6})$.
By Lemma \ref{unique-wall-for-tor-sh}, 
there exists a destabilizing exact sequence 
$
\xymatrix@C=0.3cm{
0 \ar[r]^{} & F \ar[r]^{} & G \ar[r]^{} & Q \ar[r]^{} & 0,
}
$
where there are two cases:
\begin{enumerate}
\item[(i)] $Q\cong I_{p}(H)$ and
$
\xymatrix@C=0.3cm{
0 \ar[r]^{} & \CO_{X}[1] \ar[r]^{} & F \ar[r]^{} & \CO_{\ell} \ar[r]^{} & 0
}
$ is the unique non-trivial extension;
\item[(ii)] $Q\cong \CO_{X}(H)$ and
$
\xymatrix@C=0.3cm{
0 \ar[r]^{} & \CO_{X}[1] \ar[r]^{} & F \ar[r]^{} & \CO_{\ell}(-H) \ar[r]^{} & 0
}
$  is a non-trivial extension.
\end{enumerate}
In case (i), 
since $\Hom(I_{p}(H),I_{p}(H))=\CN$ and $\Hom(A,I_{p}(H))=0$, 
so \eqref{torsion-sh-class-eq3} splits, a contradiction.
In case (ii),  the situation is reduced to the case (ii) in {\bf Case 2}.

{\bf Case 4}.  
We suppose that $G$ fits into a non-trivial extension 
\begin{equation}\label{torsion-sh-class-eq4}
\xymatrix@C=0.5cm{
0 \ar[r]^{} & A \ar[r]^{} & G \ar[r]^{} & I_{p}(H) \ar[r]^{} & 0,}
\end{equation}
where $A$ fits into a non-trivial extension
$
\xymatrix@C=0.35cm{
0 \ar[r]^{} & \CO_{X}[1] \ar[r]^{} & A \ar[r]^{} & \CO_{\ell} \ar[r]^{} & 0.
}
$
Then we have $\CH^{-1}(G)=\CO_{X}$.
Clearly, $G$ is tilt-unstable above $W(\frac{5}{6},\frac{1}{6})$.
Suppose that $G$ is also tilt-unstable below $W(\frac{5}{6},\frac{1}{6})$.
By Lemma \ref{unique-wall-for-tor-sh},  
there exists a destabilizing exact sequence 
$
\xymatrix@C=0.3cm{
0 \ar[r]^{} & Q \ar[r]^{} & G \ar[r]^{} & F \ar[r]^{} & 0,
}
$
where there are two cases:
\begin{enumerate}
\item[(i)] $Q\cong I_{p}(H)$ and
$
\xymatrix@C=0.3cm{
0 \ar[r]^{} & \CO_{X}[1] \ar[r]^{} & F \ar[r]^{} & \CO_{\ell} \ar[r]^{} & 0
}
$ is the unique non-trivial extension;
\item[(ii)] $Q\cong \CO_{X}(H)$ and
$
\xymatrix@C=0.3cm{
0 \ar[r]^{} & \CO_{X}[1] \ar[r]^{} & F \ar[r]^{} & \CO_{\ell}(-H) \ar[r]^{} & 0
}
$  is a non-trivial extension.
\end{enumerate}
In case (i), \eqref{torsion-sh-class-eq3} splits, a contradiction.
In case (ii), the situation is reduced to the case (ii) in {\bf Case 1}.
 \end{proof}

\begin{rem}\label{rem-for-class-torsion-sh-thm}
Let $p\in \ell$.
Using the short exact sequence
\begin{equation}\label{Ip-Op-SES}
\xymatrix@C=0.5cm{
0 \ar[r]^{} & I_{p}(H) \ar[r]^{} & \CO_{X}(H) \ar[r]^{} & \CO_{p} \ar[r]^{} & 0,
}
\end{equation}
we get $\Ext^{1}(\CO_{\ell},I_{p}(H))\cong \Hom(\CO_{\ell},\CO_{p})=\CN$.
Note that by Serre duality, we have 
$$
\Ext^{1}(\CO_{X}(H),\CO_{\ell}(-H))\cong H^{1}(\PB^{1},\CO(-2))=\CN.
$$
For the unique non-trivial extensions 
$$
\xymatrix@C=0.5cm{
0 \ar[r]^{} & I_{p}(H) \ar[r]^{} & G_{1}  \ar[r]^{} & \CO_{\ell}  \ar[r]^{} & 0
}
\textrm{ and }
\xymatrix@C=0.5cm{
0 \ar[r]^{} & \CO_{\ell}(-H) \ar[r]^{} & G_{2}  \ar[r]^{} & \CO_{X}(H)  \ar[r]^{} & 0,
}
$$
by \eqref{Ip-Op-SES} and the unique non-trivial extension 
$$
\xymatrix@C=0.5cm{
0 \ar[r]^{} & \CO_{\ell}(-H) \ar[r]^{} & \CO_{\ell} \ar[r]^{} & \CO_{p} \ar[r]^{} & 0,
}
$$
we get $G_{1}\cong G_{2}$.
Note that $\Ext^{1}(G_{1},\CO_{X}[1])=\CN^{2}$.
Consider the object $G$ defined by a non-trivial extension 
$
\xymatrix@C=0.3cm{
0 \ar[r]^{} & \CO_{X}[1] \ar[r]^{} & G \ar[r]^{} & G_{1} \ar[r]^{} & 0
}
$
with $\CH^{-1}(\DD(G))=\CO_{X}(-H)$, $\CH^{0}(\DD(G))=I_{\ell}$ and $\CH^{1}(\DD(G))=\CO_{p}$.
\end{rem}

As a consequence, we have the following:

\begin{cor}\label{tor-sh-char-cor}
Suppose that $D$ is a Weil divisor on a hyperplane section $Y\in |H|$ with Chern character $\ch(\CO_{Y}(D)) = (0,H,\frac{5}{6} H^{2},-\frac{1}{6} H^{3})$. 
Then $h^{0}(Y,\CO_{Y}(D))\geq  4$.
\end{cor}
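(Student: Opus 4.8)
The plan is to compute the Euler characteristic of $\CO_{Y}(D)$ by Hirzebruch--Riemann--Roch and then to prove that its second cohomology on the surface $Y$ vanishes. As $\CO_{Y}(D)$ is supported on the two-dimensional scheme $Y$, one has $H^{i}(Y,\CO_{Y}(D))=0$ for $i\geq 3$, so it is enough to show $\chi(Y,\CO_{Y}(D))=4$ and $H^{2}(Y,\CO_{Y}(D))=0$; then $h^{0}(Y,\CO_{Y}(D))=4+h^{1}(Y,\CO_{Y}(D))\geq 4$.

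For the Euler characteristic I would apply Riemann--Roch on the smooth threefold $X$ to the sheaf $i_{*}\CO_{Y}(D)$, where $i\colon Y\hookrightarrow X$, using $H^{i}(X,i_{*}\CO_{Y}(D))\cong H^{i}(Y,\CO_{Y}(D))$. With $\ch(\CO_{Y}(D))=(0,H,\tfrac{5}{6}H^{2},-\tfrac{1}{6}H^{3})$, $H^{3}=3$, and $\td(X)=1+H+\tfrac{2}{3}H^{2}+\tfrac{1}{3}H^{3}$ (from $c_{1}(X)=2H$, $c_{2}(X)=4H^{2}$), the degree-three part of $\ch(\CO_{Y}(D))\cdot\td(X)$ is $\tfrac{4}{3}H^{3}$, so $\chi(Y,\CO_{Y}(D))=4$. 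This is a routine calculation.

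The substantive step is the vanishing $H^{2}(Y,\CO_{Y}(D))=0$. The cubic hypersurface $Y\subset\PB^{3}$ is Gorenstein with dualizing sheaf $\omega_{Y}=\CO_{Y}(-H|_{Y})$, and it is integral, since a smooth cubic threefold contains no plane and therefore has no reducible or non-reduced hyperplane section. By Grothendieck--Serre duality on $Y$, $H^{2}(Y,\CO_{Y}(D))^{\vee}\cong\Hom_{Y}\!\big(\CO_{Y}(D),\CO_{Y}(-H|_{Y})\big)$, and any nonzero morphism between the rank-one torsion-free sheaves $\CO_{Y}(D)$ and $\CO_{Y}(-H|_{Y})$ on the integral surface $Y$ is injective with torsion (hence effective) cokernel. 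Pushing forward to $X$, the existence of such a morphism would force $\ch_{2}(i_{*}\CO_{Y}(D))\cdot H\leq\ch_{2}(i_{*}\CO_{Y}(-H|_{Y}))\cdot H$; but $\ch_{2}(i_{*}\CO_{Y}(D))\cdot H=\tfrac{5}{2}$, while $\ch_{2}(i_{*}\CO_{Y}(-H|_{Y}))\cdot H=-\tfrac{9}{2}$ because $\ch(i_{*}\CO_{Y}(-H|_{Y}))=\ch(i_{*}\CO_{Y})\cdot e^{-H}$ has $\ch_{2}=-\tfrac{3}{2}H^{2}$. This contradiction gives $\Hom_{Y}(\CO_{Y}(D),\CO_{Y}(-H|_{Y}))=0$, hence $H^{2}(Y,\CO_{Y}(D))=0$, and the corollary follows.

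The main obstacle is carrying out this vanishing when the hyperplane section $Y$ is singular --- possibly even non-normal, although it is always integral --- so that one cannot appeal to Kodaira vanishing or to the geometry of a del Pezzo surface; the argument above uses only that $Y$ is integral and Gorenstein of codimension one in the smooth ambient $X$. Equivalently, one can stay on $X$: since $\CO_{Y}(D)$ has projective dimension one over $\CO_{X}$, Serre duality together with the local-to-global spectral sequence identifies $H^{2}(X,i_{*}\CO_{Y}(D))$ with $H^{0}$ of the reflexive rank-one sheaf $\CExt^{1}_{X}(i_{*}\CO_{Y}(D),\CO_{X}(-2H))$, whose Chern character $(0,H,-\tfrac{17}{6}H^{2},\tfrac{7}{2}H^{3})$ has negative $H$-degree relative to $\CO_{Y}$, forcing that $H^{0}$ to vanish.
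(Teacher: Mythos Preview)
Your proof is correct and follows a genuinely different route from the paper's. Both arguments reduce to showing $H^{2}(Y,\CO_{Y}(D))=0$ and then invoking Hirzebruch--Riemann--Roch to get $\chi=4$. The paper obtains the $H^{2}$-vanishing via tilt stability: it quotes Theorem \ref{Classification-torison-sh} to conclude that $\CO_{Y}(D)$ is $\sigma_{\alpha,\beta}$-stable for every $(\alpha,\beta)$, observes that the numerical wall $W(\CO_{Y}(D),\CO_{X}(-2H)[1])=W(\tfrac{5}{6},\tfrac{17}{6})$ lies above $W(\tfrac{5}{6},\tfrac{1}{6})$, and then uses Serre duality on $X$ together with stability along that wall to get $\Hom(\CO_{Y}(D),\CO_{X}(-2H)[1])=0$. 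You instead apply Serre duality directly on the Gorenstein surface $Y$, reducing to $\Hom_{Y}(\CO_{Y}(D),\CO_{Y}(-H))=0$, and rule out a nonzero morphism by the elementary observation that a map of rank-one torsion-free sheaves on the integral scheme $Y$ is injective, forcing an impossible inequality of $H\cdot\ch_{2}$ on $X$. Your argument is more self-contained: it needs only that $Y$ is an integral Gorenstein hypersurface (which you justify by noting a smooth cubic threefold contains no plane) and basic Chern-character bookkeeping, whereas the paper's argument leans on the full classification of tilt-semistable objects with this Chern character developed earlier in the section. The paper's approach has the virtue of fitting seamlessly into the wall-crossing framework already in place; yours has the virtue of working without any appeal to tilt stability at all.
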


\begin{proof}
By Theorem \ref{Classification-torison-sh}, 
the sheaf $\CO_{Y}(D)$ is $\sigma_{\alpha,\beta}$-stable for all $\alpha >0$ and $\beta \in \RN$.  
By a direct computation, 
the numerical wall $W(\CO_{Y}(D),\CO_{X}(-2H)[1])=W(\frac{5}{6},\frac{17}{6})$ is non-empty and above the wall $W(\frac{5}{6},\frac{1}{6})$. 
Hence, by Serre duality, we get 
$$
H^{2}(Y,\CO_{Y}(D))\cong \Hom(\CO_{Y}(D),\CO_{X}(-2H)[1])=0.
$$ 
By the Hirzebruch--Riemann--Roch theorem,
we have
$$
h^0(Y,\CO_{Y}(D)) \geq \chi(\CO_{Y}(D)) = 4.
$$
This competes the proof.
\end{proof}


\section{Proof of Theorem \ref{sh-charact-thm}}
\label{Proof-main-2result}

The goal of this section is to use wall-crossing techniques prove Theorem \ref{sh-charact-thm}, 
which gives a set-theoretic description of the Gieseker moduli space $\overline{M}_{X}(\nu)$.
Here the Chern character $\nu=2\ch(I_{\ell})+\ch(S(I_{\ell}))=(4,-H,-\frac{5}{6}H^{2},\frac{1}{6}H^{3})$.


\subsection{Non-reflexive stable sheaves}
It is worth noticing that every $\mu_{H}$-slope stable sheaf with Chern character $\ch(I_{\ell})+\ch(S(I_{\ell}))$ is reflexive (see \cite[Proposition 6.6]{BBF+}). 
However, in our case, we will show that there exists non-reflexive $\mu_{H}$-slope stable sheaves with Chern character $\nu=2\ch(I_{\ell})+\ch(S(I_{\ell}))$.
Let us first recall the $\mu_{H}$-slope stable vector bundle $\mathcal{U}_{X}$ in Remark \ref{on-bundle-UX} which is given by the restriction of Euler exact sequence on $\PB^{4}$,
\begin{equation}\label{def-of-UX}
\xymatrix@C=0.5cm{
0 \ar[r]^{} & \mathcal{U}_{X} \ar[r]^{} & \CO_{X}^{\oplus 5} \ar[r]^{} & \CO_{X}(H)\ar[r]^{} & 0,
}
\end{equation}
where $\mathcal{U}_{X}:=\Omega_{\mathbb{P}^{4}}(H) |_{X}$.
Then we have following (see \cite[Lemmas 5.3 and 5.4]{FLZ23}):

\begin{lem}\label{charact-UX}
Let $E$ be a $\mu_{H}$-slope stable sheaf with Chern character $\ch_{\leq 1}(E)=(4,-H)$.
Then $H\cdot \ch_{2}(E)\leq -\frac{1}{2}H^{3}$.
If moreover $H\cdot \ch_{2}(E)=-\frac{1}{2}H^{3}$,
then $\ch_{3}(E)\leq -\frac{1}{6}H^{3}$ and the following conditions are equivalent:
\begin{enumerate}
  \item[(1)] $\ch_{3}(E)=-\frac{1}{6}H^{3}$;
  \item[(2)] $E$ is a reflexive sheaf;
  \item[(3)] $E \cong \mathcal{U}_{X}$.
\end{enumerate}
\end{lem}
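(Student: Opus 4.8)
The plan is to combine Serre duality with the wall-and-chamber analysis already developed for objects of rank $4$ and first Chern class $-H$ in Section \ref{Proof-main-result}. First I would establish the inequality $H\cdot\ch_2(E)\le -\tfrac12 H^3$. Since $E$ is $\mu_H$-slope stable with $\mu_H(E)=-\tfrac14$, Proposition \ref{Li-bound}(1) already gives $H\cdot\ch_2(E)\le 0$; to improve this to $-\tfrac12 H^3$ one uses the integrality constraint $c_2(E)\in\frac{\ZN}{3}H^2$ (so $H\cdot\ch_2(E)\in\frac16\ZN H^3$) together with a tilt-stability argument ruling out the intermediate values, exactly in the spirit of Lemma \ref{auxiliary-lem2} and Proposition \ref{mainsh-ch3-char}: pass to the double dual if necessary, use Proposition \ref{limit-tilt-stab-Gie-stab-prop} and Lemma \ref{mainwall1-comput} to see that $E$ (or a shift) is $\sigma_{\alpha,\beta}$-stable on a suitable vertical line, compare phases with $\CO_X(-2H)[1]$ or $\CO_X(-H)[1]$ along an explicit numerical wall, deduce a vanishing $\Ext^2(E,E)=0$ or $H^2(X,E)=0$ by Serre duality, and derive a contradiction from Hirzebruch--Riemann--Roch. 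Here one should reference \cite[Lemmas 5.3 and 5.4]{FLZ23} for the bulk of this computation rather than redo it.

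Next, assuming $H\cdot\ch_2(E)=-\tfrac12 H^3$, I would prove $\ch_3(E)\le -\tfrac16 H^3$ by the same HRR-plus-vanishing mechanism: since $\mu_H(E)>-1$, Proposition \ref{limit-tilt-stab-Gie-stab-prop} puts $E\in\Coh^{-1}(X)$ as a $\sigma_{\alpha,-1}$-stable object for $\alpha\gg0$, and the wall analysis (no walls along $\beta=-1$ for this Chern character, from the Section \ref{Proof-main-result} lemmas or \cite{FLZ23}) extends stability to all $\alpha>0$; comparing $\mu_{\alpha,-1}$-slopes with the stable object $\CO_X(-2H)[1]$ as $\alpha\to 0^+$ forces $\Hom(E,\CO_X(-2H)[1])=0$, hence $H^2(X,E)=0$ by Serre duality, and then $\chi(E)=\ch_3(E)+\tfrac16 H^3 = h^0-h^1\le$ (a nonpositive quantity after also killing $H^3$ via $\Hom(E,\CO_X(-2H))=0$), giving the bound.

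For the equivalences, I would argue $(3)\Rightarrow(2)\Rightarrow(1)\Rightarrow(3)$. The implication $(3)\Rightarrow(2)$ is immediate since $\mathcal{U}_X=\Omega_{\PB^4}(H)|_X$ is locally free. For $(2)\Rightarrow(1)$: if $E$ is reflexive then $\DD(E)$ is a sheaf, so $\mathcal{E}xt^{>1}(E,\CO_X)=0$; translating into the tilt-stability language of Proposition \ref{torsion-sh-stable-case}'s proof (the triangle $\tilde G\to\DD(G)\to T[-1]$ of \cite[Proposition 5.1.3]{BMT14}) the torsion part $T$ vanishes, which pins down $\ch_3(E)$ at its extremal value $-\tfrac16 H^3$. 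For $(1)\Rightarrow(3)$: with $\ch(E)=(4,-H,-\tfrac12 H^2,-\tfrac16 H^3)=\ch(\mathcal{U}_X)$, one shows $E\in\Ku(X)$ exactly as in Proposition \ref{GieMod-embed-BirMod} (the relevant $\Ext^2$ vanishings against $\CO_X$ and $\CO_X(H)$ follow from the same wall computations), so $E$ defines a point of a moduli space of $\sigma$-stable objects with the same numerical class as $\mathcal{U}_X$; then invoking the Euler sequence \eqref{def-of-UX} to see $\mathcal{U}_X$ is itself $\sigma$-stable and using that such a moduli space of objects in the Kuznetsov component is a fine moduli space with no strictly semistables in this class (the numerical class is primitive up to the relevant divisibility), one concludes $E\cong\mathcal{U}_X$. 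Alternatively, and more directly, one computes $H^0(X,E(-H))$, $H^0(X,E)$ and the connecting maps from the vanishings above to reconstruct the Euler-type presentation $0\to E\to\CO_X^{\oplus 5}\to\CO_X(H)\to0$, forcing $E\cong\mathcal{U}_X$.

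The main obstacle I anticipate is the last implication $(1)\Rightarrow(3)$: passing from equality of Chern characters and a handful of cohomology vanishings to an actual isomorphism with $\mathcal{U}_X$ requires either a clean uniqueness statement for the relevant moduli space or an explicit reconstruction of the defining resolution, and checking that no other $\mu_H$-slope stable sheaf (a priori there could be a different extension or a non-split bundle with the same invariants) can occur. This is where the cited results \cite[Lemmas 5.3 and 5.4]{FLZ23} do the real work, and I would lean on them rather than reprove the rigidity from scratch.
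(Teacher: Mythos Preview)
Your outline has two concrete gaps.

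First, the moduli-in-$\Ku(X)$ approach to $(1)\Rightarrow(3)$ cannot work: $\mathcal{U}_X\notin\Ku(X)$. From the Euler sequence \eqref{def-of-UX} twisted by $\CO_X(-H)$ one computes $\Ext^1(\CO_X(H),\mathcal{U}_X)\cong H^1(X,\mathcal{U}_X(-H))\cong H^0(X,\CO_X)=\CN$, so $\mathcal{U}_X$ is not right-orthogonal to $\CO_X(H)$. There is thus no Kuznetsov-component moduli space containing $\mathcal{U}_X$ whose rigidity you could invoke.

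Second, your $(2)\Rightarrow(1)$ step asserts that reflexivity forces $\DD(E)$ to be a sheaf and hence the zero-dimensional torsion $T$ in the triangle of \cite[Proposition 5.1.3]{BMT14} to vanish. But reflexive is weaker than locally free: a reflexive sheaf on a smooth threefold can have nonzero $\CExt^{\ge 1}(E,\CO_X)$ supported in points, so this deduction is not valid as stated.

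The paper runs the cycle in the opposite direction, and this is where your second alternative lands naturally. The implication $(1)\Rightarrow(2)$ is quoted from \cite[Lemma 5.3]{FLZ23} (it is the standard double-dual argument: if $E$ were not reflexive the bounds on $\ch_2$ and $\ch_3$ would be violated by $E^{\vee\vee}$). The substantive implication is $(2)\Rightarrow(3)$, proved exactly by the reconstruction you sketch: with $E$ reflexive one has the vanishing $\Hom(E,\CO_X[2])\cong\Hom(\CO_X(2H),E[1])=0$, so Hirzebruch--Riemann--Roch gives $\hom(E,\CO_X)\ge\chi(E,\CO_X)\ge 5$; this produces a short exact sequence $0\to G\to E[1]\to\CO_X^{\oplus 5}[1]\to 0$ of tilt-semistable objects, with $\ch(G)=(1,H,\tfrac12 H^2,-\ch_3(E))$. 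Then \cite[Proposition 4.20(i)]{BBF+} bounds $\ch_3(G(-H))\le 0$, forcing $\ch_3(E)=-\tfrac16 H^3$ and $G\cong\CO_X(H)$, whence $E\cong\mathcal{U}_X$. Reflexivity is what makes $E[1]$ tilt-stable so that this sequence lives in the right abelian category; this is why the reconstruction belongs under hypothesis $(2)$ rather than $(1)$.
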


\begin{proof}
The maximal second Chern character of $E$ was obtained in \cite[Lemma 5.4]{FLZ23}.
For (1) $\Rightarrow$ (2), it follows from \cite[Lemma 5.3]{FLZ23}.
(3) $\Rightarrow$ (1) is a direct consequence, 
since $\mathcal{U}_{X}$ is $\mu_{H}$-slope stable.
For (2) $\Rightarrow$ (3),  by \cite[Lemma 5.4]{FLZ23},
one has $\ch_{3}(E)\leq -\frac{1}{6}H^{3}$. 
By the Hirzebruch--Riemann--Roch theorem, 
we have $\chi(E,\CO_{X})\geq 5$. 
Since $E$ is reflexive, combining with \cite[Proposition 4.20 (i)]{BBF+},  (2) $\Rightarrow$ (3) follows from the proof of \cite[Lemma 5.4]{FLZ23}. 
In fact, note that $\Hom(E,\CO_{X}[2])\cong \Hom(\CO_{X}(2H),E[1])=0$.
Thus, we have $\hom(E,\CO_{X})\geq \chi(E,\CO_{X})\geq 5$.
Then there is a short exact sequence of tilt semistable objects
$$
\xymatrix@C=0.5cm{
0 \ar[r]^{} & G \ar[r]^{} & E[1] \ar[r]^{} & \CO_{X}^{\oplus 5}[1]\ar[r]^{} & 0.
}
$$ 
Hence we get $\ch(G)=(1,H,\frac{1}{2}H^{2},-\ch_{3}(E))$.
By \cite[Proposition 4.20 (i)]{BBF+}, we obtain $\ch_{3}(G(-H))=-\ch_{3}(E)-\frac{1}{6}H^{3}\leq 0$ and thus $\ch_{3}(E)\geq -\frac{1}{6}H^{3}$. 
It follows that $\ch_{3}(E)=-\frac{1}{6}H^{3}$ and $\ch(G(-H))=(1,0,0,0)$. 
Thus, we have $G=\CO_{X}(H)$ and thus $E\cong \mathcal{U}_{X}$.
\end{proof}

Then we obtain a simple observation.

\begin{lem}\label{non-reflexive-charact}
Let $\ell \subset X$ be a line.
Then $\Hom(\mathcal{U}_{X},\CO_{\ell}(-H))=\CN$ and a non-zero morphism $\phi_{\ell}: \mathcal{U}_{X}\rightarrow \CO_{\ell}(-H)$ is surjective.
In particular, the kernel sheaf of $\phi_{\ell}$ is a non-reflexive $\mu_{H}$-slope stable sheaf with Chern character $\nu$ and $\CExt^{1}(K_{\ell},\CO_{X})\cong \CO_{\ell}(H)$.
\end{lem}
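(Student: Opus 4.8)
The plan is to reduce the whole statement to the restriction of the Euler sequence \eqref{def-of-UX} to the line $\ell$. Since $\mathcal{U}_{X}=\Omega_{\PB^{4}}(H)|_{X}$ and $\Omega_{\PB^{4}}$ restricted to a line in $\PB^{4}$ is $\CO_{\PB^{1}}(-2)\oplus\CO_{\PB^{1}}(-1)^{\oplus 3}$, I would first record $\mathcal{U}_{X}|_{\ell}\cong\CO_{\PB^{1}}(-1)\oplus\CO_{\PB^{1}}^{\oplus 3}$, while $\CO_{\ell}(-H)\cong\CO_{\PB^{1}}(-1)$ as a sheaf on $\ell\cong\PB^{1}$. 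Any morphism $\mathcal{U}_{X}\to\CO_{\ell}(-H)$ factors through $\mathcal{U}_{X}|_{\ell}$, so $\Hom(\mathcal{U}_{X},\CO_{\ell}(-H))\cong\Hom_{\PB^{1}}(\CO(-1)\oplus\CO^{\oplus 3},\CO(-1))\cong\CN$; moreover a nonzero element is, up to scalar, the projection onto the $\CO_{\PB^{1}}(-1)$-summand, hence surjective. (One can alternatively obtain $\Hom(\mathcal{U}_{X},\CO_{\ell}(-H))\cong\CN$ by applying $\Hom(-,\CO_{\ell}(-H))$ to \eqref{def-of-UX}, noting that the only surviving term in the long exact sequence is $\Ext^{1}(\CO_{X}(H),\CO_{\ell}(-H))=H^{1}(\PB^{1},\CO(-2))=\CN$.)

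Granting surjectivity of $\phi_{\ell}$, put $K_{\ell}:=\ker(\phi_{\ell})$, a torsion-free sheaf. A short Chern character computation gives $\ch(K_{\ell})=\ch(\mathcal{U}_{X})-\ch(\CO_{\ell}(-H))=(4,-H,-\tfrac{1}{2}H^{2},-\tfrac{1}{6}H^{3})-(0,0,\tfrac{1}{3}H^{2},-\tfrac{1}{3}H^{3})=\nu$. Any saturated proper subsheaf of $K_{\ell}$ is also a subsheaf of the $\mu_{H}$-slope stable bundle $\mathcal{U}_{X}$, and $\mu_{H}(K_{\ell})=\mu_{H}(\mathcal{U}_{X})=-\tfrac{1}{4}$, so $K_{\ell}$ is $\mu_{H}$-slope stable; since $\gcd(4,1)=1$ this coincides with $H$-Gieseker stability, hence $K_{\ell}\in\overline{M}_{X}(\nu)$.

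For the remaining assertions I would apply $\CRHom(-,\CO_{X})$ to $0\to K_{\ell}\to\mathcal{U}_{X}\to\CO_{\ell}(-H)\to 0$. As $\ell$ is a smooth (lci) curve of codimension $2$ in $X$, one has $\CExt^{i}(\CO_{\ell}(-H),\CO_{X})=0$ for $i\ne 2$, and Grothendieck--Verdier duality identifies $\CExt^{2}(\CO_{\ell}(-H),\CO_{X})$ with the pushforward to $X$ of $\CO_{\PB^{1}}(1)\otimes\det N_{\ell/X}$; here $\det N_{\ell/X}=\omega_{\ell}\otimes\omega_{X}^{-1}|_{\ell}=\CO_{\PB^{1}}(-2)\otimes\CO_{\PB^{1}}(2)=\CO_{\PB^{1}}$, so $\CExt^{2}(\CO_{\ell}(-H),\CO_{X})\cong\CO_{\ell}(H)$. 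Since $\mathcal{U}_{X}$ is locally free, $\CExt^{\ge 1}(\mathcal{U}_{X},\CO_{X})=0$, and the long exact sequence collapses to $\CHom(K_{\ell},\CO_{X})\cong\mathcal{U}_{X}^{\vee}$ together with $\CExt^{1}(K_{\ell},\CO_{X})\cong\CExt^{2}(\CO_{\ell}(-H),\CO_{X})\cong\CO_{\ell}(H)$. Because $\mathcal{U}_{X}^{\vee}$ is locally free, $K_{\ell}^{\vee\vee}\cong\mathcal{U}_{X}$, and as $K_{\ell}\subsetneq\mathcal{U}_{X}$ properly the natural map $K_{\ell}\to K_{\ell}^{\vee\vee}$ fails to be an isomorphism; thus $K_{\ell}$ is non-reflexive.

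The computations involved are short, and I expect the only points demanding care to be the splitting type $\mathcal{U}_{X}|_{\ell}\cong\CO_{\PB^{1}}(-1)\oplus\CO_{\PB^{1}}^{\oplus 3}$ (equivalently, that $\mathrm{im}(\phi_{\ell})$ is all of $\CO_{\ell}(-H)$) and the twist in $\det N_{\ell/X}$ in the duality step. Should one wish to avoid the restriction computation, surjectivity can instead be deduced by assuming $\coker(\phi_{\ell})$ has positive length $k$ and observing that $\ker(\phi_{\ell})$ would then be a $\mu_{H}$-slope stable sheaf with $\ch_{\le 1}=(4,-H)$, $H\cdot\ch_{2}=-\tfrac{5}{6}H^{3}$ and $\ch_{3}=\tfrac{1+2k}{6}H^{3}>\tfrac{1}{6}H^{3}$, contradicting Proposition \ref{mainsh-ch3-char}.
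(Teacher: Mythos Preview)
Your proposal is correct and largely parallels the paper's proof; the one substantive difference is in how you establish surjectivity of $\phi_{\ell}$. You argue directly via the splitting type $\mathcal{U}_{X}|_{\ell}\cong\CO_{\PB^{1}}(-1)\oplus\CO_{\PB^{1}}^{\oplus 3}$, which immediately identifies the unique (up to scalar) nonzero map as the projection onto the $\CO_{\PB^{1}}(-1)$-summand and hence surjective. The paper instead runs the four-term sequence \eqref{to-ker-surj}, observes that $K_{\ell}$ is $\mu_{H}$-slope stable, and then uses the Chern-character bound of Proposition~\ref{mainsh-ch3-char} (itself resting on the wall analysis of Lemma~\ref{mainwall1-comput}) to force $C_{\ell}=0$. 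Your route is more elementary and self-contained, avoiding the tilt-stability machinery for this step; the paper's route has the virtue of reusing a bound it needs elsewhere anyway. Your closing remark already anticipates the paper's method; note that to make that alternative airtight one should observe (as you implicitly do) that any nonzero image inside the line bundle $\CO_{\ell}(-H)$ is again a line bundle on $\ell$, so the cokernel is automatically $0$-dimensional and the case $\ch_{2}(C_{\ell})>0$ does not arise. The remaining parts of your argument---the computation of $\Hom$ via \eqref{def-of-UX}, the $\mu_{H}$-stability of $K_{\ell}$ as a full-rank subsheaf of $\mathcal{U}_{X}$, and the $\CExt$ computation via Grothendieck--Verdier duality---match the paper's proof.
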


\begin{proof}
Applying $\Hom(-,\CO_{\ell}(-H))$ to \eqref{def-of-UX},
we get 
$$
\Hom(\mathcal{U}_{X},\CO_{\ell}(-H))\cong 
\Ext^{1}(\CO_{X}(H),\CO_{\ell}(-H))
=
\CN.
$$
Suppose $\phi_{\ell}: \mathcal{U}_{X}\rightarrow \CO_{\ell}(-H)$ is a non-trivial morphism.
Up to a multiplication by $\CN^{\times}$, the morphism $\phi_{\ell}$ is unique. 
Thus the kernel sheaf $K_{\ell}:=\ker(\phi_{\ell})$ is unique for each line $\ell\subset X$.
Since $\mathcal{U}_{X}$ is $\mu_{H}$-slope stable and $\CO_{\ell}(-H)$ is is supported in the line $\ell$,
it follows that $\mu_{H}(K_{\ell})=-\frac{1}{4}$ and $K_{\ell}$ is also $\mu_{H}$-slope stable.
Then there is an exact sequence
\begin{equation}\label{to-ker-surj}
\xymatrix@C=0.5cm{
0 \ar[r]^{} & K_{\ell} \ar[r]^{} & \mathcal{U}_{X} \ar[r]^{\phi_{\ell}\;\;\;\;\;} & \CO_{\ell}(-H)\ar[r]^{} & C_{\ell}\ar[r]^{} &0,
}
\end{equation}
where the cokernel sheaf $C_{\ell}$ of $\phi_{\ell}$ is supported in the line $\ell$.
We may assume $\ch(C_{\ell})=(0,0,\frac{s}{3}H^{2},\ch_{3}(C_{\ell}))$, where $s\geq 0$.
Then the Chern character
$$
\ch(K_{\ell})=(4,-H,(-\frac{5}{6}+\frac{s}{3})H^{2},\frac{1}{6}H^{3}+\ch_{3}(C_{\ell})).
$$
Since $C_{\ell}$ supported in $\ell$, thus $s=0$ or $s=1$.
\begin{enumerate}
  \item[(1)] If $s=0$, we may assume $\ch_{3}(C_{\ell})=\frac{t}{3}H^{3}$, where $t\geq 0$.
  Then $\ch(K_{\ell})=(4,-H,-\frac{5}{6}H^{2},\frac{1+2t}{6}H^{3})$. 
  By Proposition \ref{mainsh-ch3-char}, 
  we get $\frac{1+2t}{6}\leq \frac{1}{6}$. Thus we get $t=0$ and so $C_{\ell}=0$. 
  This means that $\phi_{\ell}$ is surjective.
  \item[(2)] If $s=1$, then $\ch(C_{\ell})=(0,0,\frac{1}{3}H^{2},\ch_{3}(C_{\ell}))$. 
  Thus, by \eqref{to-ker-surj}, we have $\ch(\mathrm{im}(\phi_{\ell}))=(0,0,0,-\frac{1}{3}H^{3}-\ch_{3}(C_{\ell}))$.
 Since $\mathrm{im}(\phi_{\ell})$ is a torsion sheaf supported in $\ell$. 
 As a result, $\mathrm{im}(\phi_{\ell})$ is a direct image of a torsion sheaf on $\ell$.
 This is a contradiction with the fact that $\phi_{\ell}$ is non-trivial.
  \end{enumerate}
As a consequence, the morphism $\phi_{\ell}$ is surjective.
Then there is a short exact sequence 
$
\xymatrix@C=0.4cm{
0 \ar[r]^{} & K_{\ell} \ar[r]^{} & \mathcal{U}_{X} \ar[r]^{\phi_{\ell}\;\;\;\;\;} & \CO_{\ell}(-H)\ar[r]^{} & 0.
}
$
By Gorthendieck-Verdier duality, 
we have  
$$
\CExt^{1}(\CO_{\ell}(-H),\CO_{X})=0
\;
\textrm{and}\;
\CExt^{2}(\CO_{\ell}(-H),\CO_{X})\cong \CO_{\ell}(H).
$$
Applying $\CHom(-,\CO_{X})$ to the above exact sequence, 
it follows that the double dual $K_{\ell}^{\vee\vee}\cong  \mathcal{U}_{X}$ 
and $\CExt^{1}(E,\CO_{X}) \cong \CExt^{2}(\CO_{\ell}(-H),\CO_{X})\cong \CO_{\ell}(H)$.
Therefore, $K_{\ell}$ is not reflexive.
\end{proof}

Based on Lemma \ref{non-reflexive-charact}, 
the following result implies that there exist non-reflexive $\mu_{H}$-slope stable sheaves with Chern character $\nu$.

\begin{prop}\label{may-not-reflexive}
Let $E$ be a $\mu_{H}$-slope stable sheaf  of Chern character $\ch(E)=\nu$.
Then either $E$ is a reflexive sheaf, 
or $E$ is not reflexive and exactly fits into a short exact sequence 
$$
\xymatrix@C=0.5cm{
0 \ar[r]^{} & E \ar[r]^{} & \mathcal{U}_{X} \ar[r]^{} & \CO_{\ell}(-H)\ar[r]^{} & 0,}
$$
where $\ell\subset X$ is a line.
\end{prop}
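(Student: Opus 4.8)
The approach is to analyze the double dual $E^{\vee\vee}$ together with the quotient $Q:=E^{\vee\vee}/E$. Since $E$ is torsion-free, $Q$ is supported in codimension at least two, so $\ch_{\leq 1}(Q)=0$ and $\ch(E^{\vee\vee})=\ch(E)+\ch(Q)$; moreover $E^{\vee\vee}$ is reflexive and $\mu_{H}$-slope stable with $\ch_{\leq 1}(E^{\vee\vee})=(4,-H)$, because any subsheaf $F\subset E^{\vee\vee}$ of rank $<4$ satisfies $\mu_{H}(F)=\mu_{H}(F\cap E)<\mu_{H}(E)=\mu_{H}(E^{\vee\vee})$ by $\mu_{H}$-slope stability of $E$. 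Assume from now on that $E$ is not reflexive, so $Q\neq 0$.

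First I would pin down $\ch(Q)$. By Lemma~\ref{charact-UX}, $H\cdot\ch_{2}(E^{\vee\vee})\leq-\frac{1}{2}H^{3}$; since $\ch_{2}(E^{\vee\vee})=-\frac{5}{6}H^{2}+\ch_{2}(Q)$ and $\ch_{2}(Q)$ is a non-negative integral multiple of the line class $\frac{1}{3}H^{2}$, this leaves only $H\cdot\ch_{2}(Q)\in\{0,\frac{1}{3}H^{3}\}$. If $H\cdot\ch_{2}(Q)=0$, then $Q$ is zero-dimensional, so $\ch_{\leq 2}(E^{\vee\vee})=(4,-H,-\frac{5}{6}H^{2})$ and Proposition~\ref{mainsh-ch3-char} gives $\ch_{3}(E^{\vee\vee})\leq\frac{1}{6}H^{3}=\ch_{3}(E)$, contradicting $\ch_{3}(E^{\vee\vee})=\ch_{3}(E)+\ch_{3}(Q)>\ch_{3}(E)$. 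Hence $H\cdot\ch_{2}(Q)=\frac{1}{3}H^{3}$, so $H\cdot\ch_{2}(E^{\vee\vee})=-\frac{1}{2}H^{3}$, and the implication (2)$\Rightarrow$(3) in Lemma~\ref{charact-UX} yields $E^{\vee\vee}\cong\mathcal{U}_{X}$.

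It remains to identify $Q$ out of the short exact sequence $0\to E\to\mathcal{U}_{X}\to Q\to 0$, where $\ch(Q)=(0,0,\frac{1}{3}H^{2},-\frac{1}{3}H^{3})$ using $\ch_{3}(\mathcal{U}_{X})=-\frac{1}{6}H^{3}$ from Remark~\ref{on-bundle-UX}. I would first show $Q$ is pure of dimension one: let $T\subseteq Q$ be the maximal zero-dimensional subsheaf and set $E':=\ker(\mathcal{U}_{X}\twoheadrightarrow Q/T)$, so $E\subseteq E'\subseteq\mathcal{U}_{X}$ and $E'/E\cong T$. As $\mathcal{U}_{X}$ is $\mu_{H}$-slope stable, every subsheaf of $E'$ of rank $<4$ has slope $<\mu_{H}(\mathcal{U}_{X})=\mu_{H}(E')$, so $E'$ is $\mu_{H}$-slope stable; since $\ch_{\leq 2}(E')=(4,-H,-\frac{5}{6}H^{2})$ and $\ch_{3}(E')=\frac{1}{6}H^{3}+\ch_{3}(T)$, Proposition~\ref{mainsh-ch3-char} forces $\ch_{3}(T)\leq 0$, hence $T=0$. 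Now $\ch_{2}(Q)=\frac{1}{3}H^{2}$ is the class of one reduced line $\ell\subset X$ with multiplicity one, so $I_{\ell}\cdot Q$ is supported in dimension zero and therefore vanishes by purity; thus $Q$ is the push-forward to $X$ of a torsion-free, hence locally free, rank-one sheaf $L$ on $\ell\cong\PB^{1}$. Comparing third Chern characters — $\ch_{3}(\CO_{\ell}(kH))=\frac{k}{3}H^{3}$ against $\ch_{3}(Q)=-\frac{1}{3}H^{3}$ — gives $L\cong\CO_{\PB^{1}}(-1)$, i.e. $Q\cong\CO_{\ell}(-H)$, which produces the claimed short exact sequence. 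The opposite direction, that every such kernel is a non-reflexive $\mu_{H}$-slope stable sheaf with Chern character $\nu$, is exactly Lemma~\ref{non-reflexive-charact}.

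I expect the purity step to be the main obstacle: ruling out embedded points in $Q$ forces one to produce the auxiliary sheaf $E'$, verify its $\mu_{H}$-slope stability directly from that of $\mathcal{U}_{X}$, and feed it into Proposition~\ref{mainsh-ch3-char}; the remaining work is bookkeeping with Chern characters.
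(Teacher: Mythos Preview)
Your argument is correct and follows the same skeleton as the paper's proof: pass to the double dual, use Lemma~\ref{charact-UX} to bound $H\cdot\ch_2(E^{\vee\vee})$, eliminate the case $s=0$ via Proposition~\ref{mainsh-ch3-char}, and in the case $s=1$ identify $E^{\vee\vee}\cong\mathcal U_X$. The only substantive difference is how you pin down $Q\cong\CO_\ell(-H)$. The paper invokes the fact that $E\in\Ku(X)$ (established earlier in Proposition~\ref{GieMod-embed-BirMod}) to get $\Hom(\CO_X,Q)=0$, then produces a section of $Q(H)$ via Riemann--Roch and analyses its image as a structure sheaf. You instead prove purity of $Q$ directly by building the intermediate sheaf $E'$ and feeding it back into Proposition~\ref{mainsh-ch3-char}, after which the identification of $Q$ as a line bundle on a line is immediate from the cycle class. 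Your route is a bit more self-contained in that it does not appeal to $E\in\Ku(X)$; the paper's route trades that for avoiding the auxiliary $E'$ construction. Both are short and either would serve.
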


\begin{proof}
Since $E$ is a torsion-free sheaf, 
so there exists a short exact sequence
$$
\xymatrix@C=0.5cm{
0 \ar[r]^{} & E \ar[r]^{} & E^{\vee\vee} \ar[r]^{} & T \ar[r]^{} & 0,}
$$
where $T$ is a torsion sheaf supported in codimension $\geq 2$.
We may write the Chern character $\ch(T):=(0,0,\frac{s}{3}H^{2}, \ch_{3}(T))$ with $s \in \ZN_{\geq 0}$.
Note that the dual sheaf of a $\mu_{H}$-slope stable sheaf is also $\mu_{H}$-slope stable.
Hence, the double dual sheaf $E^{\vee \vee}$ is $\mu_{H}$-slope stable. 
By Lemma \ref{charact-UX}, 
we have 
$$
H\cdot \ch_{2}(E^{\vee\vee})=-\frac{5}{6}H^{3}+\frac{s}{3}H^{3} \leq -\frac{1}{2}H^{3}.
$$
Hence, we get $s=0$ or $s=1$.

(1) If $s=0$, we may assume $\ch(T)=(0,0,0,\frac{t}{3}H^{3})$ with $t\in \ZN_{\geq0}$.
By Proposition \ref{mainsh-ch3-char}, we have 
$$
\ch_{3}(E^{\vee\vee})=\ch_{3}(E)+\frac{t}{3}H^{3} \leq \frac{1}{6}H^{3}=\ch_{3}(E).
$$
This yields that $\ch(E)=\ch(E^{\vee \vee})$ and thus $t=0$. 
Therefore, we get $T=0$. Thus, $E\cong E^{\vee\vee}$, i.e., $E$ is reflexive.

(2) If  $s=1$, 
then $\ch_{\leq 2}(E^{\vee\vee})=(4,-H,-\frac{1}{2}H^{2})$.
Since $E^{\vee\vee}$ is reflexive, 
by Lemma \ref{charact-UX}, 
we have $E^{\vee\vee}\cong \mathcal{U}_{X}$ and thus $\ch(T)=(0,0,\frac{1}{3}H^{2},-\frac{1}{3}H^{3})$.
From \eqref{def-of-UX}, we have $\Ext^{i}(\CO_{X},\mathcal{U}_{X})=0$ for all $i$.
Since $E\in \Ku(X)$, it follows that $\Hom(\CO_{X},T)=0$.
Twisting $T$ by $\CO_{X}(H)$, 
we have $\ch(T(H))=(0,0,\frac{1}{3}H^{2},0)$.
By the Hirzebruch--Riemann--Roch theorem, we have $\chi(T(H))=1$.
Since $T(H)$ is supported in a curve,
hence we have $\Hom(\CO_{X},T(H))\neq 0$.
Let $\phi\in \Hom(\CO_{X},T(H))$ be a non-trivial morphism.
Then there is an exact sequence
$$
\xymatrix@C=0.5cm{
0 \ar[r]^{} & K \ar[r]^{} & \CO_{X} \ar[r]^{\phi\;\;\;} & T(H) \ar[r]^{} & C \ar[r]^{} & 0.}
$$
According to the Chern character of $T(H)$, we have $\ch_{2}(C)=0$ or $\ch_{2}(C)=\frac{1}{3}H^{2}$.
\begin{enumerate}
\item[(i)] If $\ch_{2}(C)=0$, then $\ch(\mathrm{im}(\phi))=(0,0,\frac{1}{3}H^{2},-\frac{k}{3}H^{3})$, 
where $k\geq 0$.
Since $\mathrm{im}(\phi)$ is a structure sheaf, 
so $k=0$ and thus $T(H)=\mathrm{im}(\phi) = \CO_{\ell}$ for a line $\ell\subset X$.  
\item[(ii)]  If $\ch_{2}(C)=\frac{1}{3}H^{2}$, then $\mathrm{im}(\phi)$ is the structure sheaf of points. 
This contradicts with $\Hom(\CO_{X},T)=0$.
\end{enumerate}
Therefore, we get $T(H) \cong \CO_{\ell}$ and so $T \cong \CO_{\ell}(-H)$.
Then Lemma \ref{non-reflexive-charact} concludes the assertion.
\end{proof}


\subsection{Characterization of stable sheaves}

We will first give a general construction of $\mu_{H}$-slope stable vector bundles with Chern character $\nu$.
Let us recall that the {\it left mutation} through $\CO_{X}$
$$
\mathrm{L}_{\CO_{X}}(F)
:=
\mathrm{Cone}\Big(\RHom(\CO_{X},F)\otimes \CO_{X} \xrightarrow{\mathrm{ev}} F \Big),
$$
where $F\in \DC(X)$. In particular, one has $\mathrm{L}_{\CO_{X}}(\CO_{X})=0$.

\begin{prop}\label{more-vector-bundle}
Let $C\subset X$ be a smooth quartic rational  curve and 
$$
E_{C}:=\mathrm{L}_{\CO_{X}} (\DD(I_{C}(H)))[-1].
$$
Then $E_{C}\in \Ku(X)$ is a $\mu_{H}$-slope stable vector bundle with Chern character $\ch(E_{C})=\nu$.
\end{prop}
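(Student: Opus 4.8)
The plan is to analyze $E_C := \mathrm{L}_{\CO_X}(\DD(I_C(H)))[-1]$ step by step, first pinning down its Chern character, then showing it is an honest vector bundle, and finally establishing $\mu_H$-slope stability together with membership in $\Ku(X)$. First I would compute $\ch(\DD(I_C(H)))$. Starting from the structure sequence $0 \to I_C \to \CO_X \to \CO_C \to 0$ twisted by $H$, and using that $C$ is a smooth rational quartic (so $\ch(\CO_C) = (0,0,\tfrac{4}{3}H^2, \ch_3(\CO_C))$ with $\ch_3$ determined by $\chi(\CO_C)=1$ via Hirzebruch--Riemann--Roch, giving $\ch_3(\CO_C) = -H^3$), one gets $\ch(I_C(H))$ explicitly. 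Applying $\DD(-) = \CRHom(-,\CO_X)[1]$ dualizes and negates the odd-degree parts in the appropriate way, and I would record $\ch(\DD(I_C(H)))$. Then, since $\mathrm{L}_{\CO_X}$ replaces an object $F$ by the cone of $\RHom(\CO_X,F)\otimes\CO_X \to F$, I need $\RHom(\CO_X, \DD(I_C(H))) = \RHom(\CO_X, \CRHom(I_C(H),\CO_X)[1]) \cong \RHom(I_C(H),\CO_X)[1]$, which by Serre duality is dual to $\RHom(\CO_X, I_C(H)(-2H)[3]) = \RHom(\CO_X, I_C(-H))[3]$; computing $H^\bullet(X, I_C(-H))$ (using that $C$ is not contained in any hyperplane, so $H^0 = 0$, and the structure sequence) gives the dimension of this complex, hence the Chern character shift. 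Putting the pieces together and applying the $[-1]$ shift should yield $\ch(E_C) = \nu = (4,-H,-\tfrac{5}{6}H^2,\tfrac{1}{6}H^3)$.

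Next I would show $E_C$ is locally free. The natural approach: $\DD(I_C(H))$ is the derived dual of a torsion-free sheaf on a smooth threefold, so it sits in a triangle relating $\CH^0(\DD(I_C(H)))$ (a reflexive sheaf) and $\CExt^{\geq 2}$-terms supported in low dimension; in fact for $I_C$ with $C$ a smooth (hence locally complete intersection) curve, $\CExt^i(I_C(H),\CO_X)$ vanishes for $i \geq 2$ except possibly a point-supported piece, and one checks $\DD(I_C(H))$ is concentrated in degree $0$ and equals a sheaf whose only non-reflexivity is controlled by $\CExt^1(\CO_C, \CO_X)$, which for a smooth curve is a line bundle on $C$. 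Then $E_C$, being the left mutation (shifted), fits into a short exact sequence $0 \to \CO_X^{\oplus h} \to \DD(I_C(H)) \to E_C[1] \to \cdots$; chasing cohomology sheaves of this triangle, using that $\DD(I_C(H))$ has homological dimension $\leq 1$ and the evaluation map is surjective in the relevant degree (which follows from global generation / the vanishing $H^1$ computations above), one deduces $E_C$ is a sheaf with $\CExt^i(E_C,\CO_X) = 0$ for $i > 0$, hence reflexive, and since $\ch_0 \neq 0$ and there are no lower-dimensional $\CExt$ obstructions, locally free. I anticipate this is the most delicate step: getting the exact homological behavior of $\DD(I_C(H))$ and verifying the evaluation map has the expected cokernel requires care with the $\CExt$-sheaves of $I_C$ and possibly invoking the hypothesis that $C$ moves in a nice family as in \cite{HRS05}.

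For $\mu_H$-slope stability, I would first note $\mu_H(E_C) = -\tfrac14$, and since $\gcd(4,1)=1$ it suffices (by the Remark after Proposition \ref{limit-tilt-stab-Gie-stab-prop}) to rule out destabilizing subsheaves; here Lemma \ref{mainwall1-comput} already tells us there are no tilt walls to the left of $\beta=-\tfrac14$ for objects with $\ch_{\leq 2} = (4,-H,-\tfrac56 H^2)$, so it is enough to show $E_C$ is tilt-stable somewhere on or near the vertical line $\beta = -\tfrac14$, equivalently that $E_C$ (or a shift) appears as a tilt-stable object for $\alpha \gg 0$ and $\beta$ slightly less than $-\tfrac14$ — this would follow from Proposition \ref{limit-tilt-stab-Gie-stab-prop} once we know $E_C$ is $\mu_H$-semistable, so I would instead argue directly: any $\mu_H$-destabilizing subsheaf would have to come from a subobject of $\DD(I_C(H))$ compatible with the mutation triangle, and the semiorthogonality $\Hom(\CO_X, E_C[k]) = 0$ (which I would verify using $\RHom(\CO_X, \DD(I_C(H)))$ being killed by the mutation) constrains such subsheaves; combined with the Bogomolov-type inequality and Proposition \ref{Li-bound} applied to any hypothetical destabilizer, one excludes all cases. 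Finally, $E_C \in \Ku(X)$ amounts to $\RHom(\CO_X, E_C) = 0$ and $\RHom(\CO_X(H), E_C) = 0$: the first is immediate from the definition of left mutation through $\CO_X$ (which precisely kills $\RHom(\CO_X, -)$, up to the shift), and the second I would check by computing $\RHom(\CO_X(H), \DD(I_C(H))) \cong \RHom(I_C(H), \CO_X(-H))[1]$ and dualizing via Serre duality to $H^\bullet(X, I_C)$, which vanishes in all degrees since $C \subset X$ is a curve with $H^0(I_C) = 0 = H^0(\CO_X)/H^0(\CO_C)$-obstruction and $H^{>0}(\CO_X) = 0$, $H^{>0}(\CO_C) = 0$. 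Assembling these four facts completes the proof.
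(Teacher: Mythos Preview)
Your proposal has a concrete error and a genuine gap. First, the claim that $\DD(I_C(H))$ is ``concentrated in degree $0$'' is false: since $I_C(H)$ is torsion-free of rank one, $\CH^{-1}(\DD(I_C(H))) = \CHom(I_C(H),\CO_X) = \CO_X(-H) \neq 0$, so $\DD(I_C(H))$ genuinely lives in degrees $-1$ and $0$. This does not by itself break the argument, but it means your cohomology-sheaf chase must account for this extra term, and the surjectivity of the evaluation map on $\CH^0$ (i.e., that $\CO_X^{\oplus 3} \twoheadrightarrow \CO_C(2)$) needs an actual proof rather than a gesture at ``global generation''.

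The more serious gap is the $\mu_H$-stability argument, which as written is circular: you want to use Lemma~\ref{mainwall1-comput} to propagate tilt stability, but to get tilt stability of $E_C$ in the first place you invoke Proposition~\ref{limit-tilt-stab-Gie-stab-prop}, which requires $\mu_H$-stability. Your fallback ``direct'' argument (destabilizing subsheaves coming from the mutation triangle) is not a proof. The paper's key idea, which you are missing, is to work on the \emph{other} side: one shows that $\DD(I_C(H))$ and $\CO_X^{\oplus 3}[1]$ are both tilt semistable of the same slope along the semicircle $W(\tfrac{5}{6},\tfrac{5}{6})$, so their extension $E_C[1]$ is tilt semistable there; since $\ch_1(E_C[1]) = H$ is primitive there are no walls along $\beta = 0$, and then \cite[Proposition~4.9]{BBF+} gives directly that $\CH^{-1}(E_C[1])$ is $\mu_H$-slope stable and reflexive. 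A short case analysis using Lemma~\ref{charact-UX} and Proposition~\ref{mainsh-ch3-char} (and the observation that the support of $\CH^0(E_C[1])$ would have to lie in $C$, hence cannot contain a line) forces $\CH^0(E_C[1]) = 0$, so $E_C$ is this stable reflexive sheaf; local freeness then follows by applying $\CRHom(-,\CO_X)$ to the mutation triangle, which yields $0 \to \CO_X^{\oplus 3} \to E_C^\vee \to I_C(H) \to 0$ and hence $\CExt^i(E_C,\CO_X)=0$ for $i>0$.
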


\begin{proof}
Since $C$ is a smooth quartic rational  curve, 
thus we have a short exact sequence
\begin{equation}\label{degree4-curve-stsh}
\xymatrix@C=0.5cm{
0 \ar[r] & I_{C}  \ar[r]^{} & \CO_{X} \ar[r] &  \CO_{C}   \ar[r] & 0.
}
\end{equation}
Applying $\RHom(\CO_{X},-)$ and $\RHom(\CO_{X}(H),-)$ to \eqref{degree4-curve-stsh}, 
we have 
$$
\RHom(\CO_{X}(H),\DD(I_{C}(H)))=0
\,
\textrm{ and }
\,
\RHom(\CO_{X},\DD(I_{C}(H)))=\CN^{3}[0].
$$
Therefore, we have an exact triangle
\begin{equation}\label{left-mu-exact}
\xymatrix@C=0.5cm{
\CO_{X}^{\oplus 3}  \ar[r] & \DD(I_{C}(H)) \ar[r] & E_{C}[1] \ar[r] &  \CO_{X}^{\oplus 3}[1]
}
\end{equation}
and $E_{C}\in \Ku(X)$.

Note that both $\CO_{X}(-H)[1]$ and $\CO_{C}(2)$ are $\sigma_{\alpha,\beta}$-semistable for $\beta\geq -1$.
By Serre duality, 
we get 
$
\Ext^{1}(\CO_{C}(2),\CO_{X}(-H)[1]))\cong \Ext^{1}(\CO_{X},\CO_{C}(-2))=\CN.
$
Applying $\CRHom(-,\CO_{X}(-H))$ to \eqref{degree4-curve-stsh}, 
we have the unique non-trivial extension
$$
\xymatrix@C=0.5cm{
0 \ar[r] & \CO_{X}(-H)[1] \ar[r]^{} & \DD(I_{C}(H)) \ar[r] &  \CO_{C}(2)  \ar[r] & 0.}
$$
Since $I_{C}(H)$ is $\mu_{H}$-slope stable sheaf with $\mu_{H}(I_{C}(H))=1$,
by Proposition \ref{limit-tilt-stab-Gie-stab-prop},  
$I_{C}(H)$ is $\sigma_{\alpha,\beta}$-stable for $\beta<1$ and $\alpha\gg 0$.
Hence, $\DD(I_{C}(H))$ is $\sigma_{\alpha,\beta}$-semistable for $\beta>-1$ and $\alpha\gg 0$.
Note that $\ch(\DD(I_{C}(H)))=(-1,H,\frac{5}{6}H^{2},-\frac{1}{6}H^{3})$ and the numerical $W(\DD(I_{C}(H)),\CO_{X}^{\oplus 3}[1])=W(\frac{5}{6},\frac{5}{6})$.
Clearly $\DD(I_{C}(H))$ has no walls along $\beta=0$ and $W(\frac{5}{6},\frac{5}{6})$ tangent to $\beta=0$. 
Thus, $\DD(I_{C}(H))$ is tilt semistable (strictly or not, depends on $C$ is degenerate or not) along $W(\frac{5}{6},\frac{5}{6})$.
Since $\CO_{X}^{\oplus 3}[1]$ is also $\sigma_{\alpha,\beta}$-semistable for $\beta\geq 0$ and $\alpha>0$,
it follows from \eqref{left-mu-exact} that there is a short exact sequence
\begin{equation}\label{more-vb-ses}
\xymatrix@C=0.5cm{
0 \ar[r] & \DD(I_{C}(H)) \ar[r]^{} & E_{C}[1] \ar[r] &  \CO_{X}^{\oplus 3}[1]  \ar[r] & 0,}
\end{equation}
in the abelian category of $\sigma_{\alpha,\beta}$-semistable objects of the same slope as $\CO_{X}^{\oplus 3}[1]$ and $\DD(I_{C}(H))$.

We set $\tilde{E}:=E_{C}[1]$.
Thus, the object $\tilde{E}$ is tilt-semistable on $W(\frac{5}{6},\frac{5}{6})$ and $\ch(\tilde{E})=(-4,H,\frac{5}{6}H^{2},-\frac{1}{6}H^{3})=-\nu$.
Then there is a short exact sequence
$$
\xymatrix@C=0.5cm{
0 \ar[r]^{} & \CH^{-1}(\tilde{E})[1] \ar[r]^{} & \tilde{E} \ar[r]^{} & \CH^{0}(\tilde{E}) \ar[r]^{} & 0.}
$$
Note that there are no walls along $\beta=0$ for $\tilde{E}$.
By \cite[Proposition 4.9]{BBF+}, $\CH^{-1}(\tilde{E})$ is $\mu_{H}$-slope stable reflexive and 
$ \CH^{0}(\tilde{E})$ is supported in dimension no more than $1$. 
We may assume $\ch( \CH^{0}(\tilde{E}))=(0,0,\frac{s}{3}H^{2},\ch_{3}( \CH^{0}(\tilde{E})))$, where $s\geq 0$.
Then we have
$$
\ch( \CH^{-1}(\tilde{E}))=(4,-H,(-\frac{5}{6}+\frac{s}{3})H^{2},\frac{1}{6}H^{3}+\ch_{3}( \CH^{0}(\tilde{E}))).
$$
Since $\CH^{-1}(\tilde{E})$ is $\mu_{H}$-slope stable, by Lemma \ref{charact-UX}, 
we have $-\frac{5}{6}+\frac{s}{3}\leq -\frac{1}{2}$ and thus $s=0$ or $s=1$.
(i) If $s=0$, then Proposition \ref{mainsh-ch3-char} yields that $\CH^{0}(\tilde{E})=0$.
Thus $\tilde{E} \cong \CH^{-1}(\tilde{E})[1]$, as we wanted.
(ii) If $s=1$, then $\ch_{\leq 2}( \CH^{-1}(\tilde{E}))=(4,-H,-\frac{1}{2}H^{2})$.
Since $\CH^{-1}(\tilde{E})$ is reflexive, by Lemma \ref{charact-UX}, we get $\CH^{-1}(\tilde{E})\cong \mathcal{U}_{X}$.
Thus we get $\ch(\CH^{0}(\tilde{E}))=(0,0,\frac{1}{3}H^{2},-\frac{1}{3}H^{3})$.
Hence, the support of $\CH^{0}(\tilde{E})$ contains a line.
Consider the exact sequence of cohomology sheaves of \eqref{more-vb-ses},
it follows that the support of $\CH^{0}(\tilde{E})$ belongs to $C$, a contradiction.
As a consequence, 
we obtain $\tilde{E}=E_{C}[1]\cong  \CH^{-1}(\tilde{E})[1]$.
Hence,  $E_{C}=\CH^{-1}(\tilde{E})$ is a $\mu_{H}$-slope stable reflexive sheaf with $\ch(E)=\nu$.
Applying $\CRHom(-,\CO_{X})$ to \eqref{more-vb-ses}, we get 
$\CExt^{i}(E_{C},\CO_{X})=0$ for $i=1,2,3$.
Therefore, $E_{C}$ is a  $\mu_{H}$-slope stable vector bundle with $\ch(E_{C})=\nu$.
This finishes the proof of Proposition \ref{more-vector-bundle}.
\end{proof}

In order to prove Theorem \ref{sh-charact-thm}, 
basing on Corollary \ref{tor-sh-char-cor},
we give a special construction of $\mu_{H}$-slope stable vector bundles with Chern character $\nu$.

\begin{prop}\label{construt-bundle-converse}
Let $D$ be a Weil divisor on a hyperplane section $Y\in |H|$ with Chern character $\ch(\CO_{Y}(D)) = (0,H,\frac{5}{6} H^{2},-\frac{1}{6} H^{3})$. 
If  $H^{0}(Y,\CO_{Y}(D))\cong \CN^{4}$, 
then $\CO_{Y}(D)$ is globally generated and $E_{D}$ is a $\mu_{H}$-slope stable vector bundle with $\ch(E_{D})=\nu$. 
In particular, there is a (degenerate) smooth quartic rational  curve $C$ in $Y$ of class $D$ and $E_{D}\cong \mathrm{L}_{\CO_{X}}(\DD(I_{C}(H)))[-1]$. 
\end{prop}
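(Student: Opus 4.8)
The plan is to prove global generation of $\CO_Y(D)$ first, then to read off the structure of $E_D$ from the resulting presentation, and finally to identify $E_D$ with $\mathrm{L}_{\CO_X}(\DD(I_C(H)))[-1]$ by a formal manipulation of exact triangles. Throughout I would exploit the numerical consequences of the hypotheses: from $\ch_2(\CO_Y(D))=\tfrac56 H^2$ one reads off $D\cdot H=4$, and from $\chi(Y,\CO_Y(D))=4$ (Corollary \ref{tor-sh-char-cor}) together with Riemann--Roch on $Y$ one gets $D^2=2$.

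First I would observe that $h^0(Y,\CO_Y(D))=4$, $H^2(Y,\CO_Y(D))=0$ (Corollary \ref{tor-sh-char-cor}) and $\chi(Y,\CO_Y(D))=4$ force $\RHom(\CO_X,\CO_Y(D))=\CN^4[0]$, so the cone $\mathcal{E}_D$ of \eqref{(BBF+)-sh-constrct} is exactly the left mutation $\mathrm{L}_{\CO_X}(\CO_Y(D))$, with $\CH^{-1}(\mathcal{E}_D)=E_D$, $\CH^0(\mathcal{E}_D)=Q:=\coker(\CO_X^{\oplus 4}\to\CO_Y(D))$ and all other cohomology zero. By Lemma \ref{(BBF+)-sh-constrct-lem}, $E_D$ is $\mu_H$-slope stable and reflexive, and $\CO_Y(D)$ is globally generated if and only if $Q=0$. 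Since $\CO_Y(D)$ is torsion-free of rank one on $Y$ and $h^0>0$, the evaluation is generically surjective, so $Q$ is an effective torsion sheaf of dimension $\le 1$ and $\ch(E_D)=\nu+\ch(Q)$ with $\ch_{\le 1}(E_D)=(4,-H)$. Applying Lemma \ref{charact-UX} to the reflexive $\mu_H$-slope stable sheaf $E_D$ gives $H\cdot\ch_2(E_D)\le-\tfrac12 H^3$; since $H\cdot\ch_2(E_D)=-\tfrac56 H^3+H\cdot\ch_2(Q)$ and $\ch_2(Q)$ is a non-negative multiple of $\tfrac13 H^2$, only $\ch_2(Q)=0$ or $\ch_2(Q)=\tfrac13 H^2$ can occur.

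Next I would rule out both cases. If $\ch_2(Q)=0$ but $Q\neq 0$, then $H\cdot\ch_2(E_D)=-\tfrac56 H^3$ while $\ch_3(E_D)=\tfrac16 H^3+\ch_3(Q)>\tfrac16 H^3$, contradicting Proposition \ref{mainsh-ch3-char}. If $\ch_2(Q)=\tfrac13 H^2$, then $H\cdot\ch_2(E_D)=-\tfrac12 H^3$, so Lemma \ref{charact-UX} forces $E_D\cong\mathcal{U}_X$; applying $\Hom(\CO_X(H),-)$ to $0\to\mathcal{U}_X\to\CO_X^{\oplus 4}\to\mathrm{im}\to 0$ and using $\RHom(\CO_X(H),\mathcal{U}_X)=\CN[-1]$ (read off from the Euler sequence \eqref{def-of-UX}) yields $\hom(\CO_X(H),\mathrm{im})=1$; as $\mathrm{im}\subseteq\CO_Y(D)$ this forces $h^0(Y,\CO_Y(D-H))\ge 1$, hence $D-H$ is effective. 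But $(D-H)\cdot H=1$, so $D-H$ is a line $\ell\subset Y$, whence $D^2=(H+\ell)^2=3+2\cdot 1+(-1)=4$, contradicting $D^2=2$. Therefore $Q=0$: $\CO_Y(D)$ is globally generated, and by Lemma \ref{(BBF+)-sh-constrct-lem} $E_D$ is a $\mu_H$-slope stable vector bundle fitting in $0\to E_D\to\CO_X^{\oplus 4}\to\CO_Y(D)\to 0$, so $\ch(E_D)=4\ch(\CO_X)-\ch(\CO_Y(D))=\nu$.

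For the last statement, being globally generated $\CO_Y(D)$ is a line bundle defining a morphism $Y\to\PB^3$, so a general member $C\in|D|$ is an effective Cartier divisor on $Y$, smooth away from $\mathrm{Sing}(Y)$, and since $C\cdot H=4$, $C^2=2$, $p_a(C)=0$, it is a (possibly degenerate) smooth rational quartic of class $D$. To identify $E_D$, I would twist $0\to I_{Y/X}\to I_{C/X}\to I_{C/Y}\to 0$, i.e. $0\to\CO_X(-H)\to I_C\to\CO_Y(-C)\to 0$, by $H$, apply $\DD=\CRHom(-,\CO_X)[1]$, and use the Grothendieck--Verdier identity $\DD(\CO_Y(L))\cong\CO_Y(H-L)$ (valid because $\omega_Y=\CO_Y(-H)$) together with $\DD(\CO_X)=\CO_X[1]$; this produces an exact triangle $\CO_X\to\CO_Y(C)\to\DD(I_C(H))\xrightarrow{+1}$. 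Applying the exact functor $\mathrm{L}_{\CO_X}$ and using $\mathrm{L}_{\CO_X}(\CO_X)=0$ gives $\mathcal{E}_D=\mathrm{L}_{\CO_X}(\CO_Y(D))\cong\mathrm{L}_{\CO_X}(\DD(I_C(H)))$, and shifting by $[-1]$ yields $E_D\cong\mathrm{L}_{\CO_X}(\DD(I_C(H)))[-1]$, which is exactly the bundle $E_C$ of Proposition \ref{more-vector-bundle}. The hard part is the global generation step, and within it the case $\ch_2(Q)=\tfrac13 H^2$: here $E_D$ degenerates to $\mathcal{U}_X$ and one must combine the equality $D^2=2$ with the cohomology of $\mathcal{U}_X$ to reach a contradiction. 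Once global generation is established, the Chern character computation, the existence of $C$, and the mutation identity are all formal.
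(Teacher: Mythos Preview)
Your overall strategy matches the paper's: use Lemma~\ref{(BBF+)-sh-constrct-lem} together with the Chern-character bounds of Lemma~\ref{charact-UX} and Proposition~\ref{mainsh-ch3-char} to force $Q=0$, then apply Bertini and a mutation identity. The point of divergence, and the step that needs repair, is your treatment of the case $\ch_2(Q)=\tfrac13 H^2$.

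Your argument there rests on two intersection-theoretic identities on $Y$: that Riemann--Roch gives $D^2=2$, and that a line $\ell\subset Y$ has $\ell^2=-1$. Both hold when $Y$ is smooth, but the proposition allows $Y$ to be an arbitrary hyperplane section, and this generality is used in the proof of Theorem~\ref{sh-charact-thm}. For singular $Y$ the divisors $D$ and $\ell$ are only Weil; Riemann--Roch for $\CO_Y(D)$ acquires correction terms at the singular points, and a line through a singularity need not have self-intersection $-1$. So the contradiction $4=(H+\ell)^2\neq 2=D^2$ is not established. The paper avoids intersection theory on $Y$ entirely: from $E_{D,V}\cong\mathcal{U}_X$ it identifies the image of $\CO_X^{\oplus 4}\to\CO_Y(D)$ with $\CO_Y(H)$ and the cokernel with $\CO_\ell(-H)$ for some line $\ell$ (arguing as in Proposition~\ref{may-not-reflexive}), then uses $\Ext^1(\CO_\ell(-H),\CO_Y(H))=0$ to split $\CO_Y(D)\cong\CO_Y(H)\oplus\CO_\ell(-H)$, which contradicts purity of $\CO_Y(D)$ on $Y$. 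This works uniformly in $Y$.

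A smaller issue: once global generation holds you assert that $\CO_Y(D)$ ``is a line bundle defining a morphism $Y\to\PB^3$,'' but for singular $Y$ a globally generated reflexive rank-one sheaf need not be invertible, and your Bertini/adjunction step (as well as the identity $\DD(\CO_Y(L))\cong\CO_Y(H-L)$) assumes $D$ is Cartier. The paper handles this by deferring to the argument of \cite[Theorem~6.1]{BBF+}. Your final triangle manipulation identifying $E_D\cong\mathrm{L}_{\CO_X}(\DD(I_C(H)))[-1]$ is correct and equivalent to the paper's.
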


\begin{proof}
We set $V:=H^{0}(Y,\CO_{Y}(D))=\CN^{4}$.
Then, as in Section \ref{non-emptiness-Modsp}, 
 Lemma \ref{(BBF+)-sh-constrct-lem} yields that
$E_{D, V}$ is a $\mu_{H}$-slope stable reflexive sheaf.
We may assume $\ch(\CH^{0}(\mathcal{E}_{D,V})):= (0,0,\frac{s}{3}H^{2}, \ch_{3}(\CH^{0}(\mathcal{E}_{D,V}))$, 
where $s \geq 0$.
Then, we have
$$
\ch(E_{D,V})=(4,-H, (\frac{s}{3}-\frac{5}{6})H^{2},\ch_{3}(\CH^{0}(\mathcal{E}_{D,V}))+\frac{1}{6}H^{3}).
$$
By by Lemma \ref{charact-UX}, 
we obtain $\frac{s}{3}-\frac{5}{6} \leq -\frac{1}{2}$.
Thus we get $s=0$ or $s=1$.

{\bf Case (i):}  
If $s=1$, then $\ch(E_{D,V})=(4,-H,-\frac{1}{2}H^{2},\ch_{3}(\CH^{0}(\mathcal{E}_{D,V}))+\frac{1}{6}H^{3})$ and $\CH^{0}(E_{D,V})$ is supported in a line $\ell\subset X$.
Since $E_{D,V}$ is reflexive, by Lemma \ref{charact-UX}, we get $E_{D,V}\cong \mathcal{U}_{X}$.
Then we have
$\ch(\CH^{0}(\mathcal{E}_{D,V}))=(0,0,\frac{1}{3}H^{2},-\frac{1}{3}H^{3})$.
Then the exact sequence \eqref{(BBF+)-sh-constrct} gives 
\begin{equation}\label{case-i}
\xymatrix@C=0.5cm{
0 \ar[r] & \mathcal{U}_{X} \ar[r] & \CO_{X}^{\oplus 4} \ar[r] & \CO_{Y}(D) \ar[r] & \CH^{0}(\mathcal{E}_{D,V}) \ar[r] & 0.
}
\end{equation}
We split the exact sequence \eqref{case-i} into two short exact sequences
$$
\xymatrix@C=0.5cm{
0 \ar[r] & \mathcal{U}_{X} \ar[r] & \CO_{X}^{\oplus 4} \ar[r] & Q \ar[r] & 0 
}
\,
\textrm{ and }
\,
\xymatrix@C=0.5cm{
0 \ar[r] & Q  \ar[r]  & \CO_{Y}(D) \ar[r] & \CH^{0}(\mathcal{E}_{D,V}) \ar[r] & 0,
}
$$
where $Q$ is a torsion sheaf with $\ch(Q)=(0,H,\frac{1}{2}H^{2},\frac{1}{6}H^{3})$.
Then $Q$ is a pure sheaf supported on a hyperplane section $Y\in |H|$.
Thus we get $Q\cong \CO_{Y}(H)$.

Since $\Hom(\CO_{X},\CO_{Y}(D))=\CN^{4}$ and $\Ext^{i}(\CO_{X},\mathcal{U}_{X})=0$ for all $i$, 
the above two exact sequences implies that $\Hom(\CO_{X},\CH^{0}(\mathcal{E}_{D,V}))=0$.
Since $\chi(\CH^{0}(\mathcal{E}_{D,V})(H))=1$, so $\Hom(\CO_{X},\CH^{0}(\mathcal{E}_{D,V})(H))\neq 0$.
Then by the same argument as  Case (2) in the proof of Proposition \ref{may-not-reflexive}, 
we have $\CH^{0}(\mathcal{E}_{D,V})\cong \CO_{\ell}(-H)$ for some line $\ell\subset Y$.
Note that $\Ext^{1}(\CO_{\ell}(-H),\CO_{Y}(H))=0$.
Thus, $\CO_{Y}(D)=\CO_{\ell}(-H)\oplus \CO_{Y}(H)$, a contradiction.

{\bf Case (ii):} If $s=0$, then $\ch(E_{D,V})=(4,-H, -\frac{5}{6}H^{2},\ch_{3}(\CH^{0}(\mathcal{E}_{D,V}))+\frac{1}{6}H^{3})$. 
We may assume $\ch(\CH^{0}(\mathcal{E}_{D,V}))=(0,0,0, \frac{t}{3}H^{3})$, where $t\geq 0$.
Applying  Proposition  \ref{mainsh-ch3-char} to $E_{D,V}$, 
we get
$$
\ch_{3}(\CH^{0}(\mathcal{E}_{D,V}))+\frac{1}{6}H^{3}=\frac{t}{3}H^{3}+\frac{1}{6}H^{3} \leq \frac{1}{6}H^{3}.
$$
This yields $t=0$. 
Thus $\ch(E_{D,V})=(4,-H,-\frac{5}{6}H^{2}, \frac{1}{6}H^{3})=\nu$ and $\CH^{0}(\mathcal{E}_{D,V})=0$. 
As a result, $\CO_{Y}(D)$ is globally generated by $V$. 
 
In particular, 
following the proof of \cite[Theorem 6.1]{BBF+}, 
the Bertini's theorem yields that a general section of $\CO_{Y}(D)$ cuts out a smooth curve $C$.
Thanks to the adjunction formula, 
the Chern character of the canonical sheaf $\omega_{C}$ of $C$ is
\begin{eqnarray*}
\ch(\omega_{C}) 
& = & \ch(\CO_{Y}(-H+D)|_{D})= \ch(\CO_{Y}(-H+D))-\ch(\CO_{Y}(-H)) \\
& = &  (0,H,-\frac{1}{6}H^{2},-\frac{1}{2}H^{3})-(0,H,-\frac{3}{2}H^{2},\frac{7}{6}H^{3}) \\
& = &  (0,0,\frac{4}{3}H^{2},-\frac{5}{3}H^{3}).
\end{eqnarray*}
From this, it follows that the smooth curve $C$ is of degree $4$.
Furthermore, the Hirzebruch--Riemann--Roch theorem implies $\chi(\omega_{C})=-1$. 
This means that $C$ is a smooth curve of degree $4$ and genus $0$. 
Since $\RHom(\CO_{X},\CO_{Y}(D))=\CN^{4}$, 
by the definition of left mutation, 
we get 
$$
E_{D}\cong \mathrm{L}_{\CO_{X}}(\CO_{Y}(D))[-1]= \mathrm{L}_{\CO_{X}}(\CO_{Y}(C))[-1].
$$

Finally, since $C$ is degenerate, so there is a short exact sequence
$$
\xymatrix@=0.5cm{
0 \ar[r] & \CO_{X} \ar[r] & I_{C}(H) \ar[r] & I_{C/Y}(H) \ar[r] & 0.
}
$$
Then, applying $\CRHom(-,\CO_{X})[1]$, we get an exact triangle
$$
\xymatrix@=0.5cm{
\CO_{X} \ar[r] & \mathbb{D}(I_{C/Y}(H)) \ar[r] & \mathbb{D}(I_{C}(H)) \ar[r] & \CO_{X}[1].
}
$$
Thus $\mathrm{L}_{\CO_{X}}(\DD(I_{C}(H))) \cong \mathrm{L}_{\CO_{X}}(\DD(I_{C/Y}(H)))$.
Applying $\CRHom(-,\CO_{X}(-H))[1]$ to the short exact sequence
$$
\xymatrix@=0.5cm{
0 \ar[r] & I_{C/Y} \ar[r] & \CO_{Y} \ar[r] & \CO_{C} \ar[r] & 0,
}
$$
we obtain an exact triangle
$$
\xymatrix@=0.5cm{
\CO_{C}(2)[-1] \ar[r] & \CO_{Y} \ar[r] & \mathbb{D}(I_{C/Y}(H)) \ar[r] & \CO_{C}(2).
}
$$
This yields that $\mathrm{L}_{\CO_{X}}(\DD(I_{C}(H)))\cong \mathrm{L}_{\CO_{X}}(\CO_{Y}(C))=E_{D}[1]$, as $C$ lies in $Y$.
This complete the proof of Proposition \ref{construt-bundle-converse}.
\end{proof}

\begin{rem}
Suppose that $C\subset X$ is a smooth twisted cubic. 
Following the proof Proposition \ref{more-vector-bundle}, we can also deduce that
the object $E_{C}:=\mathrm{L}_{\CO_{X}} (\DD(I_{C}(H)))[-1]\in \Ku(X)$ is a $\mu_{H}$-slope stable vector bundle of Chern character $\ch(E_{C})=\ch(I_{\ell})+\ch(S(I_{\ell}))=(3,-H,-\frac{1}{2}H^{2},\frac{1}{6}H^{3})$.
As a matter of fact, this construction gives all stable vector bundles with Chern character $\ch(I_{\ell})+\ch(S(I_{\ell}))$ (see \cite[Theorem 6.1]{BBF+}).
\end{rem}

We are now ready to finish the proof of Theorem \ref{sh-charact-thm}.

\begin{proof}[Proof of Theorem \ref{sh-charact-thm}]
The ``if part" follows immediately from Proposition \ref{may-not-reflexive} and Proposition \ref{more-vector-bundle}.
Conversely, by Proposition \ref{may-not-reflexive}, 
we may assume that $E$ is a $\mu_{H}$-slope stable reflexive sheaf.
We set $\tilde{E}:=E[1]$.
Then, by \cite[Proposition 4.18]{BBF+}, 
$\tilde{E}$ is $\sigma_{\alpha,\beta}$-stable for $\alpha\gg0$ and $\beta\geq -\frac{1}{4}$.
Since $\ch_{1}(\tilde{E})=H$, so $\tilde{E}$ has no walls along $\beta=0$. 
Thanks to the semicircular $W(\frac{5}{6},\frac{5}{6})$ tangent to the $\alpha$-axis, 
it follows that $W(\frac{5}{6},\frac{5}{6})$ is the greatest wall for $\tilde{E}$.
Note that the numerical wall $W(\tilde{E},\CO_{X}(2H))$ is non-empty and above $W(\frac{5}{6},\frac{5}{6})$.
Since $\CO_{X}(2H)$ and $\tilde{E}$ are tilt stable along $W(\tilde{E},\CO_{X}(2H))$,
by Serre duality, we have
$
\Hom(\tilde{E},\CO_{X}[3]) \cong \Hom(\CO_{X}(2H),\tilde{E})=0.
$
Then, the Hirzebruch--Riemann--Roch theorem yields
$$
\hom(\tilde{E},\CO_{X}[1])=\hom(\tilde{E},\CO_{X})+\hom(\tilde{E},\CO_{X}[2])-\chi(\tilde{E},\CO_{X}) \geq -\chi(\tilde{E},\CO_{X})=3.
$$
Along $W(\frac{5}{6},\frac{5}{6})$, $\tilde{E}$ and $\CO_{X}[1]$ have the same tilt slope.
Since $\CO_{X}[1]$ is tilt stable,
thus any non-trivial morphism $\tilde{E}\rightarrow \CO_{X}[1]$ is surjective and destabilizes $\tilde{E}$ below $W(\frac{5}{6},\frac{5}{6})$. 
Namely, there are no tilt semistable objects below $W(\frac{5}{6},\frac{5}{6})$ with Chern character $-\nu$.

Now we let $r:=\hom(\tilde{E},\CO_{X}[1])\geq 3$.
Then we have the following short exact sequence of tilt semistable objects along $W(\frac{5}{6},\frac{5}{6})$
\begin{equation}\label{Construct-main-sh-SSE}
\xymatrix@C=0.5cm{
0 \ar[r] & G \ar[r] & \tilde{E} \ar[r] &  \CO_{X}^{\oplus r}[1] \ar[r] & 0.
}
\end{equation}
If $r\geq 5$, then we have
$
(\frac{5}{6})^{2}> \frac{\frac{23}{3}}{4r(r-4)}.
$
This is a contradiction with \cite[Proposition 4.16]{BBF+}.
Hence, we get $r=3$ or $r=4$.
In the following, we will discuss these two cases in details.
 
{\bf Case 1.} 
If $r=4$, then $\ch(G)=(0,H,\frac{5}{6}H^{2},-\frac{1}{6}H^{3})$. 
Since $E\in \Ku(X)$, applying $\Hom(\CO_{X},-)$ to \eqref{Construct-main-sh-SSE}, 
we get  $\Hom(\CO_{X},G)=\CN^{4}$.
Suppose that $G$ is not of the form $\CO_{Y}(D)$ for a Weil divisor $D$ on $Y\in |H|$.
Since $G$ is tilt semistable along $W(\frac{5}{6},\frac{5}{6})$, 
by Lemma \ref{unique-wall-for-tor-sh} and Theorem \ref{Classification-torison-sh}, 
we have $\CH^{-1}(G)=\CO_{X}$.
Since $\Hom(\CO_{X},E)=0$, this contradicts with \eqref{Construct-main-sh-SSE}. 
As a consequence, it follows from Proposition \ref{construt-bundle-converse} that
there is a degenerate smooth quartic rational  curve $C$ of class $D$ such that $E\cong \mathrm{L}_{\CO_{X}}(\DD(I_{C}(H)))[-1]$.

{\bf Case 2.} 
If $r=3$, then $\Hom(E,\CO_{X}[1])=0$ and $\ch(G)=(-1,H,\frac{5}{6}H^{2},-\frac{1}{6}H^{3})$. 
Since $G$ has no walls along $\beta=0$, thus $G$ is $\sigma_{\alpha,0}$-semistable for $\alpha>0$.
By \cite[Proposition 5.1.3]{BMT14}, there exists an exact triangle 
\begin{equation}\label{dual-ss-stable-thm1.2}
\xymatrix@C=0.5cm{
\tilde{G} \ar[r] & \DD(G) \ar[r] &T[-1] \ar[r] &  \tilde{G}[1],
}
\end{equation}
where $\tilde{G} $ is $\sigma_{\alpha,0}$-semistable and $T$ is a torsion sheaf supported in dimension zero.
This yields $\CH^{-1}(\DD(G))\cong \CH^{-1}(\tilde{G})$, $\CH^{0}(\DD(G))\cong \CH^{0}(\tilde{G})$ and $\CH^{1}(G)\cong T$, and $\CH^{i}(\DD(G))=0$ for $i\neq -1,0,1$.
Since $\ch_{1}(\tilde{G})=\ch_{1}(\DD(G))=H$, 
thus $\tilde{G}$ has no wall along $\beta=0$.
Thanks to Proposition \ref{limit-tilt-stab-Gie-stab-prop}, $\tilde{G}$ is $\mu_{H}$-slope stable sheaf.

Note that $\mu_{H}(G)=-1<0$.
According to \cite[Proposition 4.9]{BBF+},
$\CH^{-1}(G)$ is a $\mu_{H}$-slope semistable reflexive sheaf and $\ch_{1}(\CH^{-1}(G))=-H$.
Hence, we get $\CH^{-1}(G)\cong \CO_{X}(-H)$ and thus $\ch(\CH^{0}(G))=(0,0,\frac{4}{3}H^{2},-\frac{1}{3}H^{3})$.
Then there exists a short exact sequence
\begin{equation}\label{case2-thm1.2-ses1}
\xymatrix@C=0.5cm{
0 \ar[r] & \CO_{X}(-H)[1] \ar[r]^{} & G \ar[r] & \CH^{0}(G) \ar[r] & 0.
}
\end{equation}
Applying $\CRHom(-,\CO_{X})$ to \eqref{case2-thm1.2-ses1}, we get 
$
\CExt^{3}(\CH^{0}(G),\CO_{X})\cong \CH^{2}(\DD(G))=0.
$
 
Set $\ch_{3}(T):=\frac{k}{3}H^{3}$, where $k\geq 0$.
Then $\ch(\tilde{G})=(1,H,-\frac{5}{6}H^{2},(\frac{k}{3}-\frac{1}{6})H^{3})$.
Since $\hom(E,\CO_{X})=3$, thus \eqref{Construct-main-sh-SSE} implies $\Hom(G,\CO_{X}[1])=0$.
Applying $\Hom(\CO_{X},-)$ to \eqref{dual-ss-stable-thm1.2}, we have
$$
\Hom(\CO_{X},\tilde{G})\cong \Hom(\CO_{X},\DD(G))\cong \Hom(G,\CO_{X}[1])=0.
$$
Since $\mu_{H}(\tilde{G})=1>-2$, by $\mu_{H}$-slope stability and Serre duality, we have
$$
\Ext^{3}(\CO_{X},\tilde{G})\cong \Hom(\tilde{G},\CO_{X}(-2H))=0.
$$
Note that both $\tilde{G}$ and $\CO_{X}(-2H)[1]$ are $\sigma_{\alpha,0}$-stable.
Since the tilt slopes 
$$
\lim_{\alpha\to 0} \mu_{\alpha,0}(\tilde{G})=-\frac{5}{3}>-2 =\lim_{\alpha\to 0} \mu_{\alpha,0}(\CO_{X}(-2H)[1]),
$$
by Serre duality, we get 
$$
\Ext^{2}(\CO_{X},\tilde{G})\cong \Hom(\tilde{G},\CO_{X}(-2H)[1])=0.
$$
Then, by the Hirzebruch--Riemann--Roch theorem, we have
$$
\chi(\tilde{G})=k=-\hom(\CO_{X},\tilde{G}[1])\leq 0
$$
and thus $k=0$.

As a result, we get $T=0$ and thus $\tilde{G}\cong \DD(G)$.
Then the Chern character $\ch(\tilde{G})=(1,H,-\frac{5}{6}H^{2},-\frac{1}{6}H^{3})$.
Twisted by $\CO_{X}(-H)$, 
we have $\ch(\tilde{G}(-H))=(1,0,-\frac{4}{3}H^{2},H^{3})$.
Since $\tilde{G}$ is a $\mu_{H}$-slope stable sheaf, 
so $\tilde{G}(-H)\cong I_{C}$ is the ideal sheaf of a $1$-dimensional closed subscheme $C\subset X$ and $\ch(\CO_{C})=(0,0,\frac{4}{3}H^{2},-H^{3})$.
As a consequence, we get $\DD(G)\cong I_{C}(H)$ and thus $G\cong \DD(I_{C}(H))$.
Then we get $\CH^{0}(G) \cong  \CExt^{1}(I_{C}(H),\CO_{X})$.
Applying $\CHom(-,\CO_{X})$ to the short exact sequence
$$
\xymatrix@C=0.3cm{
0 \ar[r] & I_{C}(H) \ar[r]^{} & \CO_{X}(H) \ar[r] & \CO_{C}(H) \ar[r] & 0,
}
$$
we obtain $\CExt^{i}(\CO_{C}(H),\CO_{X}) =0$ for $i\neq 2$ and 
$$
\CExt^{2}(\CO_{C}(H),\CO_{X})\cong  \CExt^{1}(I_{C}(H),\CO_{X})\cong \CH^{0}(G).
$$
It follows that $C$ is a pure-dimensional reduced curve.
Thus $C$ has at most four irreducible (reduced) components.
Moreover, considering the exact sequence of cohomology sheaves of \eqref{Construct-main-sh-SSE},
we have an exact sequence 
\begin{equation}\label{cohomsh-ses-thm1.2}
\xymatrix@C=0.3cm{
0 \ar[r]^{} & \CO_{X}(-H) \ar[r]^{} & E \ar[r]^{} & \CO_{X}^{\oplus 3} \ar[r]^{} & \CExt^{2}(\CO_{C}(H),\CO_{X}) \ar[r]^{} & 0.}
\end{equation}
Note that $\CExt^{2}(\CO_{\ell}(H),\CO_{X})=\CO_{\ell}(-H)$ for any line $\ell\subset X$.
It follows from \eqref{cohomsh-ses-thm1.2} that $C$ contains no lines as irreducible components. 
As a result, if $C$ is not irreducible, 
then $C$ has two irreducible components which both are irreducible reduced curves $C_{1}$  and $C_{2}$ of degree  $2$. 
Then $C_{1}$  and $C_{2}$ are smooth conics (cf. \cite[Lemma 2.1.1]{KPS18}).
This is a contradiction with the Chern character of $\CO_{C}$, 
as $\ch(\CO_{C_{i}})=(0,0,\frac{2}{3}H^{2},-\frac{1}{3}H^{3})$.
Therefore, $C$ is an irreducible reduced curve.
Since $\chi(\CO_{C})=1$, so the arithmetic genus $g(C)=0$  and thus $C$ is smooth quartic rational  curve.

Finally, 
applying $\CRHom(-,\CO_{X})$ to \eqref{Construct-main-sh-SSE}, 
we get $\CExt^{i}(E,\CO_{X})=0$ for $i\neq 1,2,3$ and a short exact sequence
\begin{equation}\label{more-vb-ses-thm1.2}
\xymatrix@C=0.5cm{
0 \ar[r] & \CO_{X}^{\oplus 3} \ar[r]^{} & E^{\vee} \ar[r] &  I_{C}(H)  \ar[r] & 0.}
\end{equation}
Hence, $E$ is a vector bundle. 
Note that $E\in \Ku(X)$.
Applying $\Hom(\CO_{X},-)$ to \eqref{more-vb-ses-thm1.2},
we obtain $\Hom(\CO_{X},I_{C}(H))=0$.
This implies that $C$ is a non-degenerate smooth quartic rational  curve.
Consequently, by the short exact sequence \eqref{Construct-main-sh-SSE}, 
we obtain an exact triangle
$$
\xymatrix@C=0.5cm{
\CO_{X}^{\oplus 3}  \ar[r] & \DD(I_{C}(H)) \ar[r] & E[1] \ar[r] &  \CO_{X}^{\oplus 3}[1].
}
$$
Namely, we have $E=\mathrm{L}_{\CO_{X}}(\DD(I_{C}(H)))[-1]$, where $C$ is a non-degenerate smooth quartic rational  curve.
This completes the proof of Theorem \ref{sh-charact-thm}.
 \end{proof}



\begin{thebibliography}{100}

\bibitem{APR22}
M.\ Altavilla, M.\ Petkovi\v{c}, F.\ Rota,
{\it Moduli spaces on the Kuznetsov component of Fano threefolds of index $2$}, 
\'{E}pijournal\  G\'{e}om.\ Alg\'{e}br. \ {\bf 6} (2022), Article No. 13.

\bibitem{BBF+}
A.\ Bayer, S.\ Beentjes, S.\ Feyzbakhsh, G.\ Hein, D.\ Martinelli, F.\ Rezaee, B.\ Schmidt,
{\it The desingularization of the theta divisor of a cubic threefold as a moduli space},
Geom.\ Topol.\ {\bf 28} (2024), 127--160.

\bibitem{BLM+}
A.\ Bayer, M.\ Lahoz, E.\ Macr{\`{\i}}, H.\ Nuer, A.\ Perry, P.\ Stellari, 
{\it Stability conditions in families}, 
Publ.\ Math.\ Inst.\ Hautes \'{E}tudes Sci.\  {\bf 133}  (2021), 157--325.

\bibitem{BLMS23}
A.\ Bayer, M.\ Lahoz, E.\ Macr{\`{\i}}, P.\ Stellari, 
{\it Stability conditions on Kuznetsov components}, (Appendix joint with X.\ Zhao), 
Ann.\ Sci.\ \'{E}c.\ Norm.\ Sup\'{e}r.\ {\bf 56} (2023), 517--570.

\bibitem{BM23}
A.\ Bayer, E.\ Macr{\`{\i}}, 
{\it The unreasonable effectiveness of wall-crossing in algebraic geometry},
Proc. Int. Cong. Math. 2022, Vol. {\bf 3}, 2172--2195, EMS Press, Berlin, 2023.

\bibitem{BMS16}
A.\ Bayer, E.\ Macr{\`{\i}}, P.\ Stellari, 
{\it The space of stability conditions on abelian threefolds, and on some Calabi--Yau threefolds}, 
Invent.\ Math.\ {\bf 206} (2016), 869--933.

\bibitem{BMT14}
A.\ Bayer, E.\ Macr{\`{\i}}, Y.\ Toda, 
{\it Bridgeland stability conditions on threefolds I: Bogomolov--Gieseker type inequalities}, 
J.\ Algebraic\ Geom.\  {\bf 23} (2014), 117--163.

\bibitem{BMMS12}
M.\ Bernardara, E.\ Macr{\`{\i}}, S.\ Mehrotra, P.\ Stellari, 
{\it A categorical invariant for cubic threefolds}, 
Adv.\ Math.\ {\bf 229} (2012), 770--803.

\bibitem{BMSZ17}
M.\ Bernardara,  E.\ Macr{\`{\i}}, B.\ Schmidt, X.\ Zhao, 
{\it Bridgeland stability conditions on Fano threefolds}
\'{E}pijournal\ G\'{e}om.\ Alg\'{e}br.  {\bf 1} (2017), Article Nr. 2.

\bibitem{BL15}
J.\ Blanc, S.\ Lamy
{\it On birational maps from cubic threefolds},
North-Western\ Euro.\ J.\ Math.\ {\bf 1} (2015), 55--84.

\bibitem{Bri07}
T.\ Bridgeland,
{\it Stability conditions on triangulated categories}, 
Ann.\ of\ Math.\ {\bf 166} (2007), 317--345.

\bibitem{Bri08}
T.\ Bridgeland,
{\it Stability conditions on K3 surfaces}, 
Duke\ Math.\ J.\ {\bf 141} (2008), 241--291.

\bibitem{CHL+24}
S.\ Casalaina-Martin, X.\ Hu, X.\ Lin, S.\ Zhang, Z.\ Zhang,
{\it Three approaches to a categorical Torelli theorem for cubic threefolds of non-Eckardt type via the equivariant Kuznetsov components},
arXiv:2405.20554.

\bibitem{CHN23}
I.\ Coskun, J.\ Huizenga, H.\ Nuer,
{\it The Brill--Noether theory of the moduli spaces of sheaves on surfaces},
arXiv:2306.11033. 

\bibitem{FLZ23}
S.\ Feyzbakhsh, Z.\ Liu, S.\ Zhang,
{\it New perspectives on categorical Torelli theorems for del Pezzo threefolds},
to appear in J. Math. Pures  Appl.,
arXiv:2304.01321.


\bibitem{FGLZ24}
S.\ Feyzbakhsh, H.\ Guo, Z.\ Liu, S.\ Zhang
{\it Lagrangian families of Bridgeland moduli spaces from Gushel-Mukai fourfolds and double EPW cubes},
arXiv:2404.11598. 

\bibitem{FP23}
S.\ Feyzbakhsh, L.\ Pertusi, 
{\it Serre-invariant stability conditions and Ulrich bundles on cubic threefolds}, 
\'{E}pijournal\ G\'{e}om.\ Alg\'{e}br. \ {\bf 7} (2023), Article No. 1.


\bibitem{HRS96}
D.\ Happel, I.\ Reiten,  S.\ Smal\o, 
{\it Tilting in abelian categories and quasitilted algebras},
Mem.\  Amer.\  Math.\ Soc.\ {\bf 120} (1996), no. 575.


\bibitem{HRS05}
J.\ Harris, M.\ Roth, J.\ Starr,
{\it Curves of small degree on cubic threefolds},
Rocky Mountain J. Math. {\bf 35} (2005), 761--817. 


\bibitem{Huy06}
D.\ Huybrechts, 
{\it Fourier--Mukai transforms in algebraic geometry},
Oxford Mathematical Monographs, The Clarendon Press
Oxford University Press, Oxford, (2006).

\bibitem{HL10}
D.\ Huybrechts, M.\ Lehn,
{\it The geometry of moduli spaces of sheaves},
Cambridge University Press,  second edition, Cambridge, (2010).

\bibitem{JLLZ24}
A.\ Jacovskis, X.\ Lin, Z.\ Liu,  S.\ Zhang,
{\it Categorical Torelli theorems for Gushel--Mukai threefolds},
J. London Math. Soc. {\bf 109} (2024), e12878.

\bibitem{Kuz04}
A.\ Kuznetsov, 
{\it Derived category of a cubic threefold and the variety $V_{14}$}, 
Proc. Steklov Inst. Math. {\bf 246} (2004), 171--194.


\bibitem{KPS18}
A.\ Kuznetsov, Yu.\ Prokhorov, C.\ Shramov,
{\it Hilbert schemes of lines and conics and automorphism groups of Fano threefolds}, \ Japan.\ J.\ Math.\ {\bf 13} (2018), 109--185.

\bibitem{LMS15}
M.\  Lahoz, E.\  Macr{\`{\i}}, P.\ Stellari, 
{\it Arithmetically Cohen--Macaulay bundles on cubic threefolds}, 
Algebr. Geom. {\bf 2} (2015), 231--269.

\bibitem{Li19}
C.\ Li,
{\it Stability conditions on Fano threefolds of Picard number $1$},
J.\ Eur.\ Math.\ Soc.\  {\bf 21} (2019), 709--726.

\bibitem{LLPZ24}
C.\  Li, Y.\ Lin, L.\ Pertusi, X.\ Zhao,
{\it Higher dimensional moduli spaces on Kuznetsov components of Fano threefolds},
arXiv:2406.09124.

\bibitem{LZ22}
Z.\ Liu, S.\ Zhang,
{\it A note on Bridgeland moduli spaces and moduli spaces of sheaves on $X_{14}$ and $Y_{3}$},
Math.\ Z.\  {\bf 302} (2022), 803--837.

\bibitem{Mac14a}
A.\ Maciocia, 
{\it Computing the walls associated to Bridgeland stability conditions on projective surfaces}, 
Asian\  J.\ Math.\ {\bf 18} (2014), 263--279.

\bibitem{PS23}
L.\ Pertusi, P.\ Stellari,
{\it Categorical Torelli theorems: results and open problems},
Rend. Circ. Mat. Palermo (2), {\bf 72} (2023), 2949--3011.

\bibitem{PY22}
 L.\ Pertusi, S.\ Yang,
{\it Some remarks on Fano threefolds of index two and stability conditions}, 
Int.\ Math.\ Res.\ Not.\ {\bf 2022} (2022), 12940--12983.
 
\bibitem{Qin23}
X.\ Qin,
{\it Bridgeland stability of minimal instanton bundles on Fano threefolds},
J.\ Math.\ Soc.\ Japan \ {\bf 75} (2023), 1261--1285.

 \bibitem{Sch20a}
B.\ Schmidt,
{\it Bridgeland stability on threefolds: some wall crossings},
J.\ Algebraic\ Geom.\ {\bf 29} (2020), 247--283.

 \bibitem{Sch23}
B.\ Schmidt,
{\it Sheaves of low rank in three-dimensional projective space},
Euro. \ J. \ Math.\  {\bf 9} (2023), 103.

\bibitem{VP21}
D.\ Villalobos-Paz, 
{\it Moishezon spaces and projectivity criteria}, 
arXiv:2105.14630.

\bibitem{Zha20}
S.\ Zhang,
{\it Bridgeland moduli spaces for Gushel--Mukai threefolds and Kuznetsov's Fano threefold conjecture},
arXiv:2012.12193.

\end{thebibliography}
\end{document}